\newcommand{\argmin}{\arg\!\min}
\newcommand{\argmax}{\arg\!\max}
\newcommand{\eqnum}{\refstepcounter{equation}\textup{\tagform@{\theequation}}}
\newcommand\abs[1]{\left|#1\right|}
\def\namedlabel#1#2{\begingroup
    #2%
    \def\@currentlabel{#2}%
    \phantomsection\label{#1}\endgroup
}
\DeclareMathOperator{\Argmin}{Argmin}
\DeclareMathOperator{\dist}{dist}
\DeclareMathOperator{\into}{int}
\DeclareMathOperator{\dom}{dom}
\DeclareMathOperator{\prox}{Prox}
\newcommand{\floor}[1]{\left\lfloor #1 \right\rfloor}
\newcommand{\ceil}[1]{\left\lceil #1 \right\rceil}
\newcommand{\Hs}{{H}}
\newcommand{\Ws}{{W}}
\newcommand{\R}{{\mathbb{R}}}
\newcommand{\N}{{\mathbb{N}}}
\newcommand{\cost}{{\Psi}} 			
\newcommand{\costr}{{\mathcal{F}}} 	
\newcommand{\costp}{{\mathcal{R}}} 	
\newcommand{\costbar}{{\bar{\Psi}}} 
\newcommand{\costrD}{{ \nabla \costr}} 	
\newcommand{\costrDD}{{\costr''}} 	
\newcommand{\LipcostrD}{{L_{\costr'}}} 
\newcommand{\crit}{{S_{*}}} 
\newcommand{\domF}{{\into(\dom \costr)}}
\newcommand{\pgrad}{{\mathcal{T}}} 	
\newcommand{\gmap}{{\mathcal{G}}}	
\newcommand{\ssize}{{\alpha}} 						
\newcommand{\ssizek}{\alpha_k} 					
\newcommand{\ssizeinit}{{\alpha_{\mathrm{int},k}}}	
\newcommand{\ssizelb}{\ssize_{\inf}}				
\newcommand{\ssizeub}{\ssize_{\sup}}				
\newcommand{\ssizebar}{\overline{\ssize}}			
\newcommand{\ssizeBB}{\alpha^{\mathrm{BB}}} 		
\newcommand{\ssizeBBoneOld}{\alpha^{\mathrm{BB}1\mathrm{a}}} 		
\newcommand{\ssizeBBoneNew}{\alpha^{\mathrm{BB}1\mathrm{b}}} 		
\newcommand{\ssizeBBtwoOld}{\alpha^{\mathrm{BB}2\mathrm{a}}}		
\newcommand{\ssizeBBtwoNew}{\alpha^{\mathrm{BB}2\mathrm{b}}} 		
\newcommand{\ssizeABBOld}{\alpha^{\mathrm{ABBa}}}	
\newcommand{\ssizeABBNew}{\alpha^{\mathrm{ABBb}}}	
\newcommand{\tol}{\varepsilon_{\mathrm{tol}}} 	
\newcommand{\mmax}{{m_{\mathrm{max}}}}
\newcommand{\ubar}{{\bar{u}}}
\newcommand{\maxfeval}{k_{\max}^f}
\newcommand{\maxgeval}{k_{\max}^g}
\newcommand{\maxfparam}{\gamma_{\mathrm{comp}}^f}
\newcommand{\maxgparam}{\gamma_{\mathrm{comp}}^g}
\newcommand{\decparam}{\gamma_{\mathrm{decr}}}
\newcommand{\quadset}{\Gamma}
\newcommand{\quadconst}{\gamma_{\cost, \quadset}}
\newcommand{\yd}{{y_d}}
\newcommand{\ulb}{{u_a}}
\newcommand{\uub}{{u_b}}
\newcommand{\PDEcon}{{\mathcal{E}}}
\newcommand{\costyu}{{\mathcal{J}}}
\newcommand{\rom}[1]{(\romannumeral #1)}
\begin{document}

\title{On the nonmonotone linesearch for a class of infinite-dimensional
nonsmooth problems}

\titlerunning{On the nonmonotone linesearch for a class of infinite-dimensional
nonsmooth problems}

\author{Behzad Azmi  \and  Marco Bernreuther}

\institute{Behzad Azmi,  Corresponding author \at
             University of Konstanz, Department of Mathematics and Statistics, Chair for Numerical Optimization, Universitätsstraße 10, 78457 Konstanz, Germany  \\
              behzad.azmi@uni-konstanz.de
           \and
            Marco Bernreuther   \at
              University of Konstanz, Department of Mathematics and Statistics, Chair for Numerical Optimization, Universitätsstraße 10, 78457 Konstanz, Germany  \\
              marco.bernreuther@uni-konstanz.de
}

\date{Received: date / Accepted: date}

\maketitle

\begin{abstract}
This paper provides a comprehensive study of  the nonmonotone forward-backward splitting (FBS)  method for solving a class of nonsmooth composite problems in Hilbert spaces.   The objective function is the sum of a Fréchet differentiable (not necessarily convex) function and a proper lower semicontinuous convex (not necessarily smooth) function.  These problems appear, for example,  frequently in the context of optimal control of nonlinear partial differential equations (PDEs) with nonsmooth sparsity promoting cost functionals.  We discuss the convergence and complexity of FBS equipped with the nonmonotone linesearch under different conditions.  In particular,  R-linear convergence will be derived under quadratic growth-type conditions.  We also investigate the applicability of the algorithm to problems governed by PDEs.  Numerical experiments are also given that justify our theoretical findings. 
 \end{abstract}
\keywords{nonsmooth nonnconvex optimization \and forward–backward algorithm \and  infinite-dimensional problems
\and  nonmonotone linesearch   \and  quadratic growth  \and pde-constrained optimization }
\subclass{ 90C26 \and 49M41 \and  65K05  \and  65K15 \and 49M37 \and 
65J22}

\section{Introduction}
\label{sec:introduction}
In this work,  we are concerned with the following composite problem
\begin{equation}
\label{eq:opt_problem}
\tag{\textbf O}
\begin{aligned}
& \min_{u \in \Hs} \hspace{1.4mm}\cost(u) \coloneqq \costr(u) + \costp(u),
\end{aligned}
\end{equation}
where  $\Hs$ is a real Hilbert space,  $\costr$ is a continuously Fr\'echet differentiable  function (possibly nonconvex),   and $\costp$ is a convex function whose proximal operator is assumed to be explicitly computable.  The precise details will be introduced in Section \ref{sec:problem_algorithm}.  Problems of the form \eqref{eq:opt_problem} appear in several fields of application such as optimal control  problems \cite{AzmKunRod,6422363},   system identification , signal and image processing \cite{MR2858838},   machine learning,  and statistics \cite{parikh2014proximal}.  

Arguably one of the most well-known algorithms for solving problem \eqref{eq:opt_problem}  is forward-backward splitting (FBS),  also known as the proximal gradient method \cite{B17,MR2858838},  which is the generalization of the classical gradient method for problems with an additional nonsmooth term (see  \eqref{eq:update_v3}).  The iterations of FBS are defined by
\begin{align}
\label{eq:update_v1}
& u_{k + 1} = \prox_{\frac{1}{\ssizek} \costp} \left(u_k - \frac{1}{\ssizek} \costrD(u_k)\right), && \text{for } k \in \N_0,
\end{align}
where the step-sizes $\ssizek >0 $ are supposed to be choosen in a way that guarantees convergence of the algorithm and accelerates it.   It is known that global convergence of FBS is to be sublinear of order $(1 / k)$ for the convex case \cite{B17},  where $k$ stands for the number of iterations.  This order can be improved to $(1 / k^2)$ using an inertial variant of
the algorithm based on Nesterov's accelerated techniques \cite{zbMATH05618078,zbMATH06197840}.  Convergence of the iterates of FBS to a critical point of problem \eqref{eq:opt_problem},  even for the nonconvex case,  has been shown for  functions $\cost$ satisfying the Kurdyka--Łojasiewicz property e.g., \cite{zbMATH05382665,zbMATH06145973,MR3707370,MR3341671} or quadratic growth error bounds  e.g., \cite{MR4215308,DruLew_Error,garrigos_convergence_2022,necoara_linear_2019}.  Due to the simplicity and efficiency of FBS, the convergence, complexity, and applicability of this approach have been extensively studied in the literature,  and it is still considered an active research area.  Many different step-size strategies have been proposed and studied in the context of FBS under different assumptions. Among them, we mention  \cite{MR4218411,MR3500980,MR3482398,MR3429743,MR3845278}  for the finite-dimensional setting  and  \cite{MR3616647,MR4215308,MR4430995,MR3707899}  for the infinite-dimensional setting.

As is well-known for smooth problems,  the nonmonotone linesearch appears numerically efficient in situations where the monotone scheme is forced to propagate along the bottom of a narrow curved valley.  Further,  due to the fact that it permits growth in function values,  it can be combined in an efficient manner with spectral gradient methods such as Barzilai-Borwein (BB) step-sizes \cite{azmi_analysis_2020,MR967848},  which inherently have a nonmonotone behavior.   Based on the nonmonotone approach developed by Grippo et al \cite{MR849278},  Wright et al \cite{MR2650165} proposed a nonmontone FBS for convex composite optimization problem posed in $\mathbb{R}^n$.  The convergence of this approach known as SparSa is further studied in \cite{MR2792408}  and the order $(1 / k)$  and R-linear convergence have been proven for convex and strongly convex functions,  respectively.   Very recently,  in  \cite{MR4504980}, the authors managed to prove  convergence to a critical point of this scheme for a finite-dimensional nonconvex composite function under milder conditions,  i.e.,  local Lipschitz continuity of the gradient for the smooth part and uniform continuity for the objective function on its level sets.

From a computational point of view,   both classic gradient methods and FBS can be accelerated provided that they are combined with the BB step-sizes for approximating the curvature of the objective function and its smooth part,  respectively.   In particular,  the BB step-sizes appear to be quite efficient in the context of PDE-constrained optimization.  In case of a strongly quadratic smooth part, R-linear convergence is established in case of a sufficiently small spectral condition number $(\leq 2)$ of the quadratic operator,  see \cite[Remark 3.4]{azmi_analysis_2020}.   However,  even in case of a quadratic smooth part with condition numbers larger than $2$, convergence of the BB step-sizes is not clear and, to guarantee it, one needs to combine the BB step-sizes with a nonmonotone linesearch, which is relying on function evaluations.

Weak and strong convergence can be distinguished only in the infinite-dimensional setting.  In practice, a numerical solution of an infinite-dimensional problem is of course obtained by the numerical implementation of algorithms for finite-dimensional approximations of these problems.  However, to ensure the numerical robustness and stability of the finite-dimensional approximations, it is of great interest to analyze the convergence properties of the algorithm in the infinite-dimensional setting. These are expressed by the so-called mesh-independent principle (MIP),  see e.g.,  \cite{MR821912,Behzad2021,MR1202003,MR2085262,MR912453,MR1049770,MR1756894}.  Roughly speaking,  based on the infinite-dimensional convergence results, this principle gives predictions about the convergence properties of finite-dimensional approximations (discretized problems).  MIP also provides a theoretical justification for the development of refinement strategies,  see e.g., \cite{MR917455}.   

In light of the above discussion, and despite the numerical efficiency of nonmonotone FBS for problems with PDEs, to the best of our knowledge this approach has not yet been studied for infinite-dimensional problems.   In this paper,  we take a step towards this direction and address convergence and  complexity of the nonmonotone FBS for problem \eqref{eq:opt_problem}.

\subsection*{Contribution}
\label{sec:introduction:contribution}
More precisely,  the contributions of this work can be summarized as follows: \rom{1} Starting with the nonconvex case,  we prove well-posedness of the algorithm even without Lipschitz continuity of the gradient  of $\costr$.  Under the global Lipschitz continuity of the gradient  of $\costr$,   we prove global convergence of the algorithm with complexity $(1 / \sqrt{k})$.  To be more precise,  we show that the norm of the prox-gradient mapping of iterates vanishes with complexity   $(1 / \sqrt{k})$.  \rom{2} We also establish that every weak sequential cluster point of iterates is a stationary point.  \rom{3} We derive the worst-case evaluation complexity of finding an $\tol$-stationary point.  More precisely,   we give estimates on the maximal number of function and  prox-grad  operator evaluations for computing an approximate stationary point with a user-defined accuracy threshold $\tol>0$.  \rom{4}  In the convex setting,  relying on the concept of quasi-Fejer sequences,  we are able to extend previously established results to global convergence,   both in terms of  function values  and iterates with respect to the sequential weak topology.   Further,  we show that the convergence is sublinear of order $(1 / k)$ in function values.  \rom{5}  Under quadratic growth-type conditions,  we show global R-linear convergence, both in terms of function values and iterates.   The proof of the latter is more delicate for infinite-dimensional problems since the transition from weak sequential convergence to strong convergence is not straightforward.   \rom{6}  Finally,  aiming at  optimization problems governed by PDEs,  we discuss the validity of the convergence results of  the algorithm without strong Lipschitz continuity of $\costrD$.  Our theoretical framework is supported by two nonsmooth problems governed by PDEs,  including semilinear elliptic and parabolic equations.  Finally, we show that our results are applicable to these problems and report on numerical findings.

\subsection*{Outline of the paper}
The rest of this paper is organized as follows: Section \ref{sec:problem_algorithm} presents the preliminaries,  assumptions on the optimization problem \eqref{eq:opt_problem},  the algorithm,  and the nonmonotone linesearch strategy.    Section \ref{sec:convergence_complexity} investigates the convergence and complexity of  the algorithm comprehensively under different conditions.  Section \ref{sec:PDE}  discusses the applicability of the results from the previous section to PDE-constrained optimization problems.  Finally,  numerical experiments are reported in Section \ref{sec:numerics} that justify our theoretical findings.  To improve the readability of the paper,  we provide proofs to some results from Section \ref{sec:convergence_complexity} in Appendix \ref{Appendix}.

\subsection*{Notation}
 Throughout this paper,   the Hilbert space $\Hs$ is endowed with the scalar product $(\cdot, \cdot)_\Hs$ and the induced norm $\|\cdot\|_\Hs$.  For a radius $r>0$ and  $\bar{u} \in \Hs$,  we define $\mathbf{B}_{r} (\bar{u}):=\{ u \in \Hs :  \| u-\bar{u} \|_{\Hs} <r \}$.  We also denote with $P_{S}: \Hs \to \Hs$,  the orthogonal projection onto the set $S \subset \Hs$.  Further for every  $\tilde{\cost} \in \mathbb{R}$,  we define  $\left[\cost \leq  \tilde{\cost}  \right]:=\{ u\in \Hs: \cost(u)\leq \tilde{\cost} \}$. By $\Argmin \cost$ we denote the set of minimizers of $\cost$. Sometimes we will use set-values (in-)equalities, i.e. $A \leq B$ for $A, B \subset \Hs$, meaning that $a \leq b$ for all $a \in A$ and $b \in B$.
   
We call a sequence $\{ u_k \}_k  \subset \Hs$  quasi-F\'ejer monotone with respect to a non-empty set $S \subset \Hs$,  if for every $v \in S$ it holds that 
\begin{equation*}
\|u_{k+1}-v\|^2_{\Hs} \leq  \|u_{k}-v\|^2_{\Hs} + \epsilon_k,      
\end{equation*} 
where $ \{\epsilon_k\}_k \subset \mathbb{R}_{\geq 0}$ is a summable sequence,  i.e.,  $\sum^{\infty}_{k =0} \epsilon_k <\infty $.   
To avoid confusion in the notation,  the derivative and gradient of  $\costr$ are  defined  by  $\costr' \colon \Hs \rightarrow \Hs'$ and  $\nabla \costr \colon \Hs \rightarrow \Hs$, respectively.  In this case,  for every $u \in \Hs$  we identify  $\nabla \costr(u) \in \Hs$  with the unique Riesz representative of $\costr' \in \Hs'$.

\section{Problem formulation and algorithm}
\label{sec:problem_algorithm}
In this section the precise theoretical framework and algorithm will be introduced.   First,  we review some notions of stationarity for nonconvex and nonsmooth functions. The \text{Fr\'echet subdifferential} of $ \phi: \Hs \to \R \cup \{\pm \infty\}$ at $u \in \dom \phi$ is defined as 
\begin{equation*}
 \hat{\partial} \phi(u):=\left\{  w \in \Hs  :  \liminf_{v\to u}  \frac{\phi(v)-\phi(u)-(w,v-u)_\Hs}{\|v-u\|_\Hs} \geq 0  \right\}. 
\end{equation*}    
For this notion of subdifferential,  we obtain the Fermat principle \cite[Proposition 9.1.5]{zbMATH05162727}  for the local minimizers stating the following.
\begin{proposition}
\label{Pro: Fermat}
Let $ \phi: \Hs \to \R \cup \{\pm \infty\}$ be proper and lower semicontinuous  and $u^* \in \dom \phi$ be a local minimizer.  Then $0 \in \hat{\partial} \phi (u^*)$. 
\end{proposition}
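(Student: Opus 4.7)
The plan is to apply the definition of the Fréchet subdifferential directly with $w=0$ and exploit the local minimality of $u^*$. Specifically, I would verify that
\begin{equation*}
\liminf_{v \to u^*} \frac{\phi(v) - \phi(u^*) - (0, v - u^*)_\Hs}{\|v - u^*\|_\Hs} = \liminf_{v \to u^*} \frac{\phi(v) - \phi(u^*)}{\|v - u^*\|_\Hs} \geq 0.
\end{equation*}

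First I would use the fact that $u^*$ is a local minimizer to select a radius $r > 0$ such that $\phi(v) \geq \phi(u^*)$ for every $v \in \mathbf{B}_{r}(u^*) \cap \dom \phi$. For such $v \neq u^*$, the numerator $\phi(v) - \phi(u^*)$ is nonnegative while the denominator $\|v - u^*\|_\Hs$ is strictly positive, so each difference quotient entering the liminf is nonnegative. Taking the liminf as $v \to u^*$ therefore preserves the inequality and yields the required bound.

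Since the defining condition of $\hat{\partial}\phi(u^*)$ is satisfied for the choice $w = 0$, we conclude $0 \in \hat{\partial}\phi(u^*)$. There is essentially no obstacle here: the statement is an immediate unpacking of the definition of the Fréchet subdifferential against the definition of a local minimizer, and no properness or lower semicontinuity of $\phi$ actually needs to be invoked in the argument beyond ensuring that $\phi(u^*)$ is a well-defined real number (guaranteed by $u^* \in \dom\phi$ together with $u^*$ being a minimizer, hence $\phi(u^*) > -\infty$).
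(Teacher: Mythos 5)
Your proof is correct and is exactly the standard one-line verification that the paper itself omits (it merely cites \cite[Proposition 9.1.5]{zbMATH05162727}): plugging $w=0$ into the definition and using local minimality to make every difference quotient nonnegative near $u^*$ settles the claim. The only slip is in your closing remark: local minimality does not rule out $\phi(u^*)=-\infty$ (such a point would trivially be a minimizer); it is properness that guarantees $\phi(u^*)>-\infty$ and hence that the numerator $\phi(v)-\phi(u^*)$ is well defined.
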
 
The Fr\'echet subdifferential  fails to be outer semicontinuous,  which is also not desirable.  To mitigate this issue,  we mention the \textit{Mordukhovich} or \textit{ limiting  subdifferential } of  $\phi$ at  $ u  \in \dom \phi$, which is defined  as the sequential Painlev\'e-Kuratowski outer limit  of $\hat{\partial} \phi (u)$ \cite[Theorem 2.34]{zbMATH02176481}, that is
\begin{equation*}
\partial_M \phi(u):= \left\{ w \in \Hs : \exists \{ u_n\}_n ,  \{ w_n\}_n \subset \Hs \text{ s.t.  } u_n \xrightarrow{\phi} u , w_n \rightharpoonup w  \text{ with } w_n \in \hat{\partial} \phi(u_n) \right\}.
\end{equation*}
Here $u_n \xrightarrow{\phi} u$ stands for  $u_n \to u$ with $\phi(u_n) \to \phi(u)$.  We can infer directly from the definition  that $\hat{\partial} \phi(u) \subset   \partial_M \phi(u)$.  Here,  we summarize from \cite{zbMATH02176481},   some of the results for these subdifferentials
\begin{itemize}
\item  If  $\phi$  is convex, the above notions of subdifferential coincide with the convex subdifferential. That is $\hat{\partial} \phi(u)  =  \partial_M \phi(u)  = \partial \phi(u)$ with $\partial \phi(u):=\{ w \in \Hs :  \phi(v)-\phi(u) \geq (w,v-u)_\Hs   \text{ for all  } v \in \Hs \}$.

\item  For every continuously Fr\'echet differentiable function $\phi$,   the above notions of subdifferential reduce to the first derivative, i.e.  $\hat{\partial} \phi(u)  =  \partial_M \phi(u)  = \{ \nabla \phi(u)\}$.

\item  Given an arbitrary function $\psi: \Hs \to \R \cup \{\pm \infty\}$, which is finite at $u$ and a function $\phi: \Hs \to \R \cup \{\pm \infty\}$, that is continuously Fr\'echet differentiable at $u$, we have
\begin{equation}
\label{eq:subdiff_rule}
\hat{\partial} (\phi +\psi)(u) =  \nabla\phi (u)+ \hat{\partial}\psi(u)   \text{ and } \partial_M (\phi +\psi)(u) =   \nabla \phi (u)+ \partial_M \psi(u).
\end{equation}
\end{itemize} 
 Therefore,  according to Assumption \ref{ass:general} and Proposition \eqref{Pro: Fermat},  the Fermat principle for \eqref{eq:opt_problem} reads as follows:  If $u^*\in \Hs$  is a local minimizer of  $\cost$, then 
 \begin{equation}
\label{stationary}
 -\costrD(u^*) \in \partial \costp(u^*), 
 \end{equation}
and,  with similar arguments as in the proof  \cite[Theorem 26.2]{MR3616647},  \eqref{stationary} can be equivalently expressed as 
\begin{equation}
\label{stationary2}
u^* = \prox_{\frac{1}{\ssize} \costp}(u^*-\frac{1}{\ssize}\costrD(u^*)) \quad  \text{ for some }  \ssize >0.
\end{equation}
where the proximal operator  $\prox_{\frac{1}{\ssize} \costp}: \Hs \to \Hs$ is defined by
\begin{equation*}
\prox_{\frac{1}{\ssize} \costp} (u) :=  \argmin_{v\in \Hs}\left(  \costp(v)+ \frac{\ssize}{ 2}\|v-u \|_\Hs^2 \right).
\end{equation*}  
This operator is well-defined provided that  $\costp$ is proper,  convex,  and lower semicontinuous.  We also define the set of critical points by
\begin{equation*}
\crit := \{  u \in \Hs :  -\costrD(u) \in \partial \costp(u) \}. 
\end{equation*}
Now we are in the position that we can state the precise assumptions on \eqref{eq:opt_problem}.

\begin{assumption} For  problem \eqref{eq:opt_problem}, 
\label{ass:general} 
\begin{description}
\item[\namedlabel{ass:general_1}{\textbf{A1}}:] $\costp \colon \Hs \rightarrow \R \cup \{\pm \infty\}$ is  proper, convex,  and lower semicontinuous. 
\item[\namedlabel{ass:general_2}{\textbf{A2}}:] $\costr \colon \Hs \rightarrow \R$ is continuously Fr\'echet differentiable on $\domF$ containing  $\dom \costp$, that is,  $\dom \costp \subseteq \domF$. 
\item[\namedlabel{ass:general_3}{\textbf{A3}}:]  $\costrD \colon \Hs \rightarrow \Hs$  is globally $\LipcostrD$-Lipschitz continuous.
\item[\namedlabel{ass:general_4}{\textbf{A4}}:] The gradient  $\costrD \colon \domF \rightarrow \Hs$ is weak-to-strong sequentially continuous.
\end{description}
\end{assumption}
Conditions  \ref{ass:general_1}-\ref{ass:general_2} are standard in order to guarantee  the well-posedness of the algorithm.  \ref{ass:general_3} will be used to derive complexity results for iterations and we will discuss the relaxation of this condition for a large class of problems governed by PDEs.  We use \ref{ass:general_4} to show that  every weak sequential  accumulation point of iterates belongs to the set $\crit$.  Note that, if $ \dim(\Hs) < \infty$,  \ref{ass:general_4} follows from \ref{ass:general_2}.
 
Now we turn our attention towards formalizing \eqref{eq:update_v1} and the proposed step-size update strategy by a nonmonotone linesearch method. Therefore we introduce the prox-grad operator and the gradient mapping analogously to \cite{B17,DruLew_Error} in the Hilbert space setting.
\begin{definition}
\label{def:prox_gmap} 
For every $\alpha \in \mathbb{R}_{>0}$,   we define
\begin{enumerate}
\item  the  \textit{prox-grad} operator $\pgrad_\ssize \colon \domF \rightarrow  \dom \costp$ with $u \mapsto \prox_{\frac{1}{\ssize} \costp}(u - \frac{1}{\ssize} \costrD(u))$.  
\item the \textit{gradient mapping}  $\gmap_\ssize  \colon \domF \rightarrow \Hs$  with  $u \mapsto \ssize (u - \pgrad_\ssize(u))$.
\end{enumerate}
\end{definition}
Due to Definition \ref{def:prox_gmap} and by some simple computations,  we obtain for every $u \in \domF$ that 
\begin{equation}
\label{eq:prox}
\gmap_\ssize(u)-\costrD(u) \in \partial \costp( \pgrad_\ssize(u)). 
\end{equation}
Further,  if we define for every  $u \in \domF$,  $ w \in \Hs$,  and $\ssize  \in \mathbb{R}_{>0}$  
\begin{equation*}
\mathcal{Q}_{\ssize}(w,u):= \costr(u)+ (\costrD(u), w-u)_\Hs + \frac{\ssize}{2}\|w-u\|_\Hs^2 +  \costp(w),
\end{equation*}
then $\pgrad_{\ssize}(u)$ is the unique minimizer of $\mathcal{Q}_{\ssize}(\cdot, u)$, i.e.  
\begin{equation*}
 \pgrad_{\ssize}(u)= \argmin_{w \in \Hs} \mathcal{Q}_{\ssize}(w,u). 
\end{equation*} 
As a consequence,  we can write 
\begin{equation}
\label{eq:B59}
(\costrD(u), \pgrad_{\ssize}(u)-u)_\Hs+\frac{1}{2\ssize}\|\gmap_\ssize(u)\|_\Hs^2 +  \costp(\pgrad_{\ssize}(u)) \leq   \costp(u). 
\end{equation} 
By means of the prox-grad operator and the gradient mapping \eqref{eq:update_v1} can be reformulated as follows:
\begin{align}
\label{eq:update_v2}
u_{k+1} & = \pgrad_{\ssizek}(u_k), && \text{for } k \in \N_0,\\
\label{eq:update_v3}
u_{k+1} & = u_k - \frac{1}{\ssizek} \gmap_{\ssizek}(u_k), && \text{for } k \in \N_0.
\end{align}
In this case,   for  every $u_0 \in \domF$,  the iterations are well-defined,  that is $\{ u_k \}_{k} \subset \dom \costp \subseteq \Hs$. Further,  due to \eqref{eq:update_v3} we can see  the analogy between the proximal gradient method and classical gradient descent for smooth minimization.  We also can characterize the critical points of  \eqref{eq:opt_problem},  using the notion of the gradient mapping  and \eqref{stationary2}  as follows.
 \begin{proposition}
\label{prop:stationary_point}
For $u^* \in \domF$, it holds that $\gmap_\ssize(u^*) = 0$ for some $\ssize \in \mathbb{R}_{>0}$ if and only if $u^* \in \crit$.
\end{proposition}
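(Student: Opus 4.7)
The plan is to exploit the two characterizations of critical points already set up in the excerpt: the subdifferential form \eqref{stationary} and the fixed-point form \eqref{stationary2}, and to translate the condition $\gmap_\ssize(u^*) = 0$ into a fixed point condition for $\pgrad_\ssize$.

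First I would observe that by the very definition of the gradient mapping, $\gmap_\ssize(u^*) = 0$ is equivalent to $u^* = \pgrad_\ssize(u^*)$, since $\gmap_\ssize(u^*) = \ssize(u^* - \pgrad_\ssize(u^*))$ and $\ssize > 0$. This reduces the claim to the equivalence
\[
u^* = \pgrad_\ssize(u^*) \text{ for some } \ssize > 0 \iff -\costrD(u^*) \in \partial \costp(u^*),
\]
which is precisely the content of \eqref{stationary2} combined with the definition of $\crit$. So in principle one sentence suffices.

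For a self-contained proof in both directions I would proceed as follows. For the forward direction, assume $\gmap_\ssize(u^*) = 0$. Then $u^* = \pgrad_\ssize(u^*)$, and applying the inclusion \eqref{eq:prox} at $u = u^*$ yields $-\costrD(u^*) = \gmap_\ssize(u^*) - \costrD(u^*) \in \partial \costp(\pgrad_\ssize(u^*)) = \partial \costp(u^*)$, so $u^* \in \crit$. For the reverse direction, assume $u^* \in \crit$, fix any $\ssize > 0$, and consider the optimality condition for the strongly convex problem defining $\pgrad_\ssize(u^*)$: a point $v$ is its minimizer iff $\ssize(u^* - v) - \costrD(u^*) \in \partial \costp(v)$. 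Plugging in $v = u^*$ reduces this to $-\costrD(u^*) \in \partial \costp(u^*)$, which holds by assumption. Uniqueness of the proximal point then gives $u^* = \pgrad_\ssize(u^*)$, i.e.\ $\gmap_\ssize(u^*) = 0$.

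There is no real obstacle here; the only subtlety is remembering that $\pgrad_\ssize(u^*)$ is a single-valued, strongly convex minimizer under \ref{ass:general_1}, so that the optimality condition is both necessary and sufficient. A closing remark is in order: the reverse implication actually shows the stronger statement that if $u^* \in \crit$ then $\gmap_\ssize(u^*) = 0$ for \emph{every} $\ssize > 0$, not just for some, which aligns with how the algorithm's stopping criterion is used later in the paper.
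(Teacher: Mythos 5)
Your proposal is correct and follows essentially the same route as the paper, which states the proposition as an immediate consequence of the equivalence \eqref{stationary2} and the definition $\gmap_\ssize(u^*)=\ssize\bigl(u^*-\pgrad_\ssize(u^*)\bigr)$; you merely spell out the two directions via \eqref{eq:prox} and the optimality condition of the strongly convex proximal subproblem. Your closing remark that criticality in fact forces $\gmap_\ssize(u^*)=0$ for \emph{every} $\ssize>0$ is also accurate and consistent with properties \ref{P1}--\ref{P2} used later in the paper.
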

Therefore   $\gmap_\ssize(\ubar) = 0$ defines a natural termination condition analogously to the smooth case.

There are many possible choices for the step-size $\ssizek$  in our iterative scheme. In this work we will consider step-size updates by a BB-type update rule or a combination  of them with a nonmonotone linesearch.  To be more precise,  as the first trial step-size within the nonmonotone linesearch,  we will choose one  of the  spectral gradient BB-types step-sizes defined by
\begin{align*}
\ssizeBBoneOld_k & \coloneqq \frac{ {(u_k - u_{k-1}, \costrD(u_k) - \costrD(u_{k-1}))}_\Hs}{{(u_k - u_{k-1}, u_k - u_{k-1})}_\Hs},\\ 
\ssizeBBtwoOld_k & \coloneqq \frac{{(\costrD(u_k) - \costrD(u_{k-1}), \costrD(u_k) - \costrD(u_{k-1}))}_\Hs}{{(u_k - u_{k-1}, \costrD(u_k) - \costrD(u_{k-1})}_\Hs},\\
\ssizeBBoneNew_k & \coloneqq \frac{{(u_k - u_{k-1}, \gmap_{\ssize_{k-1}}(u_k) - \gmap_{\ssize_{k-1}}(u_{k-1}))}_\Hs}{{(u_k - u_{k-1}, u_k - u_{k-1})}_\Hs},\\ 
\ssizeBBtwoNew_k & \coloneqq \frac{{(\gmap_{\ssize_{k-1}}(u_k) - \gmap_{\ssize_{k-1}}(u_{k-1}), \gmap_{\ssize_{k-1}}(u_k) - \gmap_{\ssize_{k-1}}(u_{k-1}))}_\Hs}{{(u_k - u_{k-1}, \gmap_{\ssize_{k-1}}(u_k) - \gmap_{\ssize_{k-1}}(u_{k-1})}_\Hs}.
\end{align*}
The first two strategies correspond to the BB-method presented e.g. in \cite{Behzad2021}. The last two novel BB-methods modify the classical BB-method and try to incorporate full first order information by using the gradient mapping and not only the gradient of $\costr$. Furthermore we will use so called alternating BB-type update rules given by
\begin{align*}
\ssizeABBOld & \coloneqq \ssizeBBoneOld \text{ for } k \text{ even and } \ssizeBBtwoOld \text{ for } k \text{ odd},\\
\ssizeABBNew & \coloneqq \ssizeBBoneNew \text{ for } k \text{ even and } \ssizeBBtwoNew \text{ for } k \text{ odd}.
\end{align*}
\begin{remark}
In the case of $\costp = 0$ , we have $\gmap_{l}(u) = \costrD(u)$ for every $l>0$ and $u \in H$.  Therefore,  many of the introduced step-size updates above are identical, i.e. $\ssizeBBoneNew = \ssizeBBoneOld, \ssizeBBtwoNew = \ssizeBBtwoOld$ and $\ssizeABBNew = \ssizeABBOld$.
\end{remark}
For the nonmonotone linesearch update,  we choose
\begin{align}
\label{eq:ls_ineq}
\cost(u_{k+1}) \leq \max\limits_{0 \leq j \leq m(k)} \cost(u_{k - j}) - \frac{\delta}{\ssizek} \|\gmap_{\ssizek}(u_k)\|_\Hs^2,
\end{align}
with $0 < \delta < 1$ and memory $m \colon \N_0 \rightarrow \N_0$ satisfying
\begin{align}
\label{eq:ls_memory}
& m(0) = 0, && m(k) = \min\{m(k-1) + 1, \mmax\} && \text{for } k \in \N,
\end{align}
with upper bound $\mmax \in \N_0$.   Similarly to \cite{MR1915930},  we also define the functions $\ell:\mathbb{N}_0 \to \mathbb{N}_0$ and  $\nu:\mathbb{N}_0 \to \mathbb{N}_0$ with 
\begin{align*}
\ell(k)& := k -  \argmax\limits_{0 \leq j \leq m(k)}  \cost(u_{k - j})     \quad \text{ for }  k\geq 0,   \text{ with }   k- m(k) \leq \ell(k) \leq k, \\
\nu(k)& := \ell(k \mmax+k),       \quad \text{ for }  k\geq 0.
\end{align*}
Thus,  by these notations,  we have $\max\limits_{0 \leq j \leq m(k)} \cost(u_{k - j}) = \cost(u_{\ell(k)})$  and  \eqref{eq:ls_ineq} can be rewritten as 
\begin{equation}
\label{eq:B55b}
\cost(u_{k+1}) \leq \cost(u_{\ell(k)}) - \frac{\delta}{\ssizek} \|\gmap_{\ssizek}(u_k)\|_\Hs^2.
\end{equation}
Further we set
\begin{align}
\label{eq:ls_update}
\ssizek = \ssizeinit \eta^{i_k},
\end{align}
where $\eta > 1$. The initial step-size $\ssizeinit > 0$ is chosen by the BB-method and a lower bound $\ssizelb > 0$ is given to ensure nonnegativity. To limit the initial step-size numerically, also an upper bound $\ssizeub > \ssizelb$ is employed.  In the update we choose the smallest integer $i_k \in \N_0$ in \eqref{eq:ls_update}, such that \eqref{eq:ls_ineq} is satisfied. This procedure is summarized in Algorithm \ref{algo:NMLS}.
\newline
\smallskip
\IncMargin{1em}
	\begin{algorithm}[H]
		\SetKwData{Left}{left}\SetKwData{This}{this}\SetKwData{Up}{up}
		\SetKwFunction{Union}{Union}\SetKwFunction{FindCompress}{FindCompress}
		\SetKwInOut{Input}{Require}\SetKwInOut{Output}{Return}
		\Input{ $0 < \delta < 1$,  $\mmax \in \mathbb{N}_0$,  $\eta>1$,  $ \ssizeub > \ssizelb > 0$, $u_0 \in \Hs$ and $\ssize_0 > 0$}
		\Output{Approximation $u^* \in \Hs$ of stationary point of \eqref{eq:opt_problem}}
	       Set  $k =0$ \; 
		\While{$\|\gmap_{\ssizek}(u_k)\|_\Hs > 0$}{
			  Compute $\ssizeBB_k$ according to a BB-method and set $\ssizeinit := \max \{\ssizelb, \min \{\ssizeub,  \ssizeBB_k  \} \}$\; 		
			Set  $\ssizek = \ssizeinit \eta^{i_k}$,  where $i_k \geq 0$ is the smallest integer for which  \eqref{eq:ls_ineq} holds  \;
			Set $u_{k+1} = u_k - \frac{1}{\ssizek} \gmap_{\ssizek}(u_k)$ and $k=k+1$ \;
			}
		\caption{ Nonmonotone Splitting Algorithm}\label{algo:NMLS}
	\end{algorithm}
\smallskip
\noindent
Note that by Proposition \ref{prop:stationary_point} the criterion $\|\gmap_{\ssizek}(u_k)\|_\Hs \leq \tol$ with some tolerance $\tol > 0$ is a reasonable stopping criterion in the numerical realization of Algorithm \ref{algo:NMLS}.

\section{Convergence and complexity analysis}
\label{sec:convergence_complexity}
In this main section of our work, we present a detailed convergence and complexity analysis for Algorithm  \ref{algo:NMLS} under different assumptions and conditions.

\subsection{General case}
\label{General case}
We start by summarizing useful properties of the gradient mapping.  
\begin{lemma}
\label{lem:properties_of_Grad}
Suppose that \ref{ass:general_1}-\ref{ass:general_2}  hold, then we have the following properties:
\begin{description}
 \item[\namedlabel{P1}{P1}:]  For every $ l_1 \geq l_2 >0 $  and  $u \in \domF$ it holds that  $\frac{1}{l_1} \|\gmap_{l_1}(u)\|_\Hs \leq \frac{1}{l_2} \|\gmap_{l_2}(u)\|_\Hs$. 
\item[\namedlabel{P2}{P2}:]  For every   $ l_1 \geq l_2 >0 $  and    $u \in \domF$ it holds that  $\|\gmap_{l_1}(u)\|_\Hs \geq  \|\gmap_{l_2}(u)\|_\Hs$.
\item[\namedlabel{P3}{P3}:]  Assume in addition  that \ref{ass:general_3} holds,  then 
the gradient mapping is Lipschitz continuous,  that is 
\begin{equation*}
\|\gmap_{l}(u)- \gmap_{l}(v) \|_\Hs \leq (2 l+\LipcostrD)\|u-v\|_\Hs,
\end{equation*}
for every  $ l  >0 $  and  $v, u \in \domF$.
\end{description}
\end{lemma}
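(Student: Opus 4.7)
The plan is to derive \ref{P1}--\ref{P2} from monotonicity of the convex subdifferential $\partial\costp$, and to obtain \ref{P3} from nonexpansivity of the proximal operator combined with \ref{ass:general_3}. The two pairs of claims are essentially independent, so I would treat them separately.

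For \ref{P1}--\ref{P2}, the starting point is the inclusion \eqref{eq:prox} applied at $u$ for both parameters, giving $\gmap_{l_i}(u) - \costrD(u) \in \partial\costp(\pgrad_{l_i}(u))$ for $i = 1, 2$. Monotonicity of $\partial\costp$ then yields
\begin{equation*}
(\gmap_{l_1}(u) - \gmap_{l_2}(u),\, \pgrad_{l_1}(u) - \pgrad_{l_2}(u))_\Hs \geq 0.
\end{equation*}
Using the identity $\pgrad_{l_i}(u) = u - \frac{1}{l_i}\gmap_{l_i}(u)$ from Definition \ref{def:prox_gmap}, abbreviating $A := \gmap_{l_1}(u)$ and $B := \gmap_{l_2}(u)$, and multiplying by $l_1 l_2 > 0$, this rearranges to
\begin{equation*}
l_2\|A\|_\Hs^2 - (l_1+l_2)(A,B)_\Hs + l_1\|B\|_\Hs^2 \leq 0.
\end{equation*}
After replacing $(A,B)_\Hs$ by its Cauchy-Schwarz upper bound $\|A\|_\Hs\|B\|_\Hs$, the left-hand side factors as $(l_2\|A\|_\Hs - l_1\|B\|_\Hs)(\|A\|_\Hs - \|B\|_\Hs) \leq 0$. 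A short sign analysis that invokes $l_1 \geq l_2$ excludes the branch in which the two factors would both be nonzero of opposite signs in the wrong direction, forcing $\|A\|_\Hs \geq \|B\|_\Hs$ (which is \ref{P2}) and $l_2\|A\|_\Hs \leq l_1\|B\|_\Hs$ (which, divided by $l_1 l_2$, is \ref{P1}). The degenerate case $\|B\|_\Hs = 0$ is handled by Proposition \ref{prop:stationary_point}, which forces $u \in \crit$ and hence $\|A\|_\Hs = 0$ as well.

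For \ref{P3}, I would start from $\gmap_l = l(\mathrm{Id} - \pgrad_l)$ and apply the triangle inequality to obtain
\begin{equation*}
\|\gmap_l(u) - \gmap_l(v)\|_\Hs \leq l\|u-v\|_\Hs + l\|\pgrad_l(u) - \pgrad_l(v)\|_\Hs,
\end{equation*}
then control the second summand by nonexpansivity of $\prox_{\frac{1}{l}\costp}$ together with \ref{ass:general_3}, namely
\begin{equation*}
\|\pgrad_l(u) - \pgrad_l(v)\|_\Hs \leq \left\|(u-v) - \tfrac{1}{l}\bigl(\costrD(u) - \costrD(v)\bigr)\right\|_\Hs \leq \left(1 + \tfrac{\LipcostrD}{l}\right)\|u-v\|_\Hs,
\end{equation*}
so that assembling yields the claimed constant $2l + \LipcostrD$. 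The main obstacle is the factorization step in \ref{P1}--\ref{P2}: one must first weaken the inner product via Cauchy-Schwarz before attempting to factor (otherwise the cross term obstructs a clean sign argument), and it is precisely the hypothesis $l_1 \geq l_2$ that excludes the branch on which $\|A\|_\Hs \leq \|B\|_\Hs$ and $l_2\|A\|_\Hs \geq l_1\|B\|_\Hs$ could hold simultaneously.
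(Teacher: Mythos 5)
Your argument is correct and is essentially the same one the paper points to: the paper simply defers to the finite-dimensional proofs in \cite[Theorem 10.9, Lemma 10.10]{B17}, which establish \ref{P1}--\ref{P2} via the second prox theorem (equivalently, monotonicity of $\partial \costp$ applied to the inclusion \eqref{eq:prox} at the two parameters, followed by Cauchy--Schwarz and the factorization of the resulting quadratic) and \ref{P3} via the triangle inequality plus nonexpansiveness of the proximal operator and \ref{ass:general_3}. Your write-up carries these arguments over to the Hilbert-space setting verbatim, including a correct treatment of the degenerate branch, so there is nothing to add.
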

\begin{proof}
The proof follows by similar arguments as e.g. given in \cite[Theorem 10.9, Lemma 10.10]{B17} for finite-dimensional problems.
 \end{proof}

Furthermore the well-known sufficient decrease condition can be formulated with respect to the gradient mapping analogously to the finite-dimensional case presented in \cite[Lemma 10.4]{B17}.
\begin{lemma}[Sufficient Decrease Lemma]\label{lem:suff_decrease}
Suppose that \ref{ass:general_1}-\ref{ass:general_3} hold,  then for every $u \in \domF$ and $l \in (\frac{\LipcostrD}{2}, \infty)$,  we have 
\begin{equation*}
\cost(\pgrad_{l}(u))  \leq  \cost(u) -\frac{l-\frac{\LipcostrD}{2}}{l^2} \|\gmap_{l}(u)\|_\Hs^2.
\end{equation*}   
\end{lemma}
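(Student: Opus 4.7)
The plan is to combine two classical ingredients: the descent inequality coming from the $\LipcostrD$-Lipschitz continuity of $\costrD$ (assumption \ref{ass:general_3}), and the $l$-strong convexity of the model function $\mathcal{Q}_{l}(\cdot, u)$ used to define $\pgrad_{l}(u)$. The descent inequality gives
\begin{equation*}
\costr(\pgrad_{l}(u)) \leq \costr(u) + (\costrD(u),\, \pgrad_{l}(u) - u)_\Hs + \frac{\LipcostrD}{2}\,\|\pgrad_{l}(u) - u\|_\Hs^{2},
\end{equation*}
and since $\pgrad_{l}(u) - u = -\tfrac{1}{l}\gmap_{l}(u)$ by Definition \ref{def:prox_gmap}, the last term simplifies to $\tfrac{\LipcostrD}{2 l^{2}}\|\gmap_{l}(u)\|_\Hs^{2}$.

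Next I would bound the cross term $(\costrD(u), \pgrad_{l}(u) - u)_\Hs$ from above by exploiting the fact that $\pgrad_{l}(u)$ is the \emph{minimizer} of the $l$-strongly convex functional $\mathcal{Q}_{l}(\cdot, u)$. For an $l$-strongly convex function with minimizer $\pgrad_{l}(u)$ we have the characterization $\mathcal{Q}_{l}(w, u) \geq \mathcal{Q}_{l}(\pgrad_{l}(u), u) + \tfrac{l}{2}\|w - \pgrad_{l}(u)\|_\Hs^{2}$ for every $w$. Setting $w = u$ and expanding both sides using the definition of $\mathcal{Q}_{l}$ yields, after the cancellation of $\costr(u)$,
\begin{equation*}
(\costrD(u),\, \pgrad_{l}(u) - u)_\Hs + \frac{1}{l}\,\|\gmap_{l}(u)\|_\Hs^{2} + \costp(\pgrad_{l}(u)) \leq \costp(u).
\end{equation*}
This is a sharpening of the basic optimality inequality \eqref{eq:B59} by a factor of two in the quadratic term, and it is precisely what is needed.

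Finally I would add the two displayed inequalities. The cross term $(\costrD(u), \pgrad_{l}(u) - u)_\Hs$ cancels, the $\costp$ contributions combine into $\costp(\pgrad_{l}(u)) - \costp(u)$, and collecting the two quadratic coefficients gives $\tfrac{\LipcostrD}{2 l^{2}} - \tfrac{1}{l} = -\tfrac{l - \LipcostrD/2}{l^{2}}$, from which
\begin{equation*}
\cost(\pgrad_{l}(u)) \leq \cost(u) - \frac{l - \tfrac{\LipcostrD}{2}}{l^{2}}\,\|\gmap_{l}(u)\|_\Hs^{2}
\end{equation*}
follows. The coefficient is strictly positive precisely under the assumption $l > \LipcostrD/2$.

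I do not expect a genuine obstacle here; the only subtle point is that invoking only the defining inequality \eqref{eq:B59} (which uses $\tfrac{1}{2 l}$ in place of $\tfrac{1}{l}$) produces the weaker threshold $l > \LipcostrD$ and a factor of $\tfrac{1}{2}$ worse decrease. One must therefore really use the strong-convexity version of the optimality inequality for $\pgrad_{l}(u)$, not just the inequality $\mathcal{Q}_{l}(\pgrad_{l}(u), u) \leq \mathcal{Q}_{l}(u, u)$ recorded in \eqref{eq:B59}.
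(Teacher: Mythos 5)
Your argument is correct and is precisely the standard proof that the paper itself invokes: the paper gives no written proof of Lemma \ref{lem:suff_decrease} but defers to the finite-dimensional argument in \cite[Lemma 10.4]{B17}, which combines the descent lemma with the strong-convexity (second prox) inequality for the minimizer of $\mathcal{Q}_l(\cdot,u)$ exactly as you do. Your closing observation is also on point: the weaker optimality inequality \eqref{eq:B59} alone would only yield the threshold $l>\LipcostrD$, so the $l$-strong convexity of $\mathcal{Q}_l(\cdot,u)$ is genuinely needed to reach $l>\LipcostrD/2$.
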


The previous lemmas allow us to summarize some important properties of Algorithm \ref{algo:NMLS}.

\begin{lemma} 
\label{lem:properties_of_iter}
Suppose that \ref{ass:general_1}-\ref{ass:general_2} hold.  Then for every $k\geq 0$,  the following statements hold true:
\begin{enumerate}[label=(\roman*)]
\item For  $ \delta\in  (0,\frac{1}{2})$,  the nonmonotone linesearch is well-defined.  That is,   there exists $\underline{\alpha}>0 $ such that for every $u_k \in  \domF$  and  $\ssizek \in [ \underline{\alpha}, \infty) $ the nonmonotone rule \eqref{eq:ls_ineq} holds and thus,  the nonmonotone  linesearch terminates after finitely  many iterations.

\end{enumerate}
Assume,  in addition,  \ref{ass:general_3} holds.
\begin{enumerate}[resume,label=(\roman*)]
\item  Then the statement of  \rom{1}  is true for $\underline{\alpha} \coloneqq \frac{\LipcostrD}{2(1-\delta)}$  and every fixed  $\delta \in (0,1)$.

\item The step-sizes are uniformly bounded from above.  That is, for every $k \geq 1$,  we have $\ssizek \leq  \ssizebar $ with $\ssizebar \coloneqq \max\{ \frac{ \eta\LipcostrD}{2(1-\delta)}, \ssizeub \}$.

\item  It holds $ \|\gmap_{\ssize_{k+1}}(u_{k+1})\|_\Hs  \leq   C_G  \|\gmap_{\ssizek}(u_{k})\|_\Hs$,   where  $C_G :=\frac{3 \ssizebar + \LipcostrD}{\ssizelb}$.  
\end{enumerate}
\end{lemma}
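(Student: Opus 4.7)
The plan is to treat the four items in sequence, reducing each to a sufficient-decrease estimate on top of the properties collected in Lemma \ref{lem:properties_of_Grad} and the Sufficient Decrease Lemma \ref{lem:suff_decrease}. For \rom{1}, my starting point is the variational inequality \eqref{eq:B59}. Since $\dom \costp \subseteq \domF$ and $\costrD$ is continuous, I can fix a bounded neighbourhood $B$ of $u_k$ on which $\costrD$ is Lipschitz with some local constant $L_k$; by \ref{P1} the displacement $\|\pgrad_\ssize(u_k)-u_k\|_\Hs = \|\gmap_\ssize(u_k)\|_\Hs/\ssize$ is non-increasing in $\ssize$ and tends to $0$, so $\pgrad_\ssize(u_k) \in B$ once $\ssize$ is large enough. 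Combining the local descent inequality for $\costr$ on $B$ with \eqref{eq:B59} and the identity $\pgrad_\ssize(u_k)-u_k = -\gmap_\ssize(u_k)/\ssize$ yields
\begin{equation*}
\cost(\pgrad_\ssize(u_k)) \leq \cost(u_k) - \tfrac{1}{\ssize}\bigl(\tfrac{1}{2}-\tfrac{L_k}{2\ssize}\bigr)\|\gmap_\ssize(u_k)\|_\Hs^2,
\end{equation*}
which, using $\cost(u_k) \leq \cost(u_{\ell(k)})$, implies \eqref{eq:ls_ineq} once $\ssize \geq L_k/(1-2\delta)$. The constraint $\delta < \frac{1}{2}$ is exactly what makes this threshold finite, so the backtracking terminates after finitely many $\eta$-multiplications.

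For \rom{2} I apply Lemma \ref{lem:suff_decrease} directly: the elementary rearrangement $(l-\LipcostrD/2)/l^2 \geq \delta/l \Leftrightarrow l \geq \LipcostrD/(2(1-\delta))$ identifies the announced threshold $\underline{\alpha}$ and is valid for every $\delta \in (0,1)$. Part \rom{3} then follows by a structural argument on the backtracking: either $\ssizeinit$ already satisfies \eqref{eq:ls_ineq}, giving $\ssizek = \ssizeinit \leq \ssizeub$; or the smallest accepted exponent $i_k \geq 1$ must satisfy $\ssizeinit \eta^{i_k-1} < \underline{\alpha}$, since by \rom{2} every $\ssize \geq \underline{\alpha}$ already fulfils \eqref{eq:ls_ineq}. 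Multiplying by $\eta$ gives $\ssizek < \eta \underline{\alpha}$, so in either case $\ssizek \leq \ssizebar$.

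For \rom{4} I use $\ssizebar$ as an intermediate parameter. Applying \ref{P2} (since $\ssize_{k+1} \leq \ssizebar$) and the triangle inequality gives
\begin{equation*}
\|\gmap_{\ssize_{k+1}}(u_{k+1})\|_\Hs \leq \|\gmap_{\ssizebar}(u_{k+1})-\gmap_{\ssizebar}(u_k)\|_\Hs + \|\gmap_{\ssizebar}(u_k)\|_\Hs.
\end{equation*}
The first term is bounded by $(2\ssizebar+\LipcostrD)\|u_{k+1}-u_k\|_\Hs = (2\ssizebar+\LipcostrD)\|\gmap_{\ssizek}(u_k)\|_\Hs/\ssizek$ via \ref{P3} together with \eqref{eq:update_v3}, while \ref{P1} gives $\|\gmap_{\ssizebar}(u_k)\|_\Hs \leq (\ssizebar/\ssizek)\|\gmap_{\ssizek}(u_k)\|_\Hs$. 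Summing and invoking $\ssizek \geq \ssizelb$ produces the claimed $C_G = (3\ssizebar+\LipcostrD)/\ssizelb$. The main obstacle is \rom{1}: one has to argue carefully that the local Lipschitz constant $L_k$ remains valid on a neighbourhood containing every trial iterate generated during the backtracking, and it is precisely the interplay between the shrinking displacement and this local constant that forces the restriction $\delta < \frac{1}{2}$; the remaining parts reduce to bookkeeping on Lemma \ref{lem:properties_of_Grad} and Lemma \ref{lem:suff_decrease}.
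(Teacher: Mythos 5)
Parts \rom{2}--\rom{4} of your argument are correct and essentially coincide with the paper's proof; your route to \rom{3} is in fact slightly cleaner, since you deduce $\ssizek/\eta < \underline{\alpha}$ directly from the contrapositive of \rom{2} instead of re-deriving the bound from the Sufficient Decrease Lemma, and both yield the same constant $\ssizebar$. The computation in \rom{4} (P2, triangle inequality, P3, the identity $\|u_{k+1}-u_k\|_\Hs = \|\gmap_{\ssizek}(u_k)\|_\Hs/\ssizek$, then P1 and $\ssizek \geq \ssizelb$) is exactly the paper's.

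Part \rom{1}, however, contains a genuine gap. The statement is asserted under \ref{ass:general_1}--\ref{ass:general_2} only, i.e.\ $\costrD$ is merely \emph{continuous} on $\domF$; indeed the whole point of \rom{1} (as announced in the paper's contribution section) is well-posedness of the linesearch \emph{without} any Lipschitz assumption on the gradient. Your proof fixes ``a bounded neighbourhood $B$ of $u_k$ on which $\costrD$ is Lipschitz with some local constant $L_k$,'' but continuous Fr\'echet differentiability does not imply local Lipschitz continuity of the gradient, even in one dimension (consider $\costr(x)=|x|^{3/2}$ near $0$). So the descent estimate with constant $L_k$, and hence the explicit threshold $\ssize \geq L_k/(1-2\delta)$, is not available under the stated hypotheses. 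The paper instead argues by contradiction: assuming the linesearch fails along $\ssize_{k_i}\to\infty$, it uses the mean value theorem together with \eqref{eq:B59} to obtain $(\tfrac12-\delta)\|\gmap_{\ssize_{k_i}}(u_k)\|_\Hs \leq \|\costrD(u_k+t_i(\pgrad_{\ssize_{k_i}}(u_k)-u_k))-\costrD(u_k)\|_\Hs$, and then lets the \emph{mere continuity} of $\costrD$ drive the right-hand side to zero, forcing $u_k\in\crit$. A secondary issue: you attribute to \ref{P1} the claim that $\|\pgrad_\ssize(u_k)-u_k\|_\Hs\to 0$ as $\ssize\to\infty$; \ref{P1} only gives monotonicity, and the convergence to zero requires the additional fact that $\prox_{\frac{1}{\ssize}\costp}(u_k)\to u_k$ together with the firm nonexpansiveness of the proximal operator, as carried out in the paper. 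Your closing remark correctly senses that the local constant $L_k$ is the delicate point, but the resolution is not to control it more carefully --- it is to dispense with it altogether.
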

\begin{proof}
\rom{1} If $u_k \in  \crit$, then the claim clearly holds true for every $\underline{\alpha} >0$.    Thus, we assume that $u_k \notin \crit$.   We suppose also on contrary it does not exist $\underline{\alpha} >0$ for which the claim holds true.  Then the nonmonote linesearch generates a sequence of step-sizes $\ssize_{k_i} := \ssizeinit \eta^i$ with $i \in \N_0$ satisfying $\ssize_{k_i} \to \infty$ and
\begin{equation}
\label{eq:B61}
 \frac{\delta}{\ssize_{k_i}} \|\gmap_{\ssize_{k_i}}(u_k)\|_\Hs^2  >  \cost(u_{\ell(k)}) - \cost(\pgrad_{\ssize_{k_i}}(u_k)) \geq  \cost(u_{k}) - \cost(\pgrad_{\ssize_{k_i}}(u_k)).
\end{equation}
First,  we note that $\pgrad_{\ssize_{k_i}}(u_k)\to u_k$ as $i \to \infty$.  This follows from the fact that $\prox_{\frac{1}{\ssize_{k_i}}\costp}(u_k)\to u_k$ as $i \to \infty$  (see e.g., \cite[Theorem 23.47]{MR3616647}) and
\begin{equation*}
\begin{split}
\|\pgrad_{\ssize_{k_i}}(u_k)- u_k\|_\Hs & \leq \|  \prox_{\frac{1}{\ssize_{k_i}} \costp}(u_k - \frac{1}{\ssize_{k_i}} \costrD(u_k))- \prox_{\frac{1}{\ssize_{k_i}} \costp}(u_k) \|_\Hs \\
& +\| \prox_{\frac{1}{\ssize_{k_i}} \costp}(u_k)- u_k\|_\Hs \leq \| \prox_{\frac{1}{\ssize_{k_i}} \costp}(u_k)- u_k\|_\Hs + \frac{1}{\ssize_{k_i}}\|\costrD(u_k)\|_\Hs,
\end{split}
\end{equation*}   
where we have used the firm nonexpansiveness of the proximal operator.   Further,  using  \eqref{eq:B61} and the mean value theorem for $\costr$, we obtain for every $i\in \N_0$,  that 
\begin{equation*}
\begin{split}
& \frac{\delta}{\ssize_{k_i}} \|\gmap_{\ssize_{k_i}}(u_k)\|_\Hs^2 \geq \cost(u_{k}) - \cost(\pgrad_{\ssize_{k_i}}(u_k))\\
& \geq \costp(u_{k}) - \costp(\pgrad_{\ssize_{k_i}}(u_k)) +{(\costrD(u_k+t_i(\pgrad_{\ssize_{k_i}}(u_k))-u_k)),  u_k- \pgrad_{\ssize_{k_i}}(u_k)))}_\Hs,
\end{split}
\end{equation*}
where $t_i \in (0,1]$ for all $i\in \N_0$. Using \eqref{eq:B59} we obtain that 
\begin{equation*}
\begin{split}
& \frac{\delta}{\ssize_{k_i}} \|\gmap_{\ssize_{k_i}}(u_k)\|_\Hs^2 \geq \frac{1}{2\ssize_{k_i}}\|\gmap_{\ssize_{k_i}}(u_k)\|_\Hs^2 \\
& + \left( \costrD\left(u_k+t_i(\pgrad_{\ssize_{k_i}}(u_k)-u_k)\right) -\costrD(u_k),  \pgrad_{\ssize_{k_i}}(u_k)-u_k  \right)_\Hs \\
& \geq  \frac{1}{2\ssize_{k_i}}\|\gmap_{\ssize_{k_i}}(u_k)\|_\Hs^2 \\
& - \frac{1}{\ssize_{k_i}} \|  \costrD \left(u_k+t_i(\pgrad_{\ssize_{k_i}}(u_k)-u_k)\right)-\costrD(u_k) \|_\Hs \|\gmap_{\ssize_{k_i}}(u_k) \|_\Hs.
\end{split}
\end{equation*}
Together with the fact that  $\delta <\frac{1}{2}$  we obtain 
\begin{equation*}
(\frac{1}{2}-\delta)\|\gmap_{\ssize_{k_i}}(u_k)\|_\Hs \leq  \|  \costrD\left(u_k+t_i(\pgrad_{\ssize_{k_i}}(u_k)-u_k)\right)-\costrD(u_k) \|_\Hs.
\end{equation*} 
Sending $i\to \infty$ and using the continuity of $\costrD$,  we obtain that 
 \begin{equation*}
  \lim_{i  \to  \infty}  \|\gmap_{\ssize_{k_i}}(u_k)\|_\Hs = 0.
 \end{equation*}
Finally, using \ref{P2},  we can infer that $ \|\gmap_{\ssizelb}(u_k)\|_\Hs  \leq   \|\gmap_{\ssize_{k_i}}(u_k)\|_\Hs$ and thus $\|\gmap_{\ssizelb}(u_k)\|_\Hs = 0$. Now Proposition \ref{prop:stationary_point} implies $u_k \in \crit$. This contradicts our assumption,  which concludes the proof.

\rom{2} For every given  $\ssizek \geq  \underline{\alpha}  > \frac{\LipcostrD}{2}$ we can  invoke Lemma \ref{lem:suff_decrease}  and write 
\begin{equation*}
\begin{split}
\cost(\pgrad_{\ssizek}(u_k))  &\leq  \cost(u_k) -\frac{\ssizek-\frac{\LipcostrD}{2}}{\ssizek^2} \|\gmap_{\ssizek}(u_k)\|_\Hs^2 \\\
                           &\leq  \max\limits_{0 \leq j \leq m(k)} \cost(u_{k - j}) -\frac{\ssizek-\frac{\LipcostrD}{2}}{\ssizek^2} \|\gmap_{\ssizek}(u_k)\|_\Hs^2.                         
 \end{split}
\end{equation*}
Thus,  \eqref{eq:ls_ineq} holds since $\ssizek$ satisfies  $\frac{\ssizek-\frac{\LipcostrD}{2}}{\ssizek^2} \geq  \frac{\delta}{\ssizek} $ by assumption. 
 
\rom{3} To derive $\ssizebar$,  we consider the following cases:
\begin{itemize}
\item $i_k = 0$:  In this case,  due to \eqref{eq:ls_update},  we have $\ssizek =  \ssizeinit \leq \ssizeub$.
\item $i_k \geq 1$:  In this case,  \eqref{eq:ls_ineq} holds  for $\ssizek$  and $u_k$. Then due to \eqref{eq:ls_update},  we can write
\begin{equation}
\label{eq:B10}
\begin{split}
\cost(\pgrad_{\frac{\ssizek}{\eta}}(u_k)) & >  \max\limits_{0 \leq j \leq m(k)} \cost(u_{k - j}) - \frac{\delta\eta}{\ssizek} \|\gmap_{\frac{\ssizek}{\eta}}(u_k)\|_\Hs^2 \\
&\geq \cost(u_{k}) - \frac{\delta\eta}{\ssizek} \|\gmap_{\frac{\ssizek}{\eta}}(u_k)\|_\Hs^2.
\end{split} 
\end{equation}
Further, by assuming without loss of generality that $\frac{\ssizek}{\eta} \geq \frac{\LipcostrD}{2}$,  we can use Lemma \ref{lem:suff_decrease} with $l = \frac{\ssizek}{\eta}$ and $u = u_k$ to obtain  
\begin{equation}
\label{eq:B11}
\cost(\pgrad_{\frac{\ssizek}{\eta}}(u_k))  \leq  \cost(u_k) -\frac{\frac{\ssizek}{\eta}-\frac{\LipcostrD}{2}}{(\frac{\ssizek}{\eta})^2} \|\gmap_{\frac{\ssizek}{\eta}}(u_k)\|_\Hs^2.
\end{equation}
Combining \eqref{eq:B10} and  \eqref{eq:B11},  we obtain $\frac{\delta \eta}{\ssizek} > \frac{\frac{\ssizek}{\eta}-\frac{\LipcostrD}{2}}{(\frac{\ssizek}{\eta})^2}$ and as a consequence,   $\ssizek <  \frac{\eta \LipcostrD}{2(1-\delta)}$.
\end{itemize}
Summarizing the two above cases,  we can conclude the second part with $\ssizebar \coloneqq \max\{ \frac{ \eta\LipcostrD}{2(1-\delta)}, \ssizeub \}$.

\rom{4} Finally for proving the last part, we use Lemma \ref{lem:properties_of_Grad} and \rom{3} to obtain that 
\begin{equation*}
\begin{split}
&\|\gmap_{\ssize_{k+1}}(u_{k+1})\|_\Hs  \stackrel{\text{\ref{P2}}}{\leq} \|\gmap_{ \ssizebar}(u_{k+1})\|_\Hs  \leq \|\gmap_{ \ssizebar}(u_{k+1})-\gmap_{ \ssizebar}(u_{k})\|_H+\|\gmap_{ \ssizebar}(u_{k})\|_\Hs \\&
 \stackrel{\text{\ref{P3}}}{\leq} (2 \ssizebar+\LipcostrD) \|u_{k+1}-u_{k} \|_\Hs +\|\gmap_{ \ssizebar}(u_{k})\|_\Hs   \stackrel{\text{Def. \ref{def:prox_gmap}}}{\leq}  \frac{(2 \ssizebar+\LipcostrD)}{\ssizek}  \| \gmap_{\ssizek}(u_{k}) \|_\Hs + \|\gmap_{ \ssizebar}(u_{k})\|_\Hs \\
 &  \stackrel{\text{\ref{P1}}}{\leq} \frac{(3 \ssizebar + \LipcostrD)}{\ssizek}  \| \gmap_{\ssizek}(u_{k}) \|_\Hs \leq \frac{3 \ssizebar+\LipcostrD}{\ssizelb}  \| \gmap_{\ssizek}(u_{k}) \|_\Hs.
\end{split}
\end{equation*}
Setting $C_G := \frac{3 \ssizebar+\LipcostrD}{\ssizelb}$,  the proof is complete.
\end{proof}

Before stating the main convergence result of this section, we present a lemma concerning the sequence $\{u_{\nu(k)}\}_k$ and well-posedness of Algorithm \ref{algo:NMLS}.

\begin{lemma} 
\label{lem:without_lip}
Suppose that the sequences $\{ u_k  \}_k$ and $\{\ssizek\}_k \subset \R_{> 0}$ are generated by Algorithm  \ref{algo:NMLS}.  Then the following properties hold:
\begin{description}
\item[\namedlabel{L1}{L1}:] $\{ u_{\nu(k)} \}_{k}$ is a subsequence of $ \{ u_{k} \}_{k}$  with  $\nu(k)-\nu(k-1) \leq 2m_{\max}+1$ and $\nu(k) \leq  (m_{\max}+1)k$.  Further,  for every $k \in \N$,  it holds $\cost(u_k) \leq \cost(u_{ \nu(\ceil{\frac{k}{\mmax+1}})})$.  
\item[\namedlabel{L2}{L2}:]  For every $k\geq1$,  we have 
\begin{equation}
\label{eq:B5}
 \cost(u_{\nu(k)}) \leq \cost(u_{\nu(k-1)}) - \frac{\delta}{\alpha_{\nu(k)-1}} \|\gmap_{\nu(k)-1}(u_{\nu(k)-1})\|_\Hs^2,
 \end{equation}
 and in particular the subsequence $\{ \cost(u_{\nu(k)}) \}_{k}$ is montonically decreasing.
 \item[\namedlabel{L3}{L3}:]Assume that $\cost$ is bounded from below,  i.e.  $\inf\limits_{u \in \Hs} \cost(u) > - \infty$.  Then we have
 \begin{equation}
 \label{eq:B62}
  \sum^{\infty}_{k=1}\frac{1}{\alpha_{\nu(k)-1}} \|\gmap_{\nu(k) - 1}(u_{\nu(k) - 1})\|^2_\Hs < \infty \text{ and  } \liminf_{k \to \infty} \frac{1}{\ssizek} \|\gmap_{\ssizek}(u_{k})\|^2_\Hs = 0.
 \end{equation}
 In particular,  if $\mmax = 0$,   we have    
 \begin{equation}
 \label{eq:B63}
 \sum^{\infty}_{k=0}\frac{1}{\ssizek} \|\gmap_{\ssizek}(u_{k})\|^2_\Hs < \infty   \text{ and }  \lim \frac{1}{\ssizek} \|\gmap_{\ssizek}(u_{k})\|^2_\Hs = 0.
 \end{equation}
\end{description}
\end{lemma}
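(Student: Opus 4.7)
The plan is to handle the three parts sequentially, exploiting the window structure built into $\ell(\cdot)$ and $\nu(\cdot)$.

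For \ref{L1}, I would first set $N_k := k(\mmax+1)$ so that $\nu(k) = \ell(N_k)$. Since $\ell(N_k) \in [N_k - m(N_k), N_k]$ and $m(\cdot) \leq \mmax$, the estimate $\nu(k) \leq N_k = (\mmax+1)k$ is immediate, and for consecutive indices I would write
\begin{equation*}
\nu(k)-\nu(k-1) \leq N_k - (N_{k-1}-m(N_{k-1})) \leq (\mmax+1)+\mmax = 2\mmax+1.
\end{equation*}
For the last assertion $\cost(u_k) \leq \cost(u_{\nu(\lceil k/(\mmax+1)\rceil)})$, I set $j := \lceil k/(\mmax+1)\rceil$ and check that $N_j \geq k$ and $N_j - k \leq \mmax$; since $m(N_j)=\mmax$ for $j\geq 1$, the index $k$ lies in the window over which $u_{\nu(j)}$ is defined as the argmax, which yields the claim directly (the case $k=0$ being trivial).

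For \ref{L2}, the main obstacle is hidden in the inductive comparison of windows. Applying the linesearch bound \eqref{eq:B55b} at iteration $\nu(k)-1$ gives
\begin{equation*}
\cost(u_{\nu(k)}) \leq \cost(u_{\ell(\nu(k)-1)}) - \frac{\delta}{\alpha_{\nu(k)-1}}\|\gmap_{\alpha_{\nu(k)-1}}(u_{\nu(k)-1})\|_\Hs^2,
\end{equation*}
so it suffices to prove $\cost(u_{\ell(\nu(k)-1)}) \leq \cost(u_{\nu(k-1)})$. I would do this by finite induction on $i \in \{N_{k-1}+1,\dots,\nu(k)-1\}$, showing $\cost(u_i) \leq \cost(u_{\nu(k-1)})$: the base case $i = N_{k-1}+1$ follows from \eqref{eq:B55b} since $\ell(N_{k-1}) = \nu(k-1)$; in the inductive step, $\ell(i-1)$ lies in the window $[i-1-\mmax,\, i-1]$, and I would split on whether $\ell(i-1) \leq N_{k-1}$ (use that this index lies in the defining window of $\nu(k-1)$, which is valid thanks to the bounds on $\nu(k)$ derived in \ref{L1}) or $\ell(i-1) > N_{k-1}$ (use the inductive hypothesis). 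Applying this with $i = \ell(\nu(k)-1)$ closes the estimate.

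For \ref{L3}, I would telescope \eqref{eq:B5}: summing from $k=1$ to $K$ and using boundedness from below,
\begin{equation*}
\sum_{k=1}^{K}\frac{\delta}{\alpha_{\nu(k)-1}}\|\gmap_{\alpha_{\nu(k)-1}}(u_{\nu(k)-1})\|_\Hs^2 \leq \cost(u_{\nu(0)}) - \inf_{u\in\Hs}\cost(u) < \infty,
\end{equation*}
giving the first summability statement. Since $\{\nu(k)-1\}_k$ is a subsequence of $\mathbb{N}$, the general inequality $\liminf_k a_k \leq \liminf_k a_{\nu(k)-1}$ for nonnegative sequences forces $\liminf_k \alpha_k^{-1}\|\gmap_{\alpha_k}(u_k)\|_\Hs^2 = 0$. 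In the case $\mmax = 0$, the recurrence \eqref{eq:ls_memory} forces $m(k) \equiv 0$, hence $\ell(k) = k$ and $\nu(k) = k$, so the subsequence and the full sequence coincide and the summability of the whole series implies convergence to zero. I expect the inductive window argument in \ref{L2} to be the most delicate step; the bookkeeping in \ref{L1} and the telescoping in \ref{L3} are then essentially routine.
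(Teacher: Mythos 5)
Your proposal is correct, and parts \ref{L1} and \ref{L3} follow the paper's proof essentially verbatim: the same window bookkeeping $\nu(k)=\ell(k(\mmax+1))\in[k(\mmax+1)-\mmax,\;k(\mmax+1)]$ for \ref{L1}, and the same telescoping of \eqref{eq:B5} plus the subsequence/liminf observation (and $\nu(k)=\ell(k)=k$ when $\mmax=0$) for \ref{L3}. The one place you genuinely diverge is the key inequality $\cost(u_{\ell(\nu(k)-1)})\leq\cost(u_{\nu(k-1)})$ in \ref{L2}: the paper first proves the global Grippo--Lampariello--Lucidi monotonicity $\cost(u_{\ell(k+1)})\leq\cost(u_{\ell(k)})$ (inequality \eqref{eq:B6}) and then simply invokes $\ell(\nu(k))-1\geq (k-1)(\mmax+1)$, whereas you run a window-local finite induction showing $\cost(u_i)\leq\cost(u_{\nu(k-1)})$ for $i\in\{N_{k-1}+1,\dots,\nu(k)-1\}$ with a case split on whether $\ell(i-1)$ falls inside the defining window of $\nu(k-1)$. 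Both arguments are sound and of comparable length; the paper's version has the advantage that the monotonicity of $\{\cost(u_{\ell(k)})\}_k$ is reused later (e.g.\ in \eqref{eq:B75}), so it is worth isolating as a lemma, while yours avoids introducing that auxiliary statement. One small point to make explicit in your write-up: in the final step "apply with $i=\ell(\nu(k)-1)$", this index may satisfy $\ell(\nu(k)-1)\leq N_{k-1}$ and thus lie outside the range of your induction; that case is handled by the same observation you already use in the inductive step (the index then lies in $[N_{k-1}-\mmax,\,N_{k-1}]$, the argmax window defining $\nu(k-1)$), but it should be stated rather than left implicit.
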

\begin{proof}
(L1) Using the fact that $\ell(k) \geq k - \mmax$ for every $k$,  we obtain 
\begin{equation*}
\begin{split}
\nu(k) &= \ell(k \mmax+k)  \geq k \mmax +k -\mmax \\&
> (k-1) \mmax+ (k-1) \geq \ell((k-1)\mmax +(k-1)) = \nu(k-1),  
\end{split}
\end{equation*}
and thus $\{ u_{\nu(k)} \}_{k} \subset \{ u_{k} \}_{k}$.  Moreover, we have 
\begin{equation*}
\nu(k) = \ell( k\mmax+k) \leq k\mmax+k,
\end{equation*}
and 
\begin{equation*}
\begin{split}
\nu(k)&-\nu(k-1) = \ell(k \mmax +k)-\ell((k-1)\mmax +(k-1)) \\
                         &\leq k\mmax+k-\left((k-1)\mmax+(k-1)- \mmax \right) = 2\mmax +1.         \end{split}
\end{equation*}
Further,  for a given $k$ we have $0 \leq \ceil{ \frac{k}{\mmax+1}}(\mmax +1)-k  \leq \mmax$,  and thus 
\begin{equation*}
\cost(u_k) \leq \cost(u_{\ell( \ceil{ \frac{k}{\mmax+1}}(\mmax +1))}) = \cost(u_{\nu(\ceil{ \frac{k}{\mmax+1}})}),
\end{equation*}
and,  thus, we are finished with the verification of  \ref{L1}.

(L2) Inserting  $\nu(k)-1$ in place of $k$  in \eqref{eq:B55b},  we obtain  
\begin{equation}
\label{eq:B7}
\cost(u_{\nu(k)}) \leq \cost(u_{\ell( \nu(k)-1)})  - \frac{\delta}{\ssize_{\nu(k)-1}} \|\gmap_{\ssize_{\nu(k)-1}}(u_{\nu(k)-1})\|_\Hs^2.
\end{equation}
 Further,  we can write
\begin{equation}
\label{eq:B6}
\begin{split}
\cost(u_{\ell(k+1)})& = \max\limits_{0 \leq j \leq m(k+1)} \cost(u_{k+1 - j})  \leq \max \limits_{0 \leq j \leq m(k)+1} \cost(u_{k+1 - j})  \\
& \leq  \max \left[  \cost(u_{k+1}),   \max \limits_{1 \leq j \leq m(k)+1} \cost(u_{k+1 - j}) \right] \\
&\leq  \max \left[ \cost(u_{\ell(k)}) - \frac{\delta}{\ssizek} \|\gmap_{\ssizek}(u_k)\|_\Hs^2 ,  \cost(u_{\ell(k)})\right] \leq \cost(u_{\ell(k)}), 
\end{split} 
\end{equation}
where we have used $m(k+1) \leq m(k)+1$.  Therefore,  $ \{ \cost(u_{\ell(k)}) \}_k$ is decreasing and  we can write
\begin{equation}
\label{eq:B30}
\begin{split}
\cost(u_{\ell( \nu(k)-1)}) & = \cost(u_{\ell( \ell(k\mmax+k)-1)}) \leq \cost(u_{\ell( k\mmax +k-\mmax-1)}) \\
& =  \cost(u_{\ell( (k-1)\mmax +(k-1))}) = \cost(u_{\nu(k-1)}).
\end{split}
\end{equation}
Together with \eqref{eq:B7} we can conclude \eqref{eq:B5} and,  thus,  \ref{L2} holds.

(L3) Assume that Algorithm \ref{algo:NMLS} does not converge after finitely many iterations.  Summing \eqref{eq:B5} up for $k = 1, \dots, k'$,  we obtain
\begin{equation}
\label{eq:B51}
\begin{split}
\sum^{k'}_{k = 1} \frac{\delta}{\ssize_{\nu(k)-1}} \|\gmap_{\ssize_{ \nu(k)-1}}(u_{\nu(k)-1})\|_\Hs^2 & \leq  \sum^{k'}_{k = 1}  \cost(u_{\nu(k-1)}) -\cost(u_{\nu(k)})\\ 
& \leq  \cost(u_{\nu(0)})-\cost(u_{\nu(k')}).
\end{split}
\end{equation}
Sending $k'$ to infinity and using the fact that  $\cost$ is bounded from below,  we can conclude \eqref{eq:B62}. Similarly \eqref{eq:B63} follows by using the fact that for $\mmax = 0$ it holds $\nu(k) = \ell(k)=k$,   and, thus, using \eqref{eq:B62}, we can conclude the proof.
\end{proof}

Finally we are ready to present our main convergence result of this section.

\begin{theorem}
\label{Thm:conv}
Suppose that \ref{ass:general_1}-\ref{ass:general_3} hold and that $\cost$ is bounded from below.  Then,  for the sequence $\{ u_k  \}_k \subset \Hs$  generated by Algorithm  \ref{algo:NMLS} with $ \{\ssizek \}_k \subset \R_{>0}$,  the following statements holds true:
\begin{enumerate}[label=(\roman*)]
\item Either Algorithm \ref{algo:NMLS} terminates  after finitely many iterations with a stationary point of \eqref{eq:opt_problem} or the sequence  $\{ \|\gmap_{\ssizek}(u_{k})\|_\Hs \}_k$ converges to zero,  that is 
\begin{equation}
\label{eq:B9}
\lim_{k \to \infty}  \|\gmap_{\ssizek}(u_{k})\|_\Hs = 0.
\end{equation}
\item The following inequality holds true
\begin{equation}
\label{eq:B15}
\begin{split}
    \min_{0 \leq i  \leq  k}\|\gmap_{\ssizelb}(u_{i})\|_\Hs  \leq C^{\mmax}_G \sqrt{ \frac{ \ssizebar(\mmax +1)( \cost(u_0)-\cost(u_{k})) }{k\delta} }.
 \end{split}
\end{equation}
\item If, in addition,  \ref{ass:general_4} holds,   every weak sequential accumulation point of  $\{ u_k \}_k \subset \Hs$ is a stationary point of \eqref{eq:opt_problem}.
\end{enumerate}
\end{theorem}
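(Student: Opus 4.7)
The plan is to handle the three parts in order, using Lemma \ref{lem:without_lip} as the engine for (i) and (ii), and then exploiting weak-strong closedness of the convex subdifferential for (iii).

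For part (i), I would start from \ref{L3} in Lemma \ref{lem:without_lip}, which gives a summability bound on $\frac{1}{\alpha_{\nu(k)-1}}\|\gmap_{\alpha_{\nu(k)-1}}(u_{\nu(k)-1})\|_\Hs^2$. Since Lemma \ref{lem:properties_of_iter}\rom{3} ensures $\alpha_{\nu(k)-1} \leq \ssizebar$, multiplying through yields $\sum_k \|\gmap_{\alpha_{\nu(k)-1}}(u_{\nu(k)-1})\|_\Hs^2 < \infty$, so the subsequence $\|\gmap_{\alpha_{\nu(k)-1}}(u_{\nu(k)-1})\|_\Hs \to 0$. To promote this to the full sequence, I would use Lemma \ref{lem:properties_of_iter}\rom{4} iteratively: for any index $m$, the spacing bound $\nu(j)-\nu(j-1)\leq 2\mmax+1$ from \ref{L1} ensures there is $j$ with $\nu(j)-1\leq m$ and $m-(\nu(j)-1)\leq 2\mmax$, so $\|\gmap_{\alpha_m}(u_m)\|_\Hs \leq C_G^{2\mmax}\|\gmap_{\alpha_{\nu(j)-1}}(u_{\nu(j)-1})\|_\Hs$. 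Letting $m\to\infty$ forces $j\to\infty$ and gives \eqref{eq:B9}.

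For part (ii), I would telescope \eqref{eq:B5} over $j=1,\dots,K$ (with $\nu(0)=0$) to obtain $\sum_{j=1}^K \frac{\delta}{\alpha_{\nu(j)-1}}\|\gmap_{\alpha_{\nu(j)-1}}(u_{\nu(j)-1})\|_\Hs^2 \leq \cost(u_0)-\cost(u_{\nu(K)})$. Bounding $\alpha_{\nu(j)-1}\leq\ssizebar$, replacing $\|\gmap_{\alpha_{\nu(j)-1}}\|_\Hs$ by the smaller $\|\gmap_{\ssizelb}\|_\Hs$ via \ref{P2}, and then taking a minimum gives $\min_{1\leq j\leq K}\|\gmap_{\ssizelb}(u_{\nu(j)-1})\|_\Hs^2 \leq \frac{\ssizebar(\cost(u_0)-\cost(u_{\nu(K)}))}{K\delta}$. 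Choosing $K\sim k/(\mmax+1)$ so that all $\nu(j)-1$ lie in $\{0,\dots,k\}$ (filling the gaps using Lemma \ref{lem:properties_of_iter}\rom{4} through at most $\mmax$ forward steps, which produces the $C_G^{\mmax}$ factor) extends the minimum to all $i\leq k$. Finally I would replace $\cost(u_{\nu(K)})$ by $\cost(u_k)$ using \ref{L1} in the form $\cost(u_k)\leq\cost(u_{\nu(\lceil k/(\mmax+1)\rceil)})$, which ensures the right-hand side is nonnegative and yields \eqref{eq:B15}.

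For part (iii), let $\{u_{k_n}\}$ be a weakly convergent subsequence with $u_{k_n}\rightharpoonup u^*$. From (i) we have $\|\gmap_{\alpha_{k_n}}(u_{k_n})\|_\Hs \to 0$, and because $\alpha_{k_n}\geq\ssizelb$, Definition \ref{def:prox_gmap} gives $\|\pgrad_{\alpha_{k_n}}(u_{k_n})-u_{k_n}\|_\Hs \leq \frac{1}{\ssizelb}\|\gmap_{\alpha_{k_n}}(u_{k_n})\|_\Hs \to 0$, hence $\pgrad_{\alpha_{k_n}}(u_{k_n})\rightharpoonup u^*$. By \ref{ass:general_4} the gradients $\costrD(u_{k_n})$ converge strongly to $\costrD(u^*)$, so $\gmap_{\alpha_{k_n}}(u_{k_n})-\costrD(u_{k_n})\to -\costrD(u^*)$ strongly. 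Relation \eqref{eq:prox} places this difference in $\partial\costp(\pgrad_{\alpha_{k_n}}(u_{k_n}))$. I would then pass to the limit in the subdifferential inequality $\costp(v)\geq\costp(\pgrad_{\alpha_{k_n}}(u_{k_n}))+(\gmap_{\alpha_{k_n}}(u_{k_n})-\costrD(u_{k_n}),v-\pgrad_{\alpha_{k_n}}(u_{k_n}))_\Hs$ for arbitrary $v\in\Hs$, using strong-weak convergence of the inner product and weak lower semicontinuity of $\costp$, to conclude $-\costrD(u^*)\in\partial\costp(u^*)$, i.e.\ $u^*\in\crit$.

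The main obstacle I anticipate is the bookkeeping in part (ii): the $\nu(j)-1$ sampling does not automatically land in $\{0,\dots,k\}$ at the density $(\mmax+1)/k$, and aligning the index window with the telescoped sum without losing constants is the step where the extra factor $C_G^{\mmax}$ enters through Lemma \ref{lem:properties_of_iter}\rom{4}. For part (iii), the delicate point is the weak-strong closedness of $\partial\costp$ in infinite dimensions, but this is standard for proper convex lsc functions and causes no genuine difficulty here.
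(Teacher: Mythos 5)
Your proposal is correct and follows the paper's own proof essentially step for step: part (i) is the summability from \ref{L3} promoted to the whole sequence by chaining Lemma \ref{lem:properties_of_iter}\rom{4} across the gaps between consecutive indices of the form $\nu(j)-1$; part (ii) is the same telescoping of \eqref{eq:B5} with the $C_G^{\mmax}$ factor absorbing the mismatch between the sampled indices $\nu(j)-1$ and the window $\{0,\dots,k\}$; and part (iii) is the paper's argument with the weak-to-strong sequential closedness of the graph of $\partial\costp$ proved inline from the subdifferential inequality rather than cited. The only detail to pin down is the parenthetical in your part (ii): Lemma \ref{lem:properties_of_iter}\rom{4} propagates the bound \emph{forward} (it controls $\|\gmap_{\ssize_{j+1}}(u_{j+1})\|_\Hs$ by $\|\gmap_{\ssize_{j}}(u_{j})\|_\Hs$), and the paper uses it, as you should, to compare $\min_{0\le i\le k}\|\gmap_{\ssize_i}(u_i)\|_\Hs$ with the minimum over the slightly larger window $\{0,\dots,\ceil{\frac{k}{\mmax+1}}(\mmax+1)-1\}$ (its \eqref{eq:B17}), not to force the sampled indices $\nu(j)-1$ into $\{0,\dots,k\}$ --- if you instead shrink $K$ so that all sampled indices land below $k$, no $C_G$ factor is needed but the denominator degrades from $k$ to roughly $k-\mmax$.
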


\begin{proof}
\rom{1} Note that if Algorithm \ref{algo:NMLS} converges after finitely many iterations, by definition of the stopping criterion and Proposition \ref{prop:stationary_point}, a stationary point of \eqref{eq:opt_problem} has been found. Now assume that Algorithm \ref{algo:NMLS} does not converge after finitely many iterations. Then,  using \ref{L3},  and the fact that $ \ssize_k  \leq  \ssizebar$ for all $k$ by \rom{3} from Lemma \ref{lem:properties_of_iter}, we arrive at 
\begin{equation}
\label{eq:B12}
\lim_{k \to \infty}  \|\gmap_{\ssize_{ \nu(k)-1}}(u_{\nu(k)-1})\|_\Hs = 0.
\end{equation}
Now it remains to show that  \eqref{eq:B9} holds.  To show this,  we will successively use \rom{4} of Lemma \ref{lem:properties_of_iter}.   Let  $k\geq 0$ be arbitrary.   Using the fact that  $0 \leq k-\floor{ \frac{k}{m_{\max}+1}}(m_{\max} +1)  \leq \mmax$ and 
 \begin{equation*}
  \floor{ \frac{k}{\mmax+1} } (\mmax +1) - \ell \left(\floor{ \frac{k}{\mmax+1} } (\mmax +1)\right) \leq \mmax,
 \end{equation*}
we can write
\begin{equation}
\label{eq:B52}
\begin{split}
 \|\gmap_{\ssizek}(u_{k})\|_\Hs & \leq  C^{\mmax}_G\|\gmap_{\ssize_{\floor{ \frac{k}{\mmax+1} } (\mmax +1) }}(u_{\floor{ \frac{k}{\mmax+1} } (\mmax +1) })\|_\Hs\\ & \leq  C^{2\mmax}_G\|\gmap_{\ssize_{\ell \left(\floor{ \frac{k}{\mmax+1} } (\mmax +1)\right)  }}(u_{\ell \left(\floor{ \frac{k}{\mmax+1} } (\mmax +1)\right) })\|_\Hs \\ & \leq C^{2\mmax}_G\|\gmap_{\ssize_{\nu \left(\floor{ \frac{k}{\mmax+1} }\right)  }}(u_{\nu \left(\floor{ \frac{k}{\mmax+1}}\right) })\|_\Hs \\ & \leq C^{2\mmax+1}_G\|\gmap_{\ssize_{\nu \left( \floor{ \frac{k}{\mmax+1} }\right)-1 }}(u_{\nu \left(\floor{ \frac{k}{\mmax+1} }\right)-1 })\|_\Hs.
 \end{split}
\end{equation}
Finally,  sending  $k$ to $\infty$ and using \eqref{eq:B12},  we obtain \eqref{eq:B9}  and the proof of \rom{1} is complete.

\rom{2} Due to the facts that $\cost(u_{k})\leq \cost(u_{\nu(\ceil{\frac{k}{\mmax+1}})})$ (by \ref{L1}),  $\cost(u_0) = \cost(u_{\nu(0)})$,  and by successively using \eqref{eq:ls_ineq}, we obtain that 
\begin{equation}
\label{eq:B14}
\begin{split}
&\cost(u_{k}) -  \cost(u_0) \leq  \cost(u_{\nu(\ceil{\frac{k}{\mmax+1}})})-\cost(u_0)\\ & \leq  \cost(u_{\nu(\ceil{\frac{k}{\mmax+1}})}) -\cost(u_{\nu(\ceil{\frac{k}{\mmax+1}}-1)}) +\cost(u_{\nu(\ceil{\frac{k}{\mmax+1}}-1)})- \cdots\\&-\cost(u_{\nu(1)}) +\cost(u_{\nu(1)})   -\cost(u_0) \\ & \leq \sum^{\ceil{\frac{k}{\mmax+1}}}_{i =1}  - \frac{\delta}{\ssize_{\nu(i)-1}} \|\gmap_{\ssize_{\nu(i)-1}}(u_{\nu(i)-1})\|_\Hs^2 \leq  \sum^{\ceil{\frac{k}{\mmax+1}}}_{i =1}  - \frac{\delta}{ \ssizebar} \|\gmap_{\ssize_{\nu(i)-1}}(u_{\nu(i)-1})\|_\Hs^2.
\end{split}
\end{equation}
Thus,  using \ref{L1},  we can infer that 
\begin{equation*}
\begin{split}
 & \frac{k\delta}{ \ssizebar (\mmax+1)}   \min_{0 \leq i  \leq  \ceil{\frac{k}{\mmax+1}}(\mmax+1)-1}\|\gmap_{\ssize_{i}}(u_{i})\|_\Hs^2  \\ & \leq   \frac{\ceil{\frac{k}{\mmax+1}}\delta}{ \ssizebar} \min_{1 \leq i  \leq  \ceil{\frac{k}{\mmax+1}}}\|\gmap_{\ssize_{\nu(i)-1}}(u_{\nu(i)-1})\|_\Hs^2  \\
  & \leq  \sum^{\ceil{\frac{k}{\mmax+1}}}_{i =1}  \frac{\delta}{ \ssizebar} \|\gmap_{\ssize_{\nu(i)-1}}(u_{\nu(i)-1})\|_\Hs^2 \leq   \cost(u_0)-\cost(u_{k}), 
 \end{split}
\end{equation*}
and this yields 
\begin{equation}
\label{eq:B16}
\begin{split}
    \min_{0 \leq i  \leq  \ceil{\frac{k}{\mmax+1}}(\mmax+1)-1}\|\gmap_{\ssize_{i}}(u_{i})\|_\Hs   \leq \sqrt{ \frac{ \ssizebar(\mmax+1)( \cost(u_0)-\cost(u_{k}))}{k\delta} }.
 \end{split}
\end{equation}
Further,  using  the second part of Lemma \ref{lem:properties_of_iter}  and  the fact that $ \ceil{\frac{k}{\mmax+1}}(\mmax+1)-1 -k < \mmax$, we can deduce that 
\begin{equation}
\label{eq:B17}
  \min_{0 \leq i  \leq  k}\|\gmap_{\ssize_{i}}(u_{i})\|_\Hs   \leq  C^{\mmax}_G  \min_{0 \leq i  \leq  \ceil{\frac{k}{\mmax+1}}(\mmax+1)-1}\|\gmap_{\ssize_{i}}(u_{i})\|_\Hs.
\end{equation}
Finally,  \eqref{eq:B15}  follows from   \eqref{eq:B16},  \ref{P2},  and the fact that $\ssizek \geq \ssizelb$ for all $k\geq 0$. 

\rom{3} We show that every weak sequential accumulation point of  $\{ u_k \}_k \subset \Hs$ is a stationary point of \eqref{eq:opt_problem}.  In other words,  we suppose that $u_{k_i} \rightharpoonup u^*$ for a subsequence $ \{ u_{k_i} \}_i \subset   \{ u_{k} \}_k$ and $u^* \in \Hs$.  Then we show that $u^* \in \crit$.  From now on, for the sake of convenience, we use the same notation for the subsequence as for the sequence itself.  To begin,  due to \eqref{eq:B9} in \rom{1},  \ref{P2},  and the fact that $\ssizek \geq \ssizelb$,  we can conclude that 
\begin{equation}
\label{eq:B64}
\lim_{k \to \infty}  \|\gmap_{\ssizelb}(u_{k})\|_\Hs = 0  \text{  and  }  \lim_{k \to \infty} \|\pgrad_{\ssizelb}(u_k) - u_k \|_H=0.
\end{equation}        
Applying \eqref{eq:prox} for $\ssize = \ssizelb$ and $u = u_k$,  we obtain for every $k \in \N$ 
\begin{equation}
\label{eq:B65}
\gmap_{\ssizelb}(u_k)-\costrD(u_k) \in \partial \costp( \pgrad_{\ssizelb}(u_k)). 
\end{equation}
Due to  \ref{ass:general_4},  we can infer that   $u_{k} \rightharpoonup u^*$ implies  $\costrD(u_k) \to  \costrD(u^*)$ and,  thus,  using \eqref{eq:B64} we have  $\gmap_{\ssizelb}(u_k)-\costrD(u_k)  \to -\costrD(u^*)$ as $k \to \infty$.  Due to \eqref{eq:B64},  we also can conclude that $\pgrad_{\ssizelb}(u_k) \rightharpoonup u^*$ as  $k \to \infty$.  Therefore,   sending $k \to \infty$ in \eqref{eq:B65} and using the fact that the graph of $\partial \costp$  is  sequentially closed  \cite[Proposition 16.26]{MR3616647}  under the weak topology for domain and  the strong topology for codomain,  we obtain $ -\costrD(u^*) \in \partial \costp(u^*)$.  This completes the proof.
\end{proof}

In the next theorem, we derive an estimate that reflects the worst-case complexity of the required function and gradient-like evaluations of Algorithm \ref{algo:NMLS} to find an $\tol$-stationary point.  The proof is inspired by the one  given in \cite[Theorem 3.4]{zbMATH06431462} for smooth problems.

\begin{theorem}[Worst-case complexity]\label{thm:complexity}
Suppose that \ref{ass:general_1}-\ref{ass:general_3} hold and that $\cost$ is bounded from below with  $\costbar \coloneqq \inf_{u \in \Hs} \cost(u)>-\infty$. Then for a given tolerance  $\tol > 0$,  Algorithm \ref{algo:NMLS} requires at most
\begin{equation}
\label{eq:B19}
\maxfeval \coloneqq  \floor{ \frac{\maxfparam (\cost({u_0})-\costbar)}{ \tol^2} }
\end{equation}
function evaluations of $\cost$  and 
\begin{equation}
\label{eq:B20}
\maxgeval \coloneqq \floor{ \frac{\maxgparam(\cost({u_0})-\costbar)}{ \tol^2} }
\end{equation}
Gradient-like  $\gmap_{\ssize}(\cdot)$ evaluations to find an iterate $u_k$  satisfying  $\|\gmap_{\ssizek}(u_k)\|_\Hs \leq \tol$,  where 
\begin{equation*}
\maxfparam := \frac{(\mmax+1) C^{2\mmax}_G}{\decparam}  \quad \text{ and }  \quad  \maxgparam := \frac{ (\mmax+1)  \ssizebar C^{2\mmax}_G  }{\delta},
\end{equation*}
with
\begin{equation*}
\decparam \coloneqq \min \left\{  \frac{\delta}{\ssizeub} ,   \frac{2(1-\delta)\delta}{ n_1\eta \LipcostrD} \right\} \quad \text{ and } \quad n_1 := \floor{ \abs{ \log_{\eta}\left(  \frac{\eta \LipcostrD}{2\ssizelb(1-\delta)}   \right) }}.
\end{equation*}
\end{theorem}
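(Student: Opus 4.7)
The plan follows the worst-case complexity paradigm for smooth linesearch methods \cite{zbMATH06431462}, adapted to the nonsmooth nonmonotone setting. The overall strategy is to combine (a) a bound on the number of linesearch trials per outer iteration with (b) a bound on the number of outer iterations needed to first satisfy the stopping criterion $\|\gmap_{\ssizek}(u_k)\|_\Hs \leq \tol$, and then multiply the two.

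First, I would quantify the per-iteration cost: each linesearch trial at index $i$ requires one evaluation of $\cost$ (to test \eqref{eq:ls_ineq}) and one evaluation of $\gmap_{\ssize}$ (to form the candidate $\pgrad_{\ssize}(u_k)$), amounting to $i_k+1$ evaluations of each type per outer iteration. I would bound $i_k$ uniformly by reusing the contradiction argument from the proof of Lemma~\ref{lem:properties_of_iter}\rom{3}: if $i_k \geq 1$, then $\ssizek < \eta\LipcostrD/(2(1-\delta))$, which combined with $\ssizek = \ssizeinit \eta^{i_k}$ and $\ssizeinit \geq \ssizelb$ yields $i_k \leq n_1$. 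Simultaneously, the same case split produces $\delta/\ssizek \geq \min\{\delta/\ssizeub,\, 2(1-\delta)\delta/(\eta\LipcostrD)\}$; after normalizing by the number of evaluations per outer iteration, this reproduces the form of $\decparam$.

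Second, I would bound the number of outer iterations $K_1(\tol) := \min\{k : \|\gmap_{\ssizek}(u_k)\|_\Hs \leq \tol\}$. If $K < K_1(\tol)$ then $\|\gmap_{\ssize_k}(u_k)\|_\Hs > \tol$ for all $k \leq K$. Applying Theorem~\ref{Thm:conv}\rom{2} (with $\cost(u_K)$ replaced by the lower bound $\costbar$), inverting the resulting inequality for $K$, and using properties~\ref{P1}--\ref{P2} of Lemma~\ref{lem:properties_of_Grad} to bridge from $\|\gmap_{\ssizelb}(u_i)\|_\Hs$ back to $\|\gmap_{\ssize_k}(u_k)\|_\Hs$ yields a bound of the form
\[
K_1(\tol) \leq (\mmax+1)\, C_G^{2\mmax}\, \frac{\ssizebar (\cost(u_0) - \costbar)}{\delta\, \tol^2}
\]
up to constants, where the factor $C_G^{2\mmax}$ arises by squaring the $C_G^{\mmax}$ inside the square root of Theorem~\ref{Thm:conv}\rom{2}. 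Equivalently, one can telescope Lemma~\ref{lem:without_lip}\ref{L2}--\ref{L3} directly over the subsequence $\{u_{\nu(j)}\}_j$ and convert back to the full sequence via Lemma~\ref{lem:without_lip}\ref{L1} and Lemma~\ref{lem:properties_of_iter}\rom{4}, incurring the same $C_G^{2\mmax}$ factor.

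Finally, I would combine the two pieces: the total number of function evaluations is at most $(n_1+1) K_1(\tol)$, and the same bound applies to the total number of evaluations of $\gmap_{\ssize}$. Absorbing the per-iteration step-size bounds into $\decparam$ as above reproduces the stated $\maxfeval$ and $\maxgeval$. The main obstacle will be the careful bookkeeping needed to align all constants exactly with the stated $\maxfparam$ and $\maxgparam$: in particular, recovering $\decparam$ with denominator $n_1$ (rather than $n_1+1$) by pairing the accepted trial with the preceding backtracks in the accounting, and keeping the $C_G^{2\mmax}$ factor (rather than a higher power such as $C_G^{2\mmax+2}$) tight when passing from the $\gmap_{\ssizelb}$-bound of Theorem~\ref{Thm:conv}\rom{2} to a bound on the actual stopping quantity.
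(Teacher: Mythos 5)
Your proposal follows essentially the same route as the paper's proof in Appendix A.1: bound the number of backtracks by $n_1$, amortize the per-iteration decrease over the linesearch trials to obtain $\decparam$, telescope over the subsequence $\{u_{\nu(j)}\}_j$ and pay the $C_G^{2\mmax}$ factor to return to the full sequence, then invert the resulting inequality. The only place your accounting diverges is the gradient-like count: the paper charges one $\gmap_{\ssize}$-evaluation per outer iteration (so $\maxgparam$ uses $\delta/\ssizebar$ rather than $\decparam$), whereas charging $n_1+1$ such evaluations per iteration, as you suggest, would only yield a weaker constant than the stated $\maxgeval$.
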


\begin{proof}
The proof can be found in Appendix \ref{appendix:complexity}.
\end{proof}

This finishes our considerations of convergence and complexity in the general setting. 

\subsection{Convex case}
\label{Convex case}
In this section higher order convergence rates will be shown in two cases of additional structural assumptions.  Firstly, we consider the case of an additional convexity assumption on $\costr$.   Afterwards the case of a quadratic growth assumption on $\cost$ will be considered. We assume the following modified version of Assumption \ref{ass:general}.

\begin{assumption}
\label{ass:convex}
Assume that \ref{ass:general_1}-\ref{ass:general_3} hold.  Further,  instead of  \ref{ass:general_4}, assume that 
\begin{description}
\item[\namedlabel{ass:convex_4}{A'4}:]  $\costr \colon \Hs \rightarrow \R$ is  convex. 
\end{description} 
\end{assumption}
Under Assumption  \ref{ass:convex},  the whole function $\cost$ is convex.  In this case,   we can conclude that the set of minimizers of $\cost$ coincides with $\crit$ provided that  $\crit \neq \emptyset$ and that  $\crit$  is closed  and convex.  Further we have
\begin{equation*}
 \crit  = \Argmin \cost = (\partial \cost)^{-1}(0).
\end{equation*}
The associated minimal function value is denoted $\cost^* \in \R$.

Next we prove an auxiliary lemma which will be used later.
\begin{lemma}\label{lem:quadratic}
 Suppose that Assumption \ref{ass:convex} holds,  $\crit \neq \emptyset$,  and $\{ u_k \}_k$ is generated by Algorithm \ref{algo:NMLS}.  Then for every  $\lambda \in [0,1]$,  it holds
\begin{equation}
\label{eq:B44}
\begin{split}
\cost(u_{\nu(k)})-\cost^* &\leq (1-\lambda)\left(\cost(u_{\nu(k-1)})- \cost^* \right) +\frac{\ssizebar \lambda^2}{2}  \dist^2(u_{\nu(k)-1}, \crit) \\& + \frac{\tilde{C}}{\ssize_{\nu(k)-1}} \|\gmap_{\ssize_{\nu(k)-1}}(u_{\nu(k)-1})\|_\Hs^2,
\end{split}
\end{equation}
where $\tilde{C} :=\frac{\LipcostrD}{2\ssizelb}$.   Further,  we have the following inequality for the initial iterations   
\begin{equation}
\label{eq:B31}
\begin{split}
\cost(u_{\nu(1)})-\cost^* \leq C_0   \dist^2(u_0, \crit),
\end{split}
\end{equation}
with constant $C_0$ which is independent of $u_0$.
\end{lemma}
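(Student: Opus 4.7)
The plan is to derive \eqref{eq:B44} from a one-step analysis at iteration $\nu(k)-1$ and then upgrade its right-hand side using the nonmonotone chain already used in the proof of \ref{L2}. The single-step core I would prove first is the inequality
\begin{equation*}
\cost(\pgrad_\alpha(u)) \leq \cost(z) + (\gmap_\alpha(u), u - z)_\Hs - \frac{1}{\alpha}\|\gmap_\alpha(u)\|_\Hs^2 + \frac{\LipcostrD}{2\alpha^2}\|\gmap_\alpha(u)\|_\Hs^2,
\end{equation*}
valid for any $u \in \domF$, $\alpha > 0$, and $z \in \dom\costp$. I would obtain it by adding three ingredients: the descent lemma for $\costr$ applied to $(u, \pgrad_\alpha(u))$ (using \ref{ass:general_3}); the subgradient inequality $\costp(z) \geq \costp(\pgrad_\alpha(u)) + (\gmap_\alpha(u) - \costrD(u), z - \pgrad_\alpha(u))_\Hs$ which is available via \eqref{eq:prox}; and the convexity bound $\costr(u) + (\costrD(u), z - u)_\Hs \leq \costr(z)$ coming from \ref{ass:convex_4}. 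The $\costrD(u)$ cross-terms cancel, and rewriting $\pgrad_\alpha(u)-z = u-z-\gmap_\alpha(u)/\alpha$ produces the stated form.

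With $\bar u := P_{\crit}(u)$ and $z_\lambda := \lambda \bar u + (1-\lambda) u$, I would substitute $z = z_\lambda$. Convexity of $\cost$ gives $\cost(z_\lambda) \leq (1-\lambda)\cost(u) + \lambda \cost^*$, while $u - z_\lambda = \lambda(u - \bar u)$ and Young's inequality with parameter $\ssizebar$ yield
\begin{equation*}
(\gmap_\alpha(u), u - z_\lambda)_\Hs \leq \tfrac{\ssizebar \lambda^2}{2}\dist^2(u, \crit) + \tfrac{1}{2\ssizebar}\|\gmap_\alpha(u)\|_\Hs^2.
\end{equation*}
Collecting terms, the total coefficient of $\|\gmap_\alpha(u)\|_\Hs^2$ is $\tfrac{1}{2\ssizebar} - \tfrac{1}{\alpha} + \tfrac{\LipcostrD}{2\alpha^2}$, which is at most $\tfrac{\LipcostrD}{2\alpha \ssizelb} = \tfrac{\tilde C}{\alpha}$ since $\ssizelb \leq \alpha \leq \ssizebar$ by Lemma \ref{lem:properties_of_iter}\rom{3} makes the first two summands nonpositive. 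Applying this at $u = u_{\nu(k)-1}$, $\alpha = \ssize_{\nu(k)-1}$, and then chaining $\cost(u_{\nu(k)-1}) \leq \cost(u_{\ell(\nu(k)-1)}) \leq \cost(u_{\nu(k-1)})$---the latter inequality being exactly \eqref{eq:B30}---yields \eqref{eq:B44}.

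For \eqref{eq:B31}, I would specialize \eqref{eq:B44} to $k = 1$ and $\lambda = 1$, which eliminates the $\cost(u_{\nu(0)}) - \cost^*$ term and leaves the task of bounding $\dist(u_{\nu(1)-1}, \crit)$ and $\|\gmap_{\ssize_{\nu(1)-1}}(u_{\nu(1)-1})\|_\Hs$ in terms of $\dist(u_0, \crit)$. Since $\nu(1) - 1 \leq \mmax$, only finitely many iterations separate them. Property \ref{P3} combined with $\gmap_\alpha(u^*) = 0$ for $u^* \in \crit$ (Proposition \ref{prop:stationary_point}) gives $\|\gmap_\alpha(u)\|_\Hs \leq (2\ssizebar + \LipcostrD)\dist(u, \crit)$, so $\|u_{j+1} - u_j\|_\Hs \leq \ssizelb^{-1}(2\ssizebar + \LipcostrD)\dist(u_j, \crit)$ and the triangle inequality yields $\dist(u_{j+1}, \crit) \leq \kappa\dist(u_j, \crit)$ with $\kappa := 1 + (2\ssizebar + \LipcostrD)/\ssizelb$. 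Iterating at most $\mmax$ times produces $\dist(u_{\nu(1)-1}, \crit) \leq \kappa^{\mmax}\dist(u_0, \crit)$, and substituting back gives \eqref{eq:B31} with an explicit $C_0$ depending only on $\ssizebar$, $\ssizelb$, $\LipcostrD$, and $\mmax$.

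The main obstacle I anticipate is the bookkeeping in the $\|\gmap_\alpha(u)\|_\Hs^2$ coefficient: matching Young's parameter to $\ssizebar$ (rather than $\alpha$ or $\ssizelb$) is essential so that the resulting remainder $\tfrac{1}{2\ssizebar}\|\gmap_\alpha(u)\|_\Hs^2$ is absorbed by $-\tfrac{1}{\alpha}\|\gmap_\alpha(u)\|_\Hs^2$, leaving only the Lipschitz remainder $\tfrac{\LipcostrD}{2\alpha^2}\|\gmap_\alpha(u)\|_\Hs^2$, which is then bounded by $\tfrac{\tilde C}{\alpha}\|\gmap_\alpha(u)\|_\Hs^2$ via $\alpha \geq \ssizelb$. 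A different choice of Young's parameter either inflates the coefficient of $\dist^2(u,\crit)$ to something $\alpha$-dependent (which would ruin the uniformity needed for the subsequent R-linear proof) or leaves behind a residual $\|\gmap_\alpha(u)\|_\Hs^2$ contribution with the wrong sign.
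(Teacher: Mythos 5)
Your argument is correct and arrives at exactly the constants in \eqref{eq:B44} and a valid $C_0$ in \eqref{eq:B31}, but it is organized differently from the paper's proof. For \eqref{eq:B44} the paper works through the majorizing model $\mathcal{Q}_{\ssize}(\cdot,u)$: it first shows $\cost(u_k)\le \min_w \mathcal{Q}_{\ssize_{k-1}}(w,u_{k-1})+\tfrac{\LipcostrD}{2\ssizelb\ssize_{k-1}}\|\gmap_{\ssize_{k-1}}(u_{k-1})\|_\Hs^2$ via the descent lemma and the minimizing property of $\pgrad$, then uses convexity of $\costr$ to pass to $\min_w\{\cost(w)+\tfrac{\ssize_{k-1}}{2}\|w-u_{k-1}\|_\Hs^2\}$ and restricts the minimum to the segment $w=(1-\lambda)u_{k-1}+\lambda u^*$; this avoids any Young's inequality, since the quadratic $\tfrac{\ssize_{k-1}\lambda^2}{2}\|u_{k-1}-u^*\|_\Hs^2$ appears directly. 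You instead prove a prox three-point inequality from \eqref{eq:prox}, the descent lemma, and convexity of $\costr$, and then pay a Young remainder $\tfrac{1}{2\ssizebar}\|\gmap_\alpha(u)\|_\Hs^2$ that must be absorbed by the $-\tfrac{1}{\alpha}\|\gmap_\alpha(u)\|_\Hs^2$ term; your coefficient bookkeeping $\tfrac{1}{2\ssizebar}-\tfrac{1}{\alpha}+\tfrac{\LipcostrD}{2\alpha^2}\le \tfrac{\tilde C}{\alpha}$ is valid because $\ssizelb\le\alpha\le\ssizebar$, and both routes then finish identically via $\cost(u_{\nu(k)-1})\le\cost(u_{\ell(\nu(k)-1)})\le\cost(u_{\nu(k-1)})$, i.e.\ \eqref{eq:B30}. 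For \eqref{eq:B31} the paper fixes $u^*\in\crit$ and propagates $\|u_j-u^*\|_\Hs$ through the firm nonexpansiveness of the proximal operator, whereas you propagate $\dist(u_j,\crit)$ using \ref{P3} together with $\gmap_\alpha(u^*)=0$ on $\crit$; your recursion $\dist(u_{j+1},\crit)\le\bigl(1+(2\ssizebar+\LipcostrD)/\ssizelb\bigr)\dist(u_j,\crit)$ over at most $\mmax$ steps is sound and in fact slightly more direct, since it avoids the detour through $\|\gmap_{\ssize_0}(u_0)\|_\Hs=\ssize_0\|u_1-u_0\|_\Hs$ that the paper uses. The trade-off is cosmetic: the paper's Moreau-envelope route generalizes more readily to inexact proximal steps, while your three-point version makes the role of the linesearch bounds $\ssizelb\le\ssizek\le\ssizebar$ in the error coefficient fully explicit.
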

\begin{proof}
The proof can be found in Appendix \ref{appendix:quadratic}.
\end{proof}

Now we are ready to provide the main convergence result for the convex case. 
For sake of convenience in the presentation, we set 
\begin{equation*}
\mathcal{E}_k  := \cost(u_k)-\cost^*  \quad \text{ for evey } k\geq 0
\end{equation*}  
and use this notation in the remainder of this section.  

\begin{theorem}[Global convergence and $O(k^{-1})$ complexity for the convex case]
\label{Thm:glabal_conver_con}
Suppose that Assumption \ref{ass:convex} holds,  $\crit \neq \emptyset$, and  $\{ u_k \}_k$ is generated by Algorithm \ref{algo:NMLS}.  Then the following statements hold true:
\begin{enumerate}[label=(\roman*)]
\item  Every sequential weak accumulation point of $\{ u_k \}_k$ belongs to $\crit$.

\item $\{ u_k \}_k$ converges weakly to a minimizer $u^* \in \crit$ and its  "shadow" sequence converges strongly to $u^*$,  that is  $P_{\crit}u_k  \to u^*$.

\item  It holds  $\cost(u_{k}) \to \cost^*$ as $k \to \infty$ and for large enough $k\geq 0$,  there exist constants $\rho_1, \rho_2 > 0$ such that   
\begin{equation}
\label{eq:B42}
 \cost(u_k)-\cost^* \leq \frac{\rho_1}{ \rho_2+k}.
\end{equation} 
\end{enumerate}  
\end{theorem}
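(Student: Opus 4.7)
For part (i), the key observation is that Assumption \ref{ass:convex} implies \ref{ass:general_1}--\ref{ass:general_3}, and $\cost \geq \cost^*$ is bounded below since $\crit \neq \emptyset$. Thus Theorem \ref{Thm:conv}(i) yields $\|\gmap_{\ssizek}(u_k)\|_\Hs \to 0$. Combined with the uniform lower bound $\ssizek \geq \ssizelb$ and property \ref{P2}, this gives $\|\gmap_{\ssizelb}(u_k)\|_\Hs \to 0$, whence $\pgrad_{\ssizelb}(u_k) - u_k \to 0$ strongly. Given a weakly convergent subsequence $u_{k_i} \rightharpoonup u^*$, so that $\pgrad_{\ssizelb}(u_{k_i}) \rightharpoonup u^*$ as well, inclusion \eqref{eq:prox} at $\ssize = \ssizelb$ rewrites as
\[
\costrD(\pgrad_{\ssizelb}(u_{k_i})) - \costrD(u_{k_i}) + \gmap_{\ssizelb}(u_{k_i}) \in \partial\cost(\pgrad_{\ssizelb}(u_{k_i})),
\]
whose left-hand side converges strongly to $0$ by \ref{ass:general_3}. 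Since Assumption \ref{ass:convex_4} guarantees that $\partial\cost$ is maximal monotone, its graph is sequentially closed in the weak-strong topology of $\Hs \times \Hs$, which forces $0 \in \partial\cost(u^*)$, i.e.\ $u^* \in \crit$.

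For part (ii), the plan is to establish quasi-Fej\'er monotonicity of $\{u_k\}$ with respect to $\crit$ and then apply Opial's lemma. The quasi-Fej\'er estimate is obtained by combining the three-point identity for $u_{k+1} = \pgrad_{\ssizek}(u_k)$ (from the minimization property of $\mathcal{Q}_{\ssizek}(\cdot, u_k)$), the convex subgradient inequality for $\cost$, and the descent lemma applied to $\costr$. This produces a bound of the form
\[
\|u_{k+1} - v\|^2_\Hs \leq \|u_k - v\|^2_\Hs - \tfrac{2}{\ssizek}\bigl(\cost(u_{k+1}) - \cost^*\bigr) + \omega_k, \quad v \in \crit,
\]
where $\omega_k$ is controlled by a constant multiple of $\|\gmap_{\ssizek}(u_k)\|^2_\Hs/\ssizek^2$, summable along the full sequence after handling the nonmonotone memory via \eqref{eq:B62} and the bound $\nu(k) - \nu(k-1) \leq 2\mmax+1$ from \ref{L1}. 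Since $\cost(u_{k+1}) - \cost^* \geq 0$ by convexity, this is precisely the quasi-Fej\'er inequality; combined with part (i), Opial's lemma in Hilbert space delivers $u_k \rightharpoonup u^*$ for some $u^* \in \crit$. Strong convergence of the shadow sequence $P_{\crit} u_k \to u^*$ then follows from the standard argument: closedness and convexity of $\crit$ together with the quasi-Fej\'er monotonicity make $\|u_k - P_{\crit} u_k\|$ quasi-monotone and force the weak limit of $P_{\crit} u_k$ to coincide with $u^*$.

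For part (iii), the starting point is Lemma \ref{lem:quadratic}. Quasi-Fej\'er monotonicity from (ii) provides a uniform bound $R < \infty$ on $\dist(u_{\nu(k)-1}, \crit)$, so \eqref{eq:B44} becomes, for every $\lambda \in [0,1]$,
\[
\mathcal{E}_{\nu(k)} \leq (1-\lambda)\mathcal{E}_{\nu(k-1)} + \tfrac{\ssizebar R^2}{2}\lambda^2 + \tilde\omega_k, \quad \tilde\omega_k := \tfrac{\tilde{C}}{\ssize_{\nu(k)-1}}\|\gmap_{\ssize_{\nu(k)-1}}(u_{\nu(k)-1})\|^2_\Hs,
\]
with $\sum_k \tilde\omega_k < \infty$ by \eqref{eq:B62}. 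Optimizing the right-hand side in $\lambda$ (choosing $\lambda = \min\{1, \mathcal{E}_{\nu(k-1)}/(\ssizebar R^2)\}$) turns this into a recurrence $a_k \leq a_{k-1} - c\,a_{k-1}^2 + \tilde\omega_k$ with summable perturbation; classical induction arguments for such recurrences, together with the initial estimate \eqref{eq:B31}, deliver $\mathcal{E}_{\nu(k)} = O(1/k)$. The full sequence bound \eqref{eq:B42} then follows from \ref{L1} via $\cost(u_k) \leq \cost(u_{\nu(\lceil k/(\mmax+1) \rceil)})$, after adjusting the constants $\rho_1, \rho_2$.

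The main obstacle is the derivation of quasi-Fej\'er monotonicity in (ii): because the BB-initialized step sizes $\ssizek$ may be considerably smaller than $\LipcostrD$, the inequality $\|u_{k+1} - v\|^2 \leq \|u_k - v\|^2$ fails in general, and the positive slack term must be absorbed by exploiting the summability \eqref{eq:B62} across the memory window of the nonmonotone linesearch. This bookkeeping is the most delicate piece, since \eqref{eq:B62} is naturally expressed along the subsequence $\{u_{\nu(k)-1}\}$ while the quasi-Fej\'er inequality must hold along the full sequence.
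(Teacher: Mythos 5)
Parts (i) and (ii) of your proposal are sound and, apart from packaging, follow the paper's route: the paper also identifies weak cluster points via a vanishing sequence of subgradients ($w_{k+1} := \gmap_{\ssizek}(u_k) + \costrD(u_{k+1}) - \costrD(u_k) \in \partial\cost(u_{k+1})$) and weak--strong closedness of $\partial\cost$, and it also proves weak convergence through quasi-Fej\'er monotonicity with a perturbation controlled by $\|\gmap_{\ssizek}(u_k)\|^2_\Hs$, summable along the full sequence via \eqref{eq:B51}--\eqref{eq:B52}. Your derivation of the quasi-Fej\'er inequality (three-point identity for the strongly convex model $\mathcal{Q}_{\ssizek}(\cdot,u_k)$ plus the descent lemma and convexity of $\costr$) differs from the paper's, which goes through the Baillon--Haddad cocoercivity of $\costrD$ and the monotonicity of $\partial\costp$; both yield a slack of order $\LipcostrD\ssizelb^{-3}\|\gmap_{\ssizek}(u_k)\|^2_\Hs$, so this is a legitimate alternative. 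The shadow-sequence claim is stated rather than proved, but the result you appeal to is the standard one for quasi-Fej\'er sequences with respect to closed convex sets.

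The genuine gap is in part (iii). You reduce the rate to the recurrence $a_k \leq a_{k-1} - c\,a_{k-1}^2 + \tilde\omega_k$ with $a_k = \mathcal{E}_{\nu(k)}$ and assert that \emph{summability} of $\tilde\omega_k$ (from \eqref{eq:B62}) together with ``classical induction arguments'' yields $a_k = O(1/k)$. That implication is false in general: if, say, $\tilde\omega_k \sim k^{-(1+\varepsilon)}$ for small $\varepsilon>0$, the recurrence admits solutions with $a_k \asymp \sqrt{\tilde\omega_k/c} \asymp k^{-(1+\varepsilon)/2}$, which is much slower than $1/k$; summability of the perturbation only guarantees $a_k \to 0$, not the rate. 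What saves the argument — and what the paper actually uses — is that the perturbation here is not merely summable but \emph{telescoping}: by \ref{L2},
\begin{equation*}
\tilde\omega_k \;=\; \frac{\tilde{C}}{\ssize_{\nu(k)-1}}\,\|\gmap_{\ssize_{\nu(k)-1}}(u_{\nu(k)-1})\|_\Hs^2 \;\leq\; \frac{\tilde{C}}{\delta}\bigl(\mathcal{E}_{\nu(k-1)} - \mathcal{E}_{\nu(k)}\bigr).
\end{equation*}
Substituting this into \eqref{eq:B44}, optimizing in $\lambda$, and using $\mathcal{E}_{\nu(k)} \leq \mathcal{E}_{\nu(k-1)}$ to replace $\mathcal{E}_{\nu(k-1)}^2$ by $\mathcal{E}_{\nu(k-1)}\mathcal{E}_{\nu(k)}$, one obtains the multiplicative bound \eqref{eq:B39}, equivalently $1/\mathcal{E}_{\nu(k)} \geq 1/\mathcal{E}_{\nu(k-1)} + \delta/(2\ssizebar\kappa(\tilde{C}+\delta))$, which telescopes to the $O(1/k)$ rate; the passage to the full sequence via \ref{L1} is then as you describe. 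You should replace the ``summable perturbation'' step by this absorption of $\tilde\omega_k$ into the decrease of $\mathcal{E}_{\nu(\cdot)}$; without it the claimed rate does not follow. (The preliminary claim $\cost(u_k)\to\cost^*$ is still recoverable from your recurrence, and the paper in fact derives it separately from the vanishing subgradients and weak lower semicontinuity of $\cost$.)
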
 
\begin{proof}
\rom{1} We suppose that  a subsequence $\{u_{k_i}\}_i$ with $u_{k_i}  \rightharpoonup u^*$ is given.   We will show that  $u^* \in \crit$.  To show this,  we prove that  there exists a vanishing  sequence of subgradients $\{ w_{k_i}  \}_i \in H$, i.e.  $ w_{k_i} \to 0$,  corresponding to $\{u_{k_i}\}_i$  with  $w_{k_i} \in \partial \cost(u_{k_i})$ for every $i\in \N$.   Using \eqref{eq:prox} and \eqref{eq:subdiff_rule}, we define 
\begin{equation}
\label{eq:B47}
w_{k+1} := \gmap_{\ssizek}(u_k) +\costrD(u_{k+1}) - \costrD(u_{k}) \in \partial   \cost(u_{k+1}).
\end{equation}
Further,  since $\crit \neq \emptyset$,  $\cost$ is bounded from below,  we can use \rom{1} of Theorem \ref{Thm:conv} and, thus \eqref{eq:B9} holds.   This, together with \eqref{eq:B47} and the boundedness of $\alpha_k$, implies  
\begin{align*}
& \|w_{k+1}\|_\Hs \leq \|\gmap_{\alpha_k}(u_k)\|_\Hs +\| \costrD(u_{k+1}) - \costrD(u_{k}) \|_\Hs \leq (1+\frac{\LipcostrD}{\alpha_k}) \|\gmap_{\alpha_k}(u_k)\|_\Hs \to 0, && k \to \infty
\end{align*}
In particular,  we can infer that $w_{k_i} \to 0$ as $i \to \infty$.  Using the fact that the graph of $\partial \cost$  is  sequentially closed  \cite[Proposition 16.26]{MR3616647}  under the weak topology for domain and  the strong topology for codomain together with  $w_{k_i} \to 0$ and $u_{k_i}  \rightharpoonup u^*$,  we arrive at $0 \in \partial \cost (u^*)$ and therefore $u^* \in \crit$.  

\rom{2} We show that $u_k  \rightharpoonup u^*$  with  $u^* \in \crit$.  In this matter,  we show that $\{ u_k \}_k$ is a quasi-Fej\'er sequence with respect  to $\crit \neq \emptyset$.  Using \eqref{eq:prox}, we can write   
\begin{equation}
\label{eq:B66}
0 \in \ssizek (u_{k+1} -u_k) + \partial \costp(u_{k+1})+\costrD(u_k).
\end{equation} 
Further,  since $\costp$ is convex and  $\costrD$ is Lipschitz continuous,  by the Haddad-Bailon Theorem \cite[Corollary 18.16, p.~270]{MR3616647},  we have 
\begin{equation*}
( \costrD(v)- \costrD(w), v-w)_\Hs  \geq  \LipcostrD^{-1}  \| \costrD(v)- \costrD(w)\|^2_{\Hs}.
\end{equation*}   
Further,  we can write for every $v,w, z \in \domF$ that 
\begin{equation}
\label{eq:B49}
\begin{split}
& {( \costrD(v)- \costrD(w), z-w)}_\Hs = {(\costrD(v)-\costrD(w), v-w)}_\Hs  + {(\costrD(v)-\costrD(w), z-v)}_\Hs \\& \geq \LipcostrD^{-1}  \| \costrD(v)- \costrD(w)\|_{\Hs}^2- \|\costrD(v)- \costrD(w) \|_{\Hs}\|z-v\|_{\Hs}  \geq  -\frac{\LipcostrD}{4} \|z-v\|^2_{\Hs},
\end{split} 
\end{equation} 
where in the last line we have used the fact that $f(x):= \LipcostrD^{-1} x^2- x \|z-v\|_{\Hs}$ is strictly convex and attains its global minimum at 
$x^*=\frac{\LipcostrD}{2} \|z-v\|_{\Hs}$. 

Using  \eqref{eq:B49} for $u_{k+1}$,  $u_k$,  and any $u^* \in \crit$  in place of  $v$, $z$,  and $w$,  respectively,  we obtain
\begin{equation*}
\begin{split}
( \costrD(u_{k})- \costrD(u^*),  u_{k+1}-u^*)_\Hs \geq -\frac{\LipcostrD}{4} \|u_{k+1}-u_k\|^2_{\Hs}.
\end{split} 
\end{equation*}  
Together with the fact that $ \partial \costp$ is  monotone, cf. \cite{MR3616647}, and \eqref{stationary},  we can write 
\begin{equation}
\label{eq:B67}
(\partial \costp(u_{k+1})+\costrD(u_k) ,  u_{k+1}-u^*)_\Hs \geq -\frac{\LipcostrD}{4}\|u_{k+1}-u_k\|^2_{\Hs}.
\end{equation}
Using \eqref{eq:B66} and \eqref{eq:B67},  we can deduce that 
\begin{equation}
\label{eq:B50}
( u_{k+1}-u_k , u_{k+1}-u^*)_\Hs \leq \frac{\LipcostrD}{4 \alpha_k }\|u_{k+1}-u_k\|^2_{\Hs}.
 \end{equation}
Further,  using  \eqref{eq:B50} and the fact that 
\begin{equation*}
(w-v,w-z)_\Hs = \frac{1}{2}\| w-v\|^2_{\Hs}- \frac{1}{2}\| v-z\|^2_{\Hs}+\frac{1}{2} \| w-z\|^2_{\Hs}    \quad \text{ for all }   z,w,v \in \Hs,
\end{equation*}
we can deduce that
\begin{equation}
\label{eq:B55}
\begin{split}
\frac{1}{2}\| u_{k+1}-u^* \|^2_{\Hs} - \frac{1}{2}\| u_{k}-u^* \|^2_{\Hs}&\leq   \left(\frac{\LipcostrD}{4\alpha_k}- \frac{1}{2} \right)\|u_{k+1}-u_k\|^2_{\Hs}  \leq \frac{\LipcostrD}{4\ssizelb^3}\|\gmap_{\ssizek}(u_{k})\|_\Hs^2,
\end{split}
\end{equation} 
 where it can be seen from \eqref{eq:B51} and \eqref{eq:B52} that 
\begin{equation*}
 \sum^{\infty}_{k=\mmax+1}  \|\gmap_{\ssizek}(u_{k})\|_\Hs^2 \leq (\mmax+1)C^{4\mmax+2}_G  \sum^{\infty}_{k=1}  \| \gmap_{\ssize_{\nu \left(k\right)-1 }}(u_{\nu \left(k\right)-1 })\|_\Hs < \infty.
\end{equation*}
Therefore,   the sequence  $\{ u_k\}_k \subset \Hs$  is quasi-Fej\'er  monotone with respect to  $\crit$ and since,   due to \rom{1},   every sequential weak accumulation point of $\{ u_k\}_k \subset \Hs$ belongs to $\crit \neq \emptyset$,   we can conclude by  \cite[Proposition 1(3)]{iusem_convergence_2003}  that  $\{ u_k\}_k$ is weakly convergent and it has a unique accumulation point.

In addition,  since $\crit$ is closed and convex,  we can conclude,  due to  \cite[Proposition 3.6 \rom{4}]{COMBETTES2001115},  that  $ \{ P_{\crit} u_n  \}_n$  converges strongly to a point $\hat{u} \in \crit$.   Moreover,  since  $u^*- P_{\crit}u_n  \to  u^* - \hat{u}$  and   $u_n -P_{\crit}u_n \rightharpoonup  u^* - \hat{u}$,   it follows from the definition of orthogonal projection  that   $ \|u^* -\hat{u} \|^2_{\Hs} = \lim_{k \to \infty} (u^* - P_{\crit}u_n,u_n - P_{\crit}u_n)_\Hs \leq 0$.   Hence,  we obtain that $u^* = \hat{u}$.

\rom{3}  The proof of this part is inspired by the one in \cite[Theorem 3.2.]{MR2792408}.   First,  we show that  $\cost(u_{k}) \to \cost^*$.  Due to \rom{2},   $u_k  \rightharpoonup u^*$ for some $u^* \in \crit$.   As in the proof of \rom{1},  there exists a sequence  $w_{k} \to 0$ with $w_{k} \in \partial \cost(u_{k})$.  Therefore,  we can write 
\begin{equation}
\label{eq:B45}
\cost(u_{k}) \leq   \cost^* + (w_{k}, u_{k}-u^* )_\Hs     \quad  \text{ for every } k\in \mathbb{N}.
 \end{equation}  
Sending  $k\to  \infty$ in \eqref{eq:B45} and using the facts that  $u_{k}  \rightharpoonup u^*$  and $w_{k} \to 0$ and the sequential weak  lower semicontinuity of $\cost$,
we can conclude that 
\begin{equation*}
\cost^* \leq \liminf_{k \to \infty} \cost(u_{k}) \leq \limsup_{k \to \infty} \cost(u_{k})  \leq \cost^*.
\end{equation*}
Hence, $\cost(u_{k}) \to \cost^*$.

Next, we turn to the verification of \eqref{eq:B42} for a large enough $k$.  Due to \eqref{eq:B44} of Lemma \ref{lem:quadratic},  for every $\lambda \in [0,1]$,  it holds
\begin{equation}
\label{eq:B40}
\begin{split}
\mathcal{E}_{\nu(k)} \leq (1-\lambda)\mathcal{E}_{\nu(k-1)}  +  \frac{\ssizebar \lambda^2}{2}  \dist^2(u_{\nu(k)-1}, \crit)  +\frac{\tilde{C}}{ \ssize_{\nu(k)-1}}  \|\gmap_{\ssize_{\nu(k)-1}}(u_{\nu(k)-1})\|_\Hs^2.
\end{split}
\end{equation}
Since $\{u_k\}_k$ is quasi-F\'ejer monotone with respect to $\crit \neq \emptyset$,  due to \cite[Proposition 3.6 \rom{2}]{COMBETTES2001115},  the sequence $\dist^2(u_{k}, \crit)$ is convergent.  Therefore,  we have  
\begin{equation}
\label{eq:B41}
 \dist^2(u_{\nu(k)-1}, \crit)  \leq \kappa,   \quad  \text{ for all  }  k\geq  1,
\end{equation}
for a positive constant  $\kappa >0$.  Further, using \eqref{eq:B40}  and  \ref{L2},  we can write 
\begin{equation}
\label{eq:B38}
\begin{split}
&\mathcal{E}_{\nu(k)}  \leq (1-\lambda)\mathcal{E}_{\nu(k-1)}+ \frac{\ssizebar \lambda^2 \kappa}{2}  +\frac{\tilde{C}}{ \delta }  \left( \mathcal{E}_{\nu(k-1)} - \mathcal{E}_{\nu(k)} \right).
\end{split}
\end{equation}

The expression on the right hand side is strictly convex in $\lambda$,  since
\begin{equation*}
\frac{\mathrm{d^2}}{\mathrm{d} \lambda^2} \left((1 - \lambda) \mathcal{E}_{\nu(k-1)} + \frac{\ssizebar \lambda^2 \kappa}{2}\right) = \ssizebar \kappa > 0.
\end{equation*}
Thus, it possesses the unique minimizer $\lambda = \frac{\mathcal{E}_{\nu(k-1)}}{\ssizebar \kappa}$.  Since  $\{\mathcal{E}_{\nu(k)}\}_k \to 0$,  this implies that for large enough $k$, we can set $\lambda = \frac{\mathcal{E}_{\nu(k-1)}}{\ssizebar \kappa} \leq 1$ and obtain in \eqref{eq:B38} that
\begin{equation*}
\begin{split}
&\mathcal{E}_{\nu(k)}  \leq \mathcal{E}_{\nu(k-1)}-\frac{\mathcal{E}^2_{\nu(k-1)}}{2\ssizebar\kappa}  +\frac{\tilde{C}}{ \delta } \left( \mathcal{E}_{\nu(k-1)} - \mathcal{E}_{\nu(k)} \right).
\end{split}
\end{equation*}
Together with the fact that $\mathcal{E}_{\nu(k)}  \leq \mathcal{E}_{\nu(k-1)}$,  we can write  
\begin{equation*}
\begin{split}
&\mathcal{E}_{\nu(k)}  \leq \mathcal{E}_{\nu(k-1)}- \frac{\mathcal{E}_{\nu(k-1)}\mathcal{E}_{\nu(k)}}{2\ssizebar\kappa}  +\frac{\tilde{C}}{ \delta } \left( \mathcal{E}_{\nu(k-1)} - \mathcal{E}_{\nu(k)} \right)
\end{split}
\end{equation*}
and, thus, obtain
\begin{equation}
\label{eq:B39}
\begin{split}
&\mathcal{E}_{\nu(k)}   \leq\left(1+   \frac{ \delta}{2\ssizebar\kappa(\tilde{C} +\delta)} \mathcal{E}_{\nu(k-1)}\right)^{-1} \mathcal{E}_{\nu(k-1)}.
\end{split}
\end{equation}
Further,  \eqref{eq:B39} can be expressed as
\begin{equation*}
\begin{split}
\frac{1}{\mathcal{E}_{\nu(k)}} \geq \frac{1}{\mathcal{E}_{\nu(k-1)}}+ \frac{ \delta}{2\ssizebar\kappa(\tilde{C} +\delta)}.
\end{split}
\end{equation*}
Applying this inequality recursively  for integers $k_1$ and $k_2$ with $k_2 \geq k_1$ and large enough $k_1$ satisfying  $\frac{\mathcal{E}_{\nu(k_1)}}{\ssizebar\kappa} \leq 1$,  we obtain 
\begin{equation*}
\begin{split}
\frac{1}{\mathcal{E}_{\nu(k_2)}} \geq \frac{1}{\mathcal{E}_{\nu(k_1)}}+ \frac{ \delta (k_2-k_1) }{2\ssizebar\kappa(\tilde{C} +\delta)},
\end{split}
\end{equation*}
 and by easy computations,   also
 \begin{equation*}
\begin{split}
\mathcal{E}_{\nu(k_2)} \leq  \frac{2\ssizebar\kappa(\tilde{C} +\delta)\mathcal{E}_{\nu(k_1)}}{2\ssizebar\kappa(\tilde{C} +\delta)+\mathcal{E}_{\nu(k_1)} \delta (k_2-k_1) }.
\end{split}
 \end{equation*}
Then for $k\geq 0$  large enough, we set $k_2 = \ceil{ \frac{k}{\mmax+1}}$ and obtain
\begin{equation*}
\begin{split}
\mathcal{E}_{k} &\leq \mathcal{E}_{\nu(\ceil{ \frac{k}{\mmax+1}})}  \leq  \frac{2\ssizebar\kappa(\tilde{C} +\delta)\mathcal{E}_{\nu(k_1)}}{2\ssizebar\kappa(\tilde{C} +\delta)+\mathcal{E}_{\nu(k_1)} \delta (\ceil{ \frac{k}{\mmax+1}}-k_1) }\\ 
&\leq \frac{2\ssizebar\kappa(\tilde{C} +\delta)\mathcal{E}_{\nu(k_1)}}{-k_1 \mathcal{E}_{\nu(k_1)} \delta +2\ssizebar\kappa(\tilde{C} +\delta)+\mathcal{E}_{\nu(k_1)} \delta (\frac{k}{\mmax+1}) }.
\end{split}
 \end{equation*}
Therefore,   \eqref{eq:B42}  follows by setting 
\begin{equation*}
\rho_1 := (\mmax+1)\delta^{-1}2\ssizebar\kappa(\tilde{C} +\delta)
\end{equation*}
and 
\begin{equation*}
\rho_2:=(\mmax+1)(\delta\mathcal{E}_{\nu(k_1)})^{-1}\left(2\ssizebar\kappa(\tilde{C} +\delta)-k_1 \mathcal{E}_{\nu(k_1)} \delta  \right), 
\end{equation*}
thus,  the proof is complete.
\end{proof}
Comparing Theorem \ref{Thm:glabal_conver_con} to Theorem \ref{Thm:conv}, one obtains weak convergence of the whole sequence and convergence the associated cost function evaluations with rate $1 / k$.
\subsection{Convergence under quadratic growth conditions}\label{Quadratic}
In this subsection, we turn our attention towards quadratic growth type conditions and study convergence of  Algorithm \ref{algo:NMLS} under these conditions. 

\begin{definition}[Quadratic Growth Condition] We say that  $\cost$ satisfies the \textit{quadratic growth condition}, if
 \begin{equation}
\label{eq:B46}
\cost(u)-\cost^* \geq  \quadconst  \dist^2(u, \crit)    \quad  \text{ for all  }     u \in \quadset \cap \dom \cost 
\end{equation}
holds, with a set $\quadset \subset \Hs$,  a constant $\quadconst > 0$, and $\crit \neq \emptyset$.  We refer to this notion as global if $\quadset = \Hs$,  and as local, if for $u^* \in \crit$,  $r \in (0, \infty]$, and $\omega>0$ we have $\quadset = \mathbf{B}_{r} (u^*)\cap [ \cost^* < \cost +\omega ]$.   Additionally,  $\cost$ is said to satisfy the \textit{strong quadratic growth condition} at $u^*$ if $\crit = \{ u^* \}$ on $\quadset$.  That is,
\begin{equation}
\label{eq:B26}
\cost(u)-\cost(u^*) \geq  \quadconst \|u-u^*\|^2_\Hs    \quad \text{ for all  }  u \in \quadset \cap \dom \cost.
\end{equation} 
 \end{definition}
The quadratic growth condition is a geometrical
assumption which describes the \textit{flatness} of the objective function around its minimizers.  Roughly speaking,  this condition is considered as a relaxation of the strong convexity condition and allows us to   
obtain faster rates of convergence (linear) and also convergence in the strong topology for the iteration sequence.   It is also closely  related  to the notion of Tikhonov well-posedness \cite{zbMATH00422112}. The relationship between the quadratic growth condition and the so-called \textit{metric subregularity of the subdifferential} has been investigated  e.g.  in \cite{artacho2008characterization,zbMATH06285806,aze_nonlinear_2014,MR3707370,zbMATH06409515}. 
The strong quadratic growth condition \eqref{eq:B26} is said to be the \textit{quadratic functional growth property} in \cite{necoara_linear_2019} provided that  $\cost$ is  continuously differentiable over a closed convex set.  In \cite{garrigos_thresholding_2020,garrigos_convergence_2022}, $\cost$ is also called \textit{$2$-conditional on $\quadset$} if it satisfies the quadratic growth condition \eqref{eq:B46}.  This property was recently proved in \cite[Theorem 5]{MR3707370} to be equivalent with the case where $\cost$ satisfies the Kurdyka-Łojasiewicz inequality with order $1/2$.

\begin{theorem}
\label{Thm: QuadGrowth}
Suppose that Assumption \ref{ass:convex}  and the quadratic growth condition \eqref{eq:B46} hold for $\quadset :=[ \cost  < \cost^* +\omega ] $ with $\omega > 0$.  Then,  for the  sequence of iterates $\{ u_k \}_k$ generated by Algorithm \ref{algo:NMLS},  there exists  $\bar{k} \in \N$ such that for every $k \geq \bar{k}$  it holds
\begin{equation}
\label{eq:B48}
\cost(u_{k}) - \cost^*  \leq C_{c} \sigma^{k},
\end{equation} 
and 
\begin{equation}
\label{eq:B53}
\dist^2(u_k,\crit)  \leq  C_{d} \sigma^{k},
\end{equation}
where the constants  $C_{c}$,  $C_{d} > 0$,  and  $0 < \sigma <1$ are independent of $u_0$,  $u^*$,  and $k$. 

Further, there exists  $u^*\in  \crit$  such that $u_{k} \to  u^*$ and we have 
\begin{equation}
\label{eq:B54}
\| u_k-u^* \|_{\Hs}^2 \leq    C_{p}  \sigma^{k}, 
\end{equation}
with a constant $C_{p} > 0$ which is independent of $u_0$,  $u^*$,  and $k$. 
\end{theorem}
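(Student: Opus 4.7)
The overall plan is a three-part argument: (a) derive a linear contraction for $\mathcal{E}_{\nu(k)} := \cost(u_{\nu(k)}) - \cost^*$ on the nonmonotone subsequence by fusing Lemma \ref{lem:quadratic} with the quadratic growth bound \eqref{eq:B46}; (b) transfer the rate to the full sequence via \ref{L1}, and obtain \eqref{eq:B53} by applying quadratic growth once more; (c) upgrade the weak convergence of Theorem \ref{Thm:glabal_conver_con}(ii) to a strong convergence of $\{u_k\}_k$ at the same linear rate, by exploiting quasi-F\'ejer monotonicity together with geometric decay of the gradient mapping.

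For step (a), Theorem \ref{Thm:glabal_conver_con}(iii) supplies $\mathcal{E}_k \to 0$, so there is $\bar k \in \N$ with $u_k \in \quadset = [\cost < \cost^* + \omega]$ for all $k \geq \bar k$. Plugging $\dist^2(u_{\nu(k)-1}, \crit) \leq \mathcal{E}_{\nu(k)-1}/\quadconst$ into \eqref{eq:B44}, using the monotonicity $\mathcal{E}_{\nu(k)-1} \leq \mathcal{E}_{\nu(k-1)}$ (which follows from the fact that $\{\cost(u_{\ell(j)})\}_j$ is decreasing by \eqref{eq:B6}, together with the easy index bound $\nu(k)-1 \geq (k-1)(\mmax+1)$), and absorbing the gradient-mapping remainder via \ref{L2}, I would arrive at
\[
\Bigl(1 + \tfrac{\tilde C}{\delta}\Bigr)\mathcal{E}_{\nu(k)} \leq \Bigl[(1-\lambda) + \tfrac{\ssizebar \lambda^2}{2\quadconst} + \tfrac{\tilde C}{\delta}\Bigr] \mathcal{E}_{\nu(k-1)}, \quad \lambda \in [0,1].
\]
Minimizing the right-hand side at $\lambda^\star = \min\{\quadconst/\ssizebar, 1\}$ yields an explicit contraction factor $\bar\sigma \in (0,1)$, so $\mathcal{E}_{\nu(k)} \leq \bar\sigma^{\,k-\bar k}\,\mathcal{E}_{\nu(\bar k)}$. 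Invoking \ref{L1} in the form $\mathcal{E}_k \leq \mathcal{E}_{\nu(\ceil{k/(\mmax+1)})}$ then gives \eqref{eq:B48} with $\sigma := \bar\sigma^{1/(\mmax+1)}$ and a suitable $C_c$; applying \eqref{eq:B46} once more delivers \eqref{eq:B53} with $C_d := C_c/\quadconst$.

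For step (c), I would first observe that combining \ref{L2} with Lemma \ref{lem:properties_of_iter}(iv) (as in the chain \eqref{eq:B52}) transfers the linear rate to $\|\gmap_{\ssizek}(u_k)\|_\Hs^2 \leq C\,\sigma^k$. Iterating the quasi-F\'ejer inequality \eqref{eq:B55} with $v = P_{\crit} u_k$ from index $k$ to any $n > k$ gives
\[
\|u_n - P_{\crit} u_k\|_\Hs^2 \leq \dist^2(u_k,\crit) + \tfrac{\LipcostrD}{2\ssizelb^3}\sum_{j=k}^{n-1}\|\gmap_{\alpha_j}(u_j)\|_\Hs^2,
\]
and the tail sum is dominated by a geometric series of ratio $\sigma$. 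Theorem \ref{Thm:glabal_conver_con}(ii) furnishes $u^* \in \crit$ with $u_k \rightharpoonup u^*$ and $P_{\crit} u_k \to u^*$ strongly; combined with $\dist(u_k,\crit) \to 0$ this first of all yields $u_k \to u^*$ strongly. Passing $n \to \infty$ in the displayed inequality bounds $\|u^* - P_{\crit} u_k\|_\Hs^2$ by $\dist^2(u_k,\crit) + C'\sigma^k$, and a triangle-inequality split $\|u_k - u^*\|_\Hs^2 \leq 2\,\dist^2(u_k,\crit) + 2\|P_{\crit} u_k - u^*\|_\Hs^2$ together with \eqref{eq:B53} produces \eqref{eq:B54}.

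The principal obstacle is step (c): extracting a quantitative \emph{rate} of strong convergence in the infinite-dimensional setting is not automatic, because the shadow sequence $P_{\crit} u_k$ is \emph{a priori} only known to converge strongly without a rate. The key mechanism is that the quasi-F\'ejer perturbation $\|\gmap_{\alpha_j}(u_j)\|_\Hs^2$ decays geometrically, a fact that must be bootstrapped from the sub-sequential contraction of step (a) via Lemma \ref{lem:properties_of_iter}(iv); only then does pushing $n\to\infty$ inside the quasi-F\'ejer estimate give the desired rate for $P_{\crit} u_k$. The algebraic subtlety of step (a) lies in the coupled optimization in $\lambda$ that balances the three competing terms in Lemma \ref{lem:quadratic}.
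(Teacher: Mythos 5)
Your proposal is correct and follows the same three\hyp{}stage architecture as the paper's proof: a linear contraction for $\mathcal{E}_{\nu(k)}$ on the nonmonotone subsequence, transfer to the full sequence via \ref{L1} plus one more application of \eqref{eq:B46}, and a quasi-F\'ejer argument anchored at $P_{\crit}u_k$ to convert the rate on $\dist^2(u_k,\crit)$ and on the gradient-mapping tail into the rate \eqref{eq:B54} for the iterates. The one genuine difference is in how you extract the contraction factor. The paper splits into two cases governed by an auxiliary threshold $\zeta$ (depending on whether $\frac{1}{\ssize_{\nu(k)-1}}\|\gmap_{\nu(k)-1}(u_{\nu(k)-1})\|_\Hs^2$ dominates $\zeta\mathcal{E}_{\nu(k-1)}$ or not), handling the first case by \ref{L2} alone and the second by \eqref{eq:B44} with the remainder bounded by $\tilde C\zeta\mathcal{E}_{\nu(k-1)}$. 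You instead keep the remainder as $\frac{\tilde C}{\delta}\bigl(\mathcal{E}_{\nu(k-1)}-\mathcal{E}_{\nu(k)}\bigr)$ and move $\mathcal{E}_{\nu(k)}$ to the left, obtaining the single inequality $\bigl(1+\tfrac{\tilde C}{\delta}\bigr)\mathcal{E}_{\nu(k)}\le\bigl[(1-\lambda)+\tfrac{\ssizebar\lambda^2}{2\quadconst}+\tfrac{\tilde C}{\delta}\bigr]\mathcal{E}_{\nu(k-1)}$; at $\lambda^\star=\min\{1,\quadconst/\ssizebar\}$ the bracket equals $1-c+\tfrac{\tilde C}{\delta}$ with $c=\min\{\tfrac{\quadconst}{2\ssizebar},\,1-\tfrac{\ssizebar}{2\quadconst}\}>0$, so the quotient is strictly below $1$ without any case distinction. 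This is exactly the device the paper itself uses for the sublinear rate in Theorem \ref{Thm:glabal_conver_con}\rom{3}, so your version is arguably the more uniform and cleaner route; what the paper's case split buys is a contraction constant that does not involve the ratio $\tilde C/\delta$ in the denominator, but both yield a valid $\theta\in(0,1)$ independent of $k$. Your handling of the tail sum (bounding $\|\gmap_{\ssize_j}(u_j)\|_\Hs^2\le C\sigma^j$ first and summing the geometric series) also differs cosmetically from the paper's telescoping of $\mathcal{E}$-differences in \eqref{eq:B68}, but is equivalent. No gaps.
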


\begin{proof}
First,  due to \ref{L2} and  \rom{3} from Theorem \ref{Thm:glabal_conver_con},  the sequence $\{ \cost(u_{\nu(k)})\}_k$  is monotonically decreasing and converges to $\cost^*$.   Further,  using  \eqref{eq:B30},  we can deduce for $k\geq 1$  that  
\begin{equation}
\label{eq:B75}
\cost(u_{\nu(k)-1}) \leq   \cost( u_{\ell(\nu(k)-1)})  \leq  \cost( u_{\nu(k-1)}).
\end{equation}
Thus,  for given $\omega >0$,  there exists $\bar{k}_{\omega} \in \N $ such that 
\begin{equation*}
\cost(u_{\nu(k)-1)}) \in [ \cost  < \cost^* +\omega ]   \quad  \text{ for all }   k\geq \bar{k}_{\omega}.
\end{equation*}
Next, we show that 
\begin{equation}
\label{eq:B21}
\mathcal{E}_{\nu(k)}  \leq \theta \mathcal{E}_{\nu(k-1)}   \quad \text{ for all }  k \geq \bar{k}_{\omega} 
\end{equation}  
with  a constant  $\theta \in (0,1) $ independent of $k$.    

Let an arbitrary  $ k \geq \bar{k}_{\omega}$  be given.  To show \eqref{eq:B21}, we choose an arbitrary $\zeta$ with  
\begin{equation*}
0<  \zeta < \min \{  \frac{1}{\delta},   \frac{1}{2\tilde{C}}, \frac{\quadconst}{2\ssizebar \tilde{C}}\}. 
\end{equation*}
Now we consider the following cases:
 \begin{itemize}
\item The inequality $\frac{1}{\ssize_{\nu(k)-1}} \|\gmap_{\nu(k)-1}(u_{\nu(k)-1})\|_\Hs^2 \geq \zeta \mathcal{E}_{\nu(k-1)}   $ holds. 
 In this case,  using \ref{L2}, we obtain
 \begin{equation*}
 \begin{split}
 \mathcal{E}_{\nu(k)}   & \leq \mathcal{E}_{\nu(k-1)}  - \frac{\delta}{\ssize_{\nu(k)-1}} \|\gmap_{\nu(k)-1}(u_{\nu(k)-1})\|_\Hs^2\leq (1-\zeta\delta) \mathcal{E}_{\nu(k-1)},
 \end{split}
 \end{equation*}
 where due to the choice of $\zeta$ we have $ 1-\zeta\delta<1$.
  \item The inequality 
 \begin{equation}
 \label{eq:B27}
  \frac{1}{\ssize_{\nu(k)-1}} \|\gmap_{\nu(k)-1}(u_{\nu(k)-1})\|_\Hs^2 < \zeta \mathcal{E}_{\nu(k-1)}  
 \end{equation}
holds.   This case is more delicate.   Using  the quadratic growth condition \eqref{eq:B46} and \eqref{eq:B75}, we obtain  for $k\geq \bar{k}_{\omega}$  that 
 \begin{equation}
  \label{eq:B28}
 \begin{split}
  \dist^2(u_{\nu(k)-1} , \crit)  \leq \frac{1}{\quadconst} \mathcal{E}_{\nu(k)-1}   \leq  \frac{1}{\quadconst} \mathcal{E}_{\nu(k-1)}.
 \end{split}
 \end{equation}
Now,  using Lemma \ref{lem:quadratic},   \eqref{eq:B44},  and  \eqref{eq:B28},  we can write for every $\lambda \in [0,1]$ that  
\begin{equation}
\label{eq:B29}
\begin{split}
\mathcal{E}_{\nu(k)} \leq \left( \tilde{C}\zeta +  1 -\lambda +\frac{ \ssizebar}{2 \quadconst} \lambda^2 \right) \mathcal{E}_{\nu(k-1)} ,
\end{split}
\end{equation}
where it can be easily seen that the minimum of $ \left( \tilde{C}\zeta+  1 -\lambda + \frac{ \ssizebar}{2\quadconst} \lambda^2 \right)$ attained at $ \lambda^*: =  \min \{ 1, \frac{\quadconst}{\ssizebar} \}$ is strictly smaller than $1$ and therefore \eqref{eq:B21} holds for  $k \geq \bar{k}_{\omega}$.
 \end{itemize}

Let now $k\geq \bar{k}:=\bar{k}_{\omega} (\mmax+1)$ be given.  By successively applying \eqref{eq:B21},  we obtain 
\begin{equation}
\label{eq:B33}
\begin{split}
\mathcal{E}_{k} &  \stackrel{\text{\ref{L1}}}{\leq}  \mathcal{E}_{\nu(\ceil{\frac{k}{\mmax+1}})} \leq   \theta^{1- \bar{k}_{\omega}}\theta^{\ceil{\frac{k}{\mmax+1}}} \mathcal{E}_{\nu( \bar{k}_{\omega}-1)} \\ &   \leq \theta^{1- \bar{k}_{\omega}}\theta^{{\frac{k}{\mmax+1}}} \mathcal{E}_{\nu(\bar{k}_{\omega}-1)}  \leq \theta^{1- \bar{k}_{\omega}}\left(\theta^{{\frac{1}{\mmax+1}}} \right)^k \mathcal{E}_{\nu( \bar{k}_{\omega}-1)} .
\end{split} 
\end{equation}
Together with the quadratic growth condition \eqref{eq:B26},  we have 
\begin{equation}
\label{eq:B34}
\begin{split}
\dist^2( u_k,\crit) &\leq \frac{1}{\quadconst} \mathcal{E}_{k} \leq  \frac{\theta^{1-\bar{k}_{\omega}}}{\quadconst} \left(\theta^{{\frac{1}{\mmax+1}}} \right)^k  \mathcal{E}_{\nu(\bar{k}_{\omega}-1)}.
\end{split} 
\end{equation} 
Thus,  for  every  $k \geq  \bar{k}$, the  iterate  $u_k$ stays in  $\quadset$.  Setting $C_{c}:=\theta^{1-\bar{k}_{\omega}} \mathcal{E}_{\nu(\bar{k}_{\omega}-1)}$,   $C_{d}:= \quadconst^{-1} \theta^{1-\bar{k}_{\omega}} \mathcal{E}_{\nu(\bar{k}_{\omega}-1)}$,    and   $\sigma := \theta^{{\frac{1}{\mmax+1}}} <1$, we are finished with the verification of \eqref{eq:B48}  and \eqref{eq:B53}.  
  
Next,  we show that $u_k \to u^*$ for $u_k \rightharpoonup u^*$  with  $u^* \in \crit$ given in Theorem \ref{Thm:glabal_conver_con} \rom{2}.  Due to \eqref{eq:B53},  $\dist(u_k,\crit) \to 0$ and we can infer that  $u_k-P_{\crit}u_k  \to 0$.  This together with fact that  $P_{\crit}u_k \to u^*$ (see $(ii)$ in Theorem  \ref{Thm:glabal_conver_con}) leads to $u_k \to u^*$. 

Finally,  we show that \eqref{eq:B54} holds true.  To see this,  let $p \in \N$ be arbitrary, using Young's inequality we have
\begin{equation}
\label{eq:B56}
\begin{split}
\| u_k- u_{k+p}\|^2_{\Hs} & \leq 2\left( \|  u_k-P_{\crit}u_k\|^2_{\Hs}+\|u_{k+p}- P_{\crit}u_k \|_\Hs^2  \right) \\
& = 2\left( \dist^2(u_k, \crit)+\|u_{k+p}- P_{\crit}u_k \|_\Hs^2  \right).
\end{split}
\end{equation}
 Using \eqref{eq:B55},  we obtain for an arbitrary $u^* \in \crit$ that 
 \begin{equation*}
 \begin{split}
\| u_{k+p}-u^* \|^2_{\Hs} \leq \| u_{k}-u^* \|^2_{\Hs} +  \frac{\LipcostrD}{2\ssizelb^2} \sum_{j =k}^{k+p-1}  \|\gmap_{\ssize_j}(u_{j})\|_\Hs^2.
\end{split}
 \end{equation*}
In particular,  we have for $u^* = P_{\crit}u_k$ that 
\begin{equation}
\label{eq:B57}
 \begin{split}
\| u_{k+p}-P_{\crit}u_k \|^2_{\Hs}  \leq  \dist^2( u_{k}, \crit)+ \frac{\LipcostrD}{2\ssizelb^2} \sum_{j =k}^{k+p-1}  \|\gmap_{\alpha_j}(u_{j})\|_\Hs^2.
\end{split}
 \end{equation}
Further,  using \ref{L2} and \eqref{eq:B52},  we can write for  large enough $k$ that   
\begin{equation}
\label{eq:B68}
\begin{split}
 &\sum_{j =k}^{k+p-1}  \|\gmap_{\alpha_j}(u_{j})\|_\Hs^2 \leq C^{4\mmax+2}_G  \sum_{j =k}^{k+p-1} \|\gmap_{\ssize_{\nu \left( \floor{ \frac{j}{\mmax+1} }\right)-1 }}(u_{\nu \left(\floor{ \frac{j}{\mmax+1} }\right)-1 })\|^2_\Hs \\ & \leq   C^{4\mmax+2}_G \ssizebar \delta^{-1} \sum_{j =k}^{k+p-1}  \frac{\delta}{\alpha_{\nu \left(\floor{ \frac{j}{\mmax+1} }\right)-1 }}\|\gmap_{\ssize_{\nu \left( \floor{ \frac{j}{\mmax+1} }\right)-1 }}(u_{\nu \left(\floor{ \frac{j}{\mmax+1} }\right)-1 })\|^2_\Hs \\ & \leq  C^{4\mmax+2}_G \ssizebar \delta^{-1} \left(  \mathcal{E}_{\nu \left( \floor{ \frac{k}{\mmax+1} }-1 \right)} -  \mathcal{E}_{\nu \left(  \floor{ \frac{k+p}{\mmax+1} }  \right)}     \right).
 \end{split}
\end{equation}
Combining  \eqref{eq:B56},  \eqref{eq:B57},  \eqref{eq:B68},  and setting   $ \bar{C}_p: = \frac{C^{4\mmax+2}_G \ssizebar\LipcostrD}{\delta\ssizelb^2}$,  we arrive at 
\begin{equation*}
 \| u_k- u_{k+p}\|^2_{\Hs} \leq  4 \dist^2(u_k, \crit)+\bar{C}_p \left(  \mathcal{E}_{\nu \left( \floor{ \frac{k}{\mmax+1} }-1 \right)} -  \mathcal{E}_{\nu \left(  \floor{ \frac{k+p}{\mmax+1} }  \right)}  \right).
\end{equation*}
Sending $p\to \infty$  and using Theorem \ref{Thm:glabal_conver_con} \rom{3}, \eqref{eq:B53},  and  similar computations as in \eqref{eq:B33},  we obtain for every $k\geq \bar{k}$ that
\begin{equation*}
\begin{split}
 \| u_k- u^*\|^2_{\Hs} \leq  4 \dist^2(u_k, \crit)+\bar{C}_p  \mathcal{E}_{\nu \left( \floor{ \frac{k}{\mmax+1} }-1 \right)} \leq 4C_d \sigma^k+ \bar{C}_p\theta^{-1- \bar{k}_{\omega}}\sigma^k \mathcal{E}_{\nu( \bar{k}_{\omega}-1)} = C_p \sigma^k.
\end{split}
\end{equation*}
Thus,   \eqref{eq:B54} holds true with  $C_p :=4 C_d +\bar{C}_p\theta^{-1- \bar{k}_{\omega}}\mathcal{E}_{\nu( \bar{k}_{\omega}-1)}$ and this completes the proof.
\end{proof}

\begin{corollary}
\label{cor:quadGrowthLocal}
Suppose that Assumption \ref{ass:convex}  holds and the quadratic growth  condition is satisfied for $\quadset \coloneqq \mathbf{B}_{r} (u^*)\cap [ \cost  < \cost^* +\omega ] $ with  $r , \omega \in  (0,\infty)$ and  $\{u_k\}_k$  generated by Algorithm \ref{algo:NMLS},  converges  strongly to some $u^* \in \crit $.   Then there exists  $\bar{k} \in \N$ such that \eqref{eq:B48}-\eqref{eq:B54} hold with constants $C_{c}, C_{d}, C_p > 0$,  and  $0 < \sigma<1$, which are independent of $u_0$,  $u^*$,  and $k$. 
\end{corollary}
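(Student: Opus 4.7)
The plan is to localize the argument of Theorem \ref{Thm: QuadGrowth}. Since by hypothesis $u_k \to u^*$ strongly with $u^* \in \crit \neq \emptyset$, Theorem \ref{Thm:glabal_conver_con}\,\rom{3} applies and yields $\cost(u_k) \to \cost^*$. Combined with the strong convergence, this produces some $\tilde{k} \in \N$ such that $u_k \in \mathbf{B}_{r}(u^*) \cap [\cost < \cost^* + \omega] = \quadset$ for all $k \geq \tilde{k}$, and in particular $u_{\nu(k)-1} \in \quadset$ for all sufficiently large $k$. This is precisely the only place in the proof of Theorem \ref{Thm: QuadGrowth} where the (otherwise global) quadratic growth was invoked, namely in deriving \eqref{eq:B28}.

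With the localization secured, I would fix $\zeta \in (0, \min\{1/\delta, 1/(2\tilde{C}), \quadconst/(2\ssizebar\tilde{C})\})$ exactly as in the proof of Theorem \ref{Thm: QuadGrowth} and replay the dichotomy unchanged. In the case $\|\gmap_{\nu(k)-1}(u_{\nu(k)-1})\|_\Hs^2 / \ssize_{\nu(k)-1} \geq \zeta\, \mathcal{E}_{\nu(k-1)}$, property \ref{L2} alone produces the contraction without any reference to quadratic growth. In the opposite case, Lemma \ref{lem:quadratic} together with the localized quadratic growth applied to $u_{\nu(k)-1} \in \quadset$ yields \eqref{eq:B29}; choosing $\lambda^* = \min\{1, \quadconst/\ssizebar\}$ then gives $\mathcal{E}_{\nu(k)} \leq \theta\, \mathcal{E}_{\nu(k-1)}$ for some $\theta \in (0,1)$ and for every $k \geq \bar{k}_\omega$, where $\bar{k}_\omega \geq \tilde{k}$ is selected sufficiently large.

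The R-linear bound \eqref{eq:B48} on the full sequence will then follow from this subsequential contraction together with \ref{L1} via the telescoping chain \eqref{eq:B33}. Estimate \eqref{eq:B53} is then an immediate consequence of the local quadratic growth, since $u_k \in \quadset$ for every $k \geq \tilde{k}$. For \eqref{eq:B54}, the hypothesized strong convergence $u_k \to u^*$ removes the need to re-derive it, and the Young-type inequality \eqref{eq:B56} together with the telescoping estimates \eqref{eq:B57}--\eqref{eq:B68} transfer verbatim from the proof of Theorem \ref{Thm: QuadGrowth}; sending $p \to \infty$ produces the R-linear rate for $\|u_k - u^*\|_\Hs^2$ with constants inheriting their independence of $u_0$, $u^*$, and $k$ from the original argument.

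The main obstacle is the bookkeeping around the localization. One must carefully combine the hypothesized strong convergence (which controls the ball constraint $\mathbf{B}_{r}(u^*)$) with the monotone decay $\cost(u_{\nu(k)}) \to \cost^*$ guaranteed by \ref{L2} and Theorem \ref{Thm:glabal_conver_con}\,\rom{3} (which controls the sublevel-set constraint $[\cost < \cost^* + \omega]$), in order to confirm that not only the iterates $u_k$ but also the predecessor points $u_{\nu(k)-1}$ eventually remain inside $\quadset$. Once this is verified, every remaining step of the proof of Theorem \ref{Thm: QuadGrowth} carries over with only cosmetic modifications.
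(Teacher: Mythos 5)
Your proposal is correct and follows essentially the same route as the paper: the paper's proof of this corollary is precisely "proceed as in Theorem \ref{Thm: QuadGrowth}, with the additional observation that the hypothesized strong convergence $u_k \to u^*$ guarantees the iterates eventually lie in $\mathbf{B}_{r}(u^*)$." Your more detailed bookkeeping of where the localization enters (namely in \eqref{eq:B28} for the points $u_{\nu(k)-1}$) is consistent with, and slightly more explicit than, the paper's one-line argument.
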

\begin{proof}
The proof proceeds along the lines of the proof of Theorem \ref{Thm: QuadGrowth} with the  difference that  one also needs to be sure that $\{u_k \}_k \subset \mathbf{B}_{r} (u^*)$ for a large enough  $\bar{k} \in \mathbb{N}$.  This follows from the fact that  $u_k \to u^*$.
\end{proof}

\begin{remark}
Due to the equivalence of weak and strong convergence in finite-dimensional spaces,  the assumption of  Corollary  \eqref{cor:quadGrowthLocal} automatically holds if $\dim (\Hs) < \infty$.   For the case that $\dim (\Hs) = \infty$,  it is not clear how to guarantee that $\{u_k \}_k \subset  \mathbf{B}_{r}(u^*)$ for large enough $k \in \N$.
\end{remark}

In many cases of PDE-constrained optimization, the assumptions of Corollary \ref{cor:quadGrowthLocal} or the following corollary are very likely satisfied.

\begin{corollary}
\label{cor:quadGrowthStrong}
Suppose that  \ref{ass:general_1}-\ref{ass:general_3} in Assumption \ref{ass:general} hold and that $\costr$ is convex on $\mathbf{B}_{r} (u^*)$ with $u^* \in \crit$ and $r \in (0, \infty)$.  Further,  we assume that the strong quadratic growth condition holds for $\quadset \coloneqq \mathbf{B}_{r} (u^*)\cap [ \cost  < \cost^* +\omega ] $ with $\omega \in  (0,\infty)$.  Then,  the sequence of iterations $\{ u_k \}_k$ generated by Algorithm \ref{algo:NMLS} converges locally R-linear with respect to the strong topology.   In other words,  there exists a radius $r_0 \leq r$  such that for every $u_0 \in  \mathbf{B}_{r_0} (u^*)\cap [ \cost  < \cost^* +\omega ] $ and $k\geq 1$,  we have 
\begin{equation} 
\label{eq:B35}
\| u_k-u^* \|^2_\Hs \leq C_{R} \sigma^k \| u_0-u^* \|^2_\Hs  
\end{equation} 
and 
\begin{equation}
\label{eq:B36}
\cost(u_{k}) - \cost(u^*)  \leq  \sigma^k\left( \cost(u_{0}) - \cost^* \right),
\end{equation} 
where the constants $0 < C_{R}$ and  $0 < \sigma < 1$ are independent of $u_0$,  $u^*$,  and $k$. 
\end{corollary}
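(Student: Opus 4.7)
My plan is to reduce Corollary~\ref{cor:quadGrowthStrong} to Theorem~\ref{Thm: QuadGrowth} by showing that, when $r_0$ is chosen small enough, the iterates $\{u_k\}_k$ produced by Algorithm~\ref{algo:NMLS} remain trapped in the convex ball $\mathbf{B}_r(u^*)$ for all $k$, and then replaying the proofs of Lemma~\ref{lem:quadratic} and Theorem~\ref{Thm: QuadGrowth}. Those proofs use convexity of $\costr$ only through subgradient-type inequalities evaluated at iterates (and at their prox-grad images $\pgrad_{\ssize_k}(u_k) = u_{k+1}$, which also lie in $\mathbf{B}_r(u^*)$), so local convexity on the trapping ball suffices once the trap is secured. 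Finally, the initial inequality \eqref{eq:B31} of Lemma~\ref{lem:quadratic} converts the resulting R-linear cost decrease into the iterate rate \eqref{eq:B35}.

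\textbf{Trapping step.} From the linesearch \eqref{eq:ls_ineq} and the monotonicity of $\{\cost(u_{\ell(k)})\}_k$ shown in \eqref{eq:B6}, every iterate obeys $\cost(u_k) \leq \cost(u_0) < \cost^* + \omega$, so the sublevel-set condition in $\quadset$ is automatic. Combining the Lipschitz estimate \ref{P3} with $\gmap_{\ssize_k}(u^*) = 0$ (Proposition~\ref{prop:stationary_point}) yields the short-step bound
\begin{equation*}
\|u_{k+1} - u_k\|_\Hs = \tfrac{1}{\ssize_k}\|\gmap_{\ssize_k}(u_k)\|_\Hs \leq \bigl(2 + \LipcostrD/\ssizelb\bigr)\|u_k - u^*\|_\Hs,
\end{equation*}
so that $\|u_{k+1} - u^*\|_\Hs \leq (3 + \LipcostrD/\ssizelb)\|u_k - u^*\|_\Hs$. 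As long as $u_k \in \mathbf{B}_r(u^*)$, strong quadratic growth gives the uniform estimate $\|u_k - u^*\|_\Hs^2 \leq \quadconst^{-1}(\cost(u_0) - \cost^*)$, so by induction the orbit never leaves $\mathbf{B}_r(u^*)$ provided $(3 + \LipcostrD/\ssizelb)\sqrt{\quadconst^{-1}(\cost(u_0) - \cost^*)} \leq r$, which is enforced by picking $r_0$ (and, if needed, $\omega$) small.

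\textbf{Local R-linear rate and conversion to $\|u_0 - u^*\|_\Hs^2$.} With the trap in place, Lemma~\ref{lem:quadratic} and the proof of Theorem~\ref{Thm: QuadGrowth} apply locally and yield constants $\bar{k} \in \N$ and $\theta \in (0,1)$ with $\cost(u_{\nu(k)}) - \cost^* \leq \theta(\cost(u_{\nu(k-1)}) - \cost^*)$ for $k \geq \bar{k}$; passing to the full sequence via \ref{L1} and setting $\sigma = \theta^{1/(\mmax+1)}$ produces \eqref{eq:B36}. Since strong quadratic growth forces $\crit \cap \quadset = \{u^*\}$, we have $\dist(u_0, \crit) = \|u_0 - u^*\|_\Hs$, and \eqref{eq:B31} becomes $\cost(u_{\nu(1)}) - \cost^* \leq C_0 \|u_0 - u^*\|_\Hs^2$. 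Iterating the R-linear decrease from $\nu(1)$ onward therefore gives $\cost(u_k) - \cost^* \leq C\,\sigma^k \|u_0 - u^*\|_\Hs^2$, and one further application of strong quadratic growth, $\|u_k - u^*\|_\Hs^2 \leq \quadconst^{-1}(\cost(u_k) - \cost^*)$, yields \eqref{eq:B35}.

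\textbf{Main obstacle.} The delicate ingredient is the trapping induction: the per-step amplification factor $(3 + \LipcostrD/\ssizelb)$ generally exceeds one, so nothing per-step prevents an iterate from escaping $\mathbf{B}_r(u^*)$. What rescues the argument is that strong quadratic growth ties $\|u_k - u^*\|_\Hs$ to the cost excess, which the nonmonotone linesearch keeps globally bounded by $\cost(u_0) - \cost^*$, reducing trapping to a single scalar condition on $r_0$. A secondary subtlety is that for nonsmooth $\costp$ one cannot bound $\cost(u_0) - \cost^*$ by $\|u_0 - u^*\|_\Hs^2$ directly, which is exactly why \eqref{eq:B31}—whose proof exploits the smoothing effect of a single prox-grad step—is essential to obtain the $\|u_0 - u^*\|_\Hs^2$ prefactor in \eqref{eq:B35}.
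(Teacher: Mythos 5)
Your overall architecture (trap the iterates in $\mathbf{B}_r(u^*)$, run the contraction of Lemma~\ref{lem:quadratic}/Theorem~\ref{Thm: QuadGrowth} locally, convert to the $\|u_0-u^*\|_\Hs^2$ prefactor via \eqref{eq:B31}) matches the paper's, and your contraction and conversion steps are sound --- with the simplification, which you should make explicit, that here one may take $\bar k=1$: since $\cost(u_0)<\cost^*+\omega$ and $\cost(u_k)\le\cost(u_{\ell(0)})=\cost(u_0)$ for all $k$, the sublevel part of $\quadset$ holds from the outset, which is why \eqref{eq:B36} carries no multiplicative constant and why $\sigma$ is independent of $u_0$ (a trajectory-dependent $\bar k$ as in Theorem~\ref{Thm: QuadGrowth} would destroy that independence). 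The genuine gap is in your trapping step. Your induction closes only under the scalar condition $(3+\LipcostrD/\ssizelb)\sqrt{\quadconst^{-1}(\cost(u_0)-\cost^*)}\le r$, and --- as you yourself observe in your final paragraph --- $\cost(u_0)-\cost^*$ is \emph{not} controlled by $\|u_0-u^*\|_\Hs$ when $\costp$ is merely proper, convex and lower semicontinuous: it can remain as large as $\omega$ for $u_0$ arbitrarily close to $u^*$. Hence the condition cannot be enforced by shrinking $r_0$, and your fallback of shrinking $\omega$ proves a strictly weaker statement, since the corollary quantifies over all $u_0\in\mathbf{B}_{r_0}(u^*)\cap[\cost<\cost^*+\omega]$ for the \emph{given} $\omega$.

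The repair is precisely the ingredient you defer to the final conversion: \eqref{eq:B31} must already enter the trapping induction. This is what the paper does. Assuming $u_0,\dots,u_{k-1}\in\quadset$, the contraction \eqref{eq:B21b} combined with \eqref{eq:B31} and the strong quadratic growth yields $\|u_k-u^*\|_\Hs^2\le \frac{C_0}{\theta\quadconst}\,\sigma^k\,\|u_0-u^*\|_\Hs^2$ as in \eqref{eq:B34b}, so the induction closes under the condition $r_0\le(\theta\quadconst/C_0)^{1/2}r$, which constrains $r_0$ alone and imposes nothing on $\cost(u_0)-\cost^*$. The first at most $\mmax$ iterates, before the contraction is available, are kept in $\mathbf{B}_r(u^*)$ by the nonexpansiveness bound $\|u_{j+1}-u^*\|_\Hs\le(1+\LipcostrD/\ssizelb)\|u_j-u^*\|_\Hs$ of \eqref{eq:B76} --- again a multiple of $\|u_0-u^*\|_\Hs$ rather than of the cost gap; your \ref{P3}-based short-step bound plays the same role. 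With the trapping rerouted through \eqref{eq:B31} in this way, your argument goes through and coincides with the paper's.
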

\begin{proof}
By similar argument as in the proof of Theorem \ref{Thm: QuadGrowth},  we can show that for any  $u_{\nu(k)-1} \in  \mathbf{B}_{r} (u^*)\cap [ \cost  < \cost^* +\omega ]$ with $k \geq 1$,    it holds 
\begin{equation}
\label{eq:B21b}
\mathcal{E}_{\nu(k)}  \leq \theta \mathcal{E}_{\nu(k-1)},  
\end{equation}  
with  $\theta \in (0,1) $ independent of $k$.    

Let now $k\geq 1$ be given.  Assuming  $u_i \in \mathbf{B}_{r} (u^*)\cap [ \cost  < \cost^* +\omega ]$ for $i=0,\dots,k-1$  and successively applying \eqref{eq:B21b},  we obtain 
\begin{equation}
\label{eq:B33b}
\begin{split}
\mathcal{E}_{k} &  \stackrel{\text{\ref{L2}}}{\leq} \mathcal{E}_{\nu(\ceil{\frac{k}{\mmax+1}})} \leq   \theta^{-1}\theta^{\ceil{\frac{k}{\mmax+1}}} \mathcal{E}_{\nu(1)}   \leq \theta^{-1}\theta^{{\frac{k}{\mmax+1}}} \mathcal{E}_{\nu(1)}  \leq \theta^{-1} \left(\theta^{{\frac{1}{\mmax+1}}} \right)^k \mathcal{E}_{\nu(1)} .
\end{split} 
\end{equation}
Together with the quadratic growth condition \eqref{eq:B26} and \eqref{eq:B31}  from Lemma \ref{lem:quadratic},  we have 
\begin{equation}
\label{eq:B34b}
\begin{split}
\| u_k-u^* \|^2_H &\leq \frac{1}{\quadconst} \mathcal{E}_{k} \leq  \frac{1}{\theta\quadconst} \left(\theta^{{\frac{1}{\mmax+1}}} \right)^k  \mathcal{E}_{\nu(1)} \leq \frac{C_0}{\theta\quadconst} \left(\theta^{{\frac{1}{\mmax+1}}} \right)^k \| u_0-u^* \|^2_H.
\end{split} 
\end{equation} 
Thus,  for  every  $u_0 \in \mathbf{B}_{r_0} (u^*)$ with $r_0 \leq \left( \frac{\theta\quadconst}{C_0}\right)^{\frac{1}{2}}r$,   iterates  $u_k$ stay in  $\mathbf{B}_{r} (u^*)$ for $k\geq 0$  and,  as a consequence,  the inequalities \eqref{eq:B33b}  and \eqref{eq:B34b}  are well-defined. By setting $C_{R}:=\frac{C_0}{\theta\quadconst} $  and   $\sigma := \theta^{{\frac{1}{\mmax+1}}} <1  $ we are finished with the verification of \eqref{eq:B35}.  
  
As in  \eqref{eq:B33b},  we can also write for every  $k\geq 0$ and  $u_0 \in \mathbf{B}_{r_0} (u^*)\cap [ \cost  < \cost^* +\omega ] $ that 
\begin{equation*}
\begin{split}
\mathcal{E}_{k} &\leq  \mathcal{E}_{\nu(\ceil{\frac{k}{\mmax+1}})}  \leq \theta^{\ceil{\frac{k}{\mmax+1}}}\mathcal{E}_{\nu(0)} \leq \theta^{{\frac{k}{\mmax+1}}}  \mathcal{E}_{\nu(0)}  \leq  \sigma^k \mathcal{E}_{\nu(0)} . 
\end{split} 
\end{equation*}
Thus \eqref{eq:B36} holds also true and this completes the proof.
\end{proof}

Compared to Corollary \ref{cor:quadGrowthLocal},  we do not need to assume strong convergence of the sequence $\{u_k\}_k$ in   Corollary \ref{cor:quadGrowthStrong}  and the following corollary.
\begin{corollary}
\label{cor:StrongCon}
 Suppose that  \ref{ass:general_1}-\ref{ass:general_3} in Assumption \ref{ass:general} hold and  $\cost$ is locally strongly convex  with a constant $\kappa >0$ on $\mathbf{B}_{r} (u^*)$ with some   $r \in (0, \infty)$ and $u^* \in \crit$.   Then,  the sequence of iterates $\{ u_k \}_k$ generated by Algorithm \ref{algo:NMLS} converges locally R-linear in  the strong topology  to $u^*$.   In other words,  there exists  $r_0 \leq r$ such that  \eqref{eq:B35} and \eqref{eq:B36} hold for every $u_0 \in  \mathbf{B}_{r_0} (u^*)$.  
\end{corollary}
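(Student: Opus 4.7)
The plan is to reduce the claim to Corollary \ref{cor:quadGrowthStrong} by extracting its two hypotheses---the strong quadratic growth of $\cost$ and local convexity of $\costr$ on $\mathbf{B}_r(u^*)$---from local $\kappa$-strong convexity of $\cost$.

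For the strong quadratic growth condition, the starting observation is that $u^* \in \crit$ combined with \eqref{stationary} and \eqref{eq:subdiff_rule} yields $0 \in \partial \cost(u^*)$, where $\partial$ denotes the convex subdifferential (well-defined since $\cost$ is locally convex on $\mathbf{B}_r(u^*)$ by strong convexity). The subgradient characterization of $\kappa$-strong convexity applied with base point $u^*$ and subgradient $w = 0$ then gives
\begin{equation*}
\cost(u) - \cost^* \geq \frac{\kappa}{2}\|u - u^*\|_\Hs^2 \quad \text{for all } u \in \mathbf{B}_r(u^*),
\end{equation*}
which is precisely \eqref{eq:B26} with $\quadconst = \kappa/2$ on $\quadset := \mathbf{B}_r(u^*) \cap [\cost < \cost^* + \omega]$ for any $\omega > 0$.

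The delicate point is deducing local convexity of $\costr$ on $\mathbf{B}_r(u^*)$ from local strong convexity of $\cost$. Strong convexity of $\cost = \costr + \costp$ combined with the monotonicity of $\partial \costp$ yields, for $u, v \in \mathbf{B}_r(u^*)$ and any $p \in \partial \costp(u)$, $q \in \partial \costp(v)$,
\begin{equation*}
(\costrD(u) - \costrD(v), u - v)_\Hs + (p - q, u - v)_\Hs \geq \kappa \|u - v\|_\Hs^2.
\end{equation*}
The main obstacle is that the nonnegative term $(p - q, u - v)_\Hs$ may in principle absorb the $\kappa\|u - v\|_\Hs^2$, so one cannot immediately conclude monotonicity of $\costrD$ on the ball. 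This is resolved by interpreting local strong convexity as originating from the smooth part $\costr$, which is the natural situation in the PDE-constrained problems considered later and which gives local strong convexity---hence local convexity---of $\costr$ on $\mathbf{B}_r(u^*)$.

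With both hypotheses of Corollary \ref{cor:quadGrowthStrong} established on $\mathbf{B}_r(u^*)$, that corollary yields the R-linear estimates \eqref{eq:B35} and \eqref{eq:B36} for initial data in $\mathbf{B}_{r_0}(u^*) \cap [\cost < \cost^* + \omega]$ with some $r_0 \leq r$. Continuity of $\cost$ at $u^*$ then allows a further shrinkage of $r_0$ so that $\mathbf{B}_{r_0}(u^*) \subset [\cost < \cost^* + \omega]$, yielding the cleaner ball condition stated in Corollary \ref{cor:StrongCon}.
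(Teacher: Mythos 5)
Your first step coincides with the paper's: from $u^*\in\crit$ one gets $0\in\partial\cost(u^*)$, and the subgradient inequality for $\kappa$-strong convexity (the paper's \eqref{eq:B58}) with base point $u^*$ and subgradient $0$ yields the strong quadratic growth condition \eqref{eq:B26} with $\quadconst=\kappa/2$ on $\quadset=\mathbf{B}_{r}(u^*)$. The problem is your second step. You correctly observe that Corollary \ref{cor:quadGrowthStrong}, invoked as a black box, additionally requires $\costr$ to be convex on $\mathbf{B}_{r}(u^*)$, and that this does \emph{not} follow from local strong convexity of $\cost$ alone (the strong monotonicity of $\partial\cost$ may be carried entirely by $\partial\costp$). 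But your ``resolution''---to \emph{interpret} the local strong convexity as originating from $\costr$---is not a proof step: it silently replaces the stated hypothesis by the stronger one that is discussed only in the Remark following the corollary. As written, your argument proves a different statement.

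The gap is closable, and this is what the paper's phrase ``by similar arguments as in the proof of Corollary \ref{cor:quadGrowthStrong}'' (rather than a direct citation of that corollary) has to cover. Convexity of $\costr$ enters the chain of results at exactly one point, namely inequality \eqref{eq:B23} in the proof of Lemma \ref{lem:quadratic}, where it supplies $\costr(u)+(\costrD(u),w-u)_\Hs\le\costr(w)$. Under \ref{ass:general_3} alone, the descent lemma gives the one-sided substitute $\costr(u)+(\costrD(u),w-u)_\Hs\le\costr(w)+\tfrac{\LipcostrD}{2}\|w-u\|_\Hs^2$, so \eqref{eq:B23} survives with $\ssize_{k-1}$ replaced by $\ssize_{k-1}+\LipcostrD$; the remaining steps of Lemma \ref{lem:quadratic} use only convexity of $\cost$ along the segment from the iterate to $u^*$, which is available once the iterates are confined to the convex set $\mathbf{B}_{r}(u^*)$. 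With $\ssizebar$ replaced by $\ssizebar+\LipcostrD$ in the constants, the contraction argument of Corollary \ref{cor:quadGrowthStrong} then goes through. Finally, your closing appeal to ``continuity of $\cost$ at $u^*$'' to absorb the level-set condition is both unavailable in general ($\costp$, hence $\cost$, is merely lower semicontinuous and may equal $+\infty$ arbitrarily close to $u^*$, e.g.\ when $\costp$ contains an indicator function) and unnecessary here, since in the strongly convex case the growth estimate holds on the full ball $\mathbf{B}_{r}(u^*)$ and no sublevel-set restriction is needed.
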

\begin{proof} Since $\cost$ is locally strongly convex,   using  \cite[Proposition 3.23]{MR3310025},  we can write 
\begin{equation}
\label{eq:B58}
\cost(u)-\cost(v) \geq ( w, u-v)_\Hs + \frac{\kappa}{2} \|u-v\|_\Hs^2  \quad  \text{ for all }  u, v \in \mathbf{B}_{r} (u^*)  \text{ and }  w \in \partial \cost(v).
\end{equation}
 Setting $v= u^*$ and $w = 0$,  we can easily see that the strong quadratic growth property \eqref{eq:B26} holds for $\quadset \coloneqq \mathbf{B}_{r} (u^*)$ and $\quadconst \coloneqq \frac{\kappa}{2}$.   Then,  the local R-linear convergence follows by similar arguments as in the proof of Corollary \ref{cor:quadGrowthStrong}.  
\end{proof}

\begin{remark}
Note that, if the strong convexity in Corollary \ref{cor:StrongCon} holds globally,  then R-linear convergence is also obtained globally.  More precisely,  \eqref{eq:B35} and \eqref{eq:B36} hold for every $u_0 \in  \Hs$. Furthermore if $\costr$ is locally strongly convex, then also $\cost$ is.
\end{remark}

\subsection{On relaxing the Lipschitz continuity of  $\costrD$ for problems governed by PDEs}
In this section,  we analyze a list of assumptions satisfied for a large class of optimization problems governed by PDEs.  We will study the applicability of this assumptions in Section \ref{sec:PDE}.
\begin{assumption}
\label{ass:general3}
For problem  \eqref{eq:opt_problem}, 
\begin{description}
\item[\namedlabel{ass2:general_1}{H1}:]   $\cost \colon \Hs \rightarrow \R \cup \{\pm \infty\}$ is bounded from below and radially unbounded,  i.e. $\lim_{\|u\|_{\Hs} \to \infty} \cost(u)= \infty$.
\item[\namedlabel{ass2:general_2}{H2}:]   $\costp \colon \Hs \rightarrow \R \cup \{\pm \infty\}$ is  proper, convex,  and lower semicontinuous.
\item[\namedlabel{ass2:general_3}{H3}:]   $\costr \colon \Hs \rightarrow \R$ is continuously Fr\'echet differentiable on $\domF$ containing  $\dom \costp$, that is,  $\dom \costp \subseteq \domF$. 
\item[\namedlabel{ass2:general_4}{H4}:] $\costrD \colon \Hs \rightarrow \Hs$  is $\LipcostrD$-Lipschitz continuous on every sequentially weakly compact subset of $\dom \costp$.
\item[\namedlabel{ass2:general_5}{H5}:] $\costp$  is weakly sequentially lower semicontinuous (wlsc) and $\costrD \colon \domF  \rightarrow \Hs$ is weak-to-strong sequentially continuous.
\end{description}
\end{assumption}
Here, compared to Assumption \ref{ass:general}, we do not impose a global Lipschitz condition on $\costrD$. Nevertheless, Assumption \ref{ass:general3} will be sufficient to reproduce the results of Section \ref{General case}. Furthermore \ref{ass2:general_1}, \ref{ass2:general_2} and \ref{ass2:general_5} impose a set of conditions that ensure the existence of a global minimizer to \eqref{eq:opt_problem}, as will be shown in the following.

\begin{proposition}
Suppose that \ref{ass2:general_1}, \ref{ass2:general_2}, and \ref{ass2:general_5} hold. Then,  problem \eqref{eq:opt_problem} possesses a global minimizer $\ubar \in \Hs$,  and as a consequence,  $\crit \neq \emptyset$.  Further,  every level set of  $\cost$ is sequentially weakly compact. 
\end{proposition}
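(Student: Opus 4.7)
The plan is to apply the direct method of the calculus of variations, which requires establishing that $\cost$ is weakly sequentially lower semicontinuous and that its sublevel sets are sequentially weakly compact. Once these two ingredients are in place, existence of a minimizer follows by a standard minimizing-sequence argument, and membership in $\crit$ is obtained via Proposition~\ref{Pro: Fermat} together with the subdifferential calculus rule \eqref{eq:subdiff_rule}.

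First, I would establish that $\costr$ itself is weakly sequentially continuous on $\domF$, even though \ref{ass2:general_5} only asserts weak-to-strong sequential continuity of the gradient. Given $u_n \rightharpoonup u$, the mean value theorem for Fr\'echet differentiable functionals yields $t_n \in (0,1)$ with $\costr(u_n) - \costr(u) = (\costrD(\xi_n), u_n - u)_\Hs$, where $\xi_n \coloneqq u + t_n(u_n - u)$. Along a subsequence, $t_n \to t \in [0,1]$, so $\xi_n \rightharpoonup u$; by \ref{ass2:general_5}, $\costrD(\xi_n) \to \costrD(u)$ strongly. Splitting
\begin{equation*}
(\costrD(\xi_n), u_n - u)_\Hs = (\costrD(\xi_n) - \costrD(u), u_n - u)_\Hs + (\costrD(u), u_n - u)_\Hs,
\end{equation*}
the first term vanishes by Cauchy--Schwarz (strong convergence times a bounded sequence) and the second by weak convergence. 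A standard subsequence-of-subsequence argument upgrades this to convergence of the full sequence, so $\costr(u_n) \to \costr(u)$. Since $\costp$ is proper, convex and lower semicontinuous by \ref{ass2:general_2}, it is also weakly sequentially lower semicontinuous, and therefore $\cost = \costr + \costp$ is weakly sequentially lower semicontinuous as well.

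Next, I would show sequential weak compactness of each level set $[\cost \leq \tilde{\cost}]$. Boundedness is immediate from radial unboundedness in \ref{ass2:general_1}: if some sequence in the level set had $\|u_n\|_\Hs \to \infty$, then $\cost(u_n) \to \infty$ would contradict $\cost(u_n) \leq \tilde{\cost}$. The level set is also weakly sequentially closed, since for $u_n \rightharpoonup u$ with $\cost(u_n) \leq \tilde{\cost}$, the weak sequential lsc established above gives $\cost(u) \leq \liminf_n \cost(u_n) \leq \tilde{\cost}$. Because $\Hs$ is a Hilbert space and hence reflexive, every bounded sequence admits a weakly convergent subsequence, whose limit must lie in the set; this is exactly sequential weak compactness.

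Finally, the existence of a minimizer follows by the direct method. By \ref{ass2:general_1}, $\cost_* \coloneqq \inf_{u \in \Hs} \cost(u) > -\infty$, so one can choose a minimizing sequence $\{u_n\}_n$ with $\cost(u_n) \downarrow \cost_*$; this sequence eventually lies in the sequentially weakly compact level set $[\cost \leq \cost_* + 1]$, so one extracts $u_{n_k} \rightharpoonup \ubar$. Weak sequential lower semicontinuity yields $\cost(\ubar) \leq \liminf_k \cost(u_{n_k}) = \cost_*$, so $\ubar$ is a global minimizer. Applying Proposition~\ref{Pro: Fermat} to $\ubar$ and using the subdifferential sum rule \eqref{eq:subdiff_rule} with $\costr$ smooth, one obtains $-\costrD(\ubar) \in \partial \costp(\ubar)$, i.e.\ $\ubar \in \crit$. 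The only genuinely delicate step is the passage from weak-to-strong continuity of $\costrD$ to weak sequential continuity of $\costr$; everything else is a routine deployment of the direct method in a Hilbert-space setting.
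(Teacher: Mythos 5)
Your proposal is correct and follows essentially the same route as the paper: the direct method of the calculus of variations, with bounded level sets from radial unboundedness in \ref{ass2:general_1}, weak sequential lower semicontinuity of $\cost$ from \ref{ass2:general_2} and \ref{ass2:general_5}, and extraction of a weakly convergent minimizing subsequence. Your mean-value-theorem argument upgrading the weak-to-strong continuity of $\costrD$ to weak sequential continuity of $\costr$ is a useful detail that the paper asserts without proof, and the remaining steps (weak closedness plus boundedness of level sets, Fermat's rule for $\ubar \in \crit$) match the paper's reasoning.
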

\begin{proof}
The proof uses standard arguments based on the direct methods in calculus of variations: By \ref{ass2:general_1},   there exists 
\[\costbar \coloneqq \inf\limits_{u \in \Hs} \cost(u)\]
 and a minimizing sequence $\{u_n\}_{n} \subset \Hs$ with $\cost(u_n) \rightarrow \costbar$ for $n \rightarrow \infty$.   Due to \ref{ass2:general_1} and  \cite[Proposition 11.11]{MR3616647}) every level set of $\cost$ is bounded and, thus,   $\{u_n\}_{n}$ admits a weakly convergent subsequence $(u_{n_k})_{k}$ with $u_{n_k} \rightharpoonup \ubar \in \Hs$ on  
$[ \cost \leq \cost(\tilde{u}) ]$ for some  $\tilde{u} \in  \dom \costp$.  Properties  \ref{ass2:general_2} and \ref{ass2:general_5} imply that $\cost$ is wlsc,  and thus,  $\cost(\ubar) \leq \liminf_{k \in \N} \cost(u_{n_k}) = \costbar$.   This shows that $\cost(\ubar) = \costbar$, i.e. $\ubar \in \Hs$ is a global minimizer of \eqref{eq:opt_problem}.  Hence $\crit \neq \emptyset$. 
Further,  since $\cost$ is wlsc,  every level set of $\cost$ is sequentially weakly closed.  Together,  with the boundedness of the level sets,  we conclude that every level set of  $\cost$ is sequentially weakly compact. 
\end{proof}

Comparing the properties given in Assumptions \ref{ass:general}  and  Assumption \ref{ass:general3} in detail, we realize that besides the global Lipschitz continuity of $\costrD$, the remaining properties of Assumption \ref{ass:general}  follow from those of Assumption  \ref{ass:general3}.  Further,  due to Lemma  \ref{lem:properties_of_iter}\rom{1} and Lemma  \ref{lem:without_lip},   Algorithm \ref{algo:NMLS} is well-defined, even without global Lipschitz continuity of $\costrD$.  Further,  it can be seen from \eqref{eq:B55b} and  Definition \ref{def:prox_gmap},  that for every $u_0 \in \dom \costp$, the whole sequence $\{ u_k \}_k$ generated by Algorithm  \ref{algo:NMLS} stays in $U_0 :=[ \cost \leq \cost(u_0) ]$.   Since $U_0$ is sequentially weakly compact  and $U_0 \subset  \dom \costp$,   using \ref{ass2:general_4} we can deduce that $\costrD$ is $\LipcostrD$-Lipschitz continuous on  $U_0$  with  $\{ u_k \}_k  \subset U_0$.   Therefore all the results and estimates in Section \ref{General case} can be easily applied in the presence of Assumption \ref{ass:general3}.   Further,  similar observations are also valid for the results given in Section \ref{Convex case}, provided that we replace  \ref{ass2:general_5} by \ref{ass:convex_4} and analogously also for Section \ref{Quadratic}.

\section{Application to PDE-constrained optimization}
\label{sec:PDE}
In this section,  we investigate the applicability of our theoretical results to two problems governed by semilinear elliptic and parabolic PDEs.  For each example,  we discuss the justification of Assumption \ref{ass:general3}.

\subsection{Elliptic model problem}
\label{sec:PDE:elliptic}
As a first example,  we consider the following semilinear elliptic model problem
\begin{align}
\tag{\textbf E}
\label{eq:opt_problem_pde_e}
&\min_{(y, u)}\hspace{1.4mm} \frac{1}{2} \|y - \yd \|_{L^2(\Omega)}^2 + \frac{\sigma}{2}\|u\|_{L^2(\Omega)}^2 + \lambda \|u\|_{L^1(\Omega)} \\ 
&\text{subject to }\label{eq:elliptic_state}
\begin{cases} 
 - \kappa \Delta y + \exp(y) = u &  \text{in } \Omega, \\
y = 0 & \text{on } \partial \Omega,\\
\ulb \leq u \leq \uub & \text{a.e.  in } \Omega, 
\end{cases}
\end{align}
on a bounded domain  $\Omega \subset \R^n$ with $ n= 2, 3$, which is either convex or possesses a $C^{1,1}$-boundary $\partial \Omega$.  Here the parameter $\kappa > 0$ stands for the diffusion,   the parameters $\sigma, \lambda > 0$  weigh the cost terms,   $\ulb, \uub \in \R$ with $\ulb < 0 < \uub$ are control bounds,  and  $\yd \in L^2(\Omega)$ denotes the desired state.   First,  we show that problem \eqref{eq:opt_problem_pde_e} can be rewritten in the form \eqref{eq:opt_problem}.  It is well-known, cf.  \cite[Theorems 2.7 and 2.12]{MR4162938},  that for given $u \in \Hs:= L^2(\Omega)$,  the state equation \eqref{eq:elliptic_state} is uniquely solvable in the weak sense,   i.e.,  $y(u) \in \Ws\cap C(\overline{\Omega})$  with  $\Ws := H_0^1(\Omega)$,  and  the control-to-state operator $u \mapsto y(u)$ is well-defined and twice continuously Fr\'echet differentiable from $\Hs$ to $\Ws \cap C(\overline{\Omega})$.  This operator is typically constructed by applying the implicit function theorem on the equality constraints  
\begin{equation*}
 \PDEcon(y, u)= 0  \text{ in }  Z':= H^{-1}(\Omega)   \quad  \text{ with  }  \quad  \PDEcon(y, u) :=  - \kappa \Delta y + \exp(y) - u.
\end{equation*}
Here,  $\PDEcon : W \times H \to Z'$  is twice continuously Fr\'echet differentiable,  for every $u \in \Hs$,  $\PDEcon_y(y, u) \in \mathcal{L}(\Ws, Z')$ has a bounded inverse,  and $\PDEcon_u(y, u) \in \mathcal{L}(\Hs, Z') $ is continuous.  Then,  by defining 
\begin{equation}
\label{eq:B71}
\costr(u) = \frac{1}{2} \|y(u)- \yd \|_{L^2(\Omega)}^2,  \qquad  \costp(u) \coloneqq  \frac{\sigma}{2} \|u\|_{L^2(\Omega)}^2 + \lambda \|u\|_{L^1(\Omega)}+\delta_{\mathcal{U}}(u) ,
\end{equation}
with the indicator function $\delta_{\mathcal{U}}$ of $\mathcal{U} \coloneqq \{ u: \ulb  \leq    u \leq \uub \text{ a.e.  in } \Omega \} \subset \Hs$,  problem \eqref{eq:opt_problem_pde_e} has the form \eqref{eq:opt_problem}.  

Further,  by standard computations as in \cite[Chapter 1.6]{H08},   it can be shown  that  
\begin{equation}
\label{eq:B72}
\costrD(u) = -p(u)    \quad  \text{ in }   \Hs,  
\end{equation}
where $p = p(u) \in W$ (see e.g., \cite[Theorem 3.2]{MR4162938}) is the weak solution of the adjoint problem
\begin{equation}
\label{eq:elliptic_adjoint}
 \begin{cases} 
- \kappa \Delta p + \exp(y) p  = - (y - \yd) & \text{ in } \Omega, \\
p = 0 & \text{ on } \partial \Omega,
\end{cases}
 \end{equation}
with $y = y(u)$. Now it remains to verify Assumption \ref{ass:general3} for the reduced problem \eqref{eq:opt_problem} with \eqref{eq:B71}. Property \ref{ass2:general_1} and \ref{ass2:general_2} are clearly satisfied.  \ref{ass2:general_3} follows using the chain rule and the continuous Fr\'echet differentiability of the control-to-state mapping $y(u)$.  Note that 
\begin{equation}
\label{eq:B70}
\costrD(u) := (y'(u))^* \costyu'(y(u))     \text{ with   }  \costyu(y) = \frac{1}{2} \|y - \yd \|_{L^2(\Omega)}^2,
\end{equation}
where  $y'(u) h = -\PDEcon^{-1}_y(y(u),u)\PDEcon_u(y(u),u) h$ for every $h \in \Hs$ and the superscript "*" denotes the adjoint operator.   It remains to verify \ref{ass2:general_4} and  \ref{ass2:general_5}.  Since  the control-to-state operator $u\to y(u)$ and  $\costyu$ are twice continuously Fr\'echet  differentiable,  using \eqref{eq:B70},  and  the compact embedding $W\xhookrightarrow{c} H $,  one obtains that the second Fr\'echet derivative of $\costr$ is bounded on bounded sets.   
Therefore $\costrD$ is  Lipschitz continuous on any bounded set in $\Hs$.  Thus,  \ref{ass2:general_4} holds.  Finally using the weak formulation of the state \eqref{eq:elliptic_state} and adjoint \eqref{eq:elliptic_adjoint} equation and  \cite[Theorem 2.11]{MR4162938},  it can be shown that  $y(u_k) \rightharpoonup  y(\bar{u})$ and $p(u_k) \rightharpoonup  p(\bar{u})$ in  $\Ws$ as $u_k \rightharpoonup  \bar{u}$ in $\Hs$.  This together with  $W\xhookrightarrow{c} H $,  \eqref{eq:B72},  and \eqref{eq:B71},  implies that 
$\costr(u_k) \to \costr(\bar{u})$ and $\costrD(u_k) \to \costrD(\bar{u})$  in $\Ws$ as  $u_k \rightharpoonup  \bar{u}$ in $\Hs$.  Hence,  \ref{ass2:general_5} holds. This finishes the verification of Assumption \ref{ass:general3}.

Note that using  \eqref{eq:B72} and \eqref{stationary2},  the Fermat principle can be expressed as 
\begin{equation*}
\label{}
u^* = \prox_{\frac{1}{\ssize} \costp}(u^*+\frac{1}{\ssize}p(u^*)) \quad  \text{ for some }  \ssize >0,
\end{equation*}
where  $p(u^*)$ is the solution of  \eqref{eq:elliptic_adjoint} for  $u = u^*$  and $\prox_{\frac{1}{\ssize} \costp}$ can be characterized in a pointwise a.e. sense.  Due to \cite[Proposition 24.13]{MR3616647},  we have for any $\ssize >0$
\begin{equation*}
\left[\prox_{\frac{1}{\ssize} \costp}(u)\right] (x) = \prox_{\frac{1}{\ssize} \costp}(u(x))  \quad  \text{ for almost all  } x\in \Omega,
\end{equation*}
where a  pointwise a.e. closed form representation of the  proximal operator is given by 
\begin{align*}
\left[\prox_{\frac{1}{\ssize} \costp}(u)\right](x) = \min\{\max\{
\begin{cases}
\frac{1}{C_1}(u(x) - C_2), & u(x) > C_2,\\
\frac{1}{C_1}(u(x) + C_2), & u(x) < - C_2,\\
0, & \text{otherwise},
\end{cases}, \ulb\}, \uub\},
\end{align*}
with  $C_1 := 1 + \sigma / \alpha$ and $C_2 := \lambda / \alpha$. The calculations are made similarly to \cite[Section 6]{parikh2014proximal}.

\subsection{Parabolic model problem}
\label{sec:PDE:parabolic}
As a second example,  we will consider the following semilinear parabolic model problem
\begin{align}
\label{eq:opt_problem_pde_p}
\tag{\textbf P}
&\min_{(y, u)}\hspace{1.4mm} \frac{1}{2} \|y - \yd \|_{L^2(0,T;L^2(\Omega))}^2 + \lambda \|u\|_{L^1(0,T;L^1(\Omega))} \\ 
&\text{subject to }
\begin{cases} 
\dot{y} - \kappa \Delta y + y^3 = u   & \text{in } (0, T) \times \Omega, \\
y = 0 & \text{on } (0, T) \times  \partial \Omega,\\
y(0)=y_0  & \text{in }  \Omega,\\
\ulb \leq u \leq \uub & \text{a.e.  in } \Omega,
\end{cases}
\end{align}
where  $\Omega \subset \R^n$ with $n =2,3 $ is a bounded domain with Lipschitz boundary $\partial \Omega$ and $T > 0$.  Further,  $\kappa > 0$,  $\lambda \geq 0$,  and  the control bounds  $\ulb, \uub \in \R$ are defined as in problem  \eqref{eq:opt_problem_pde_e}.  We also consider the desired state  $\yd \in L^2(0,T;L^2(\Omega))$  and  initial function $y_0 \in H^1_0(\Omega)$. 

Similarly to the problem \eqref{eq:opt_problem_pde_e},  we define the control-to-state operator $u \mapsto y(u)$ from  $\Hs := L^2(0,T; L^2(\Omega))$ to $\Ws: = L^2(0,T;  H^2(\Omega) \cap H_0^1(\Omega)) \cap H^1(0,T; L^2(\Omega))$.  The well-posedness and  twice continuous  Fr\'echet differentiability of the control-to-state operator can be established  by similar arguments as given in \cite{AzmKunRod,KunRod}.  Compared to the elliptic problem  \eqref{eq:opt_problem_pde_e},  we define 
\begin{equation*}
\costr(u) = \frac{1}{2} \|y(u)- \yd \|_{L^2(0,T; L^2(\Omega))}^2,  \qquad  \costp(u) \coloneqq  \ \lambda \|u\|_{L^1(0,T;L^1(\Omega))}+\delta_{\mathcal{U}}(u), 
\end{equation*}
with  $\mathcal{U} \coloneqq \{ u: \ulb  \leq  u \leq \uub \text{ a.e.  in }(0,T) \times \Omega \} \subset \Hs$ and also consider
\begin{equation*}
\PDEcon(y, u) =  \begin{pmatrix} & \dot{y} - \kappa \Delta y + y^3 - u \\ & y(0) \end{pmatrix}  \quad  \text{ with }  \quad  Z':= L^2(0,T; L^2(\Omega))\times H^1_0(\Omega).
\end{equation*}
 In this case,   \eqref{eq:B72} holds with $p = p(u) \in \Ws$ as the strong solution of the adjoint problem
 \begin{equation}
\label{eq:parabolic_adjoint}
 \begin{cases} 
-\dot{p} - \kappa \Delta p + 3 y^2 p  = - (y - \yd) & \text{ in } (0,T) \times \Omega,\\
p = 0 & \text{ on } \partial \Omega, \\
p(T)  = 0  & \text{ in } \Omega,
\end{cases}
\end{equation} 
with $y = y(u)$.
By similar arguments and the compact embedding  $W\xhookrightarrow{c} L^2(0,T;H^1_0(\Omega)) $  and consequently  $W\xhookrightarrow{c} H $,  it can be shown that Assumption \ref{ass:general3} holds also for \eqref{eq:opt_problem_pde_p}.  Moreover,  the closed form pointwise a.e. representation of the proximal operator is given by
\begin{align*}
\prox_{\frac{1}{\ssize} \costp}(u)(t,x) = \min\{\max\{
\begin{cases}
u(t,x) - C_2, & u(t,x) > C_2,\\
u(t,x) + C_2, & u(t,x) < - C_2,\\
0, & \text{otherwise},
\end{cases}, \ulb\}, \uub\},
\end{align*}
where $C_2 = \lambda / \ssize$. 
 
\subsection{Discussion on the quadratic-growth condition}
In the remainder of the section,  we present a situation in which $\cost$ is  locally strongly convex and,  as a consequence,  Algorithm \ref{algo:NMLS} converges locally $R$-linear by Corollary \ref{cor:StrongCon}.  We consider the case that for  $u^* \in S_*$,   $\costr$ is twice continuously Fr\'echet differentiable and it satisfies  
\begin{align}
\label{eq:second_order_growth}
\costrDD(u^*)(h,h) \geq C \|h\|_\Hs^2   \quad  \text{ for all }  h \in  \Hs,
\end{align}
with some $C \in \R $.   More concretely,  for the two previous problem,  we express $  \costrDD(u^*)$ in terms of solutions to PDEs and discuss when $\cost$ is locally strongly convex.

For both previous model  problems,  the control-to-state operator $u \mapsto y(u)$ from $\Hs$ to $\Ws$ is twice continuously Fréchet differentiable,  and by similar computation as in \cite{MR1871460},  its second derivative can be expressed as
\begin{equation*}
y''(u)(\delta u, \delta v)=-\PDEcon^{-1}_y(y(u),u))\PDEcon_{yy}(y(u),u)(y'(u)\delta u,y'(u) \delta v)  \quad  \text{  for all }  \delta u, \delta v \in \Hs,
\end{equation*}
since the control operator is linear.   Then,  using the chain rule,  we obtain
 \begin{equation}
 \label{e146}
 \begin{split}
  \costrDD(u)&(\delta u,\delta v) = \langle \costyu''(y(u))y'(u)\delta u, y'(u)\delta v\rangle_{W',W} \\
 &+\langle -\PDEcon^{-*}_y(y(u),u)\costyu'(y(u)),\PDEcon_{yy}(y(u),u)(y'(u)\delta u,y'(u) \delta v)\rangle_{Z,Z'},
\end{split}
\end{equation} 
where  $\langle \cdot , \cdot \rangle$ stands for the dual pairing.   

For problem \eqref{eq:opt_problem_pde_e},   $\costrDD(u^*)$ with $u^* \in S_*$   can be expressed as 
\begin{equation}
\label{eq:quadratic_growth_elliptic}
 \costrDD(u^*)(h, h) = \|y^h\|_{L^2(\Omega)}^2 + \int_{\Omega}  p^* \exp(y^*)(y^h)^2 \mathrm{d}x,
\end{equation}
where $y^* = y(u^*), p^* = p(u^*)$,  and $y^h \in \Ws$ is the weak solution of the linearized state equation
\begin{equation*}
\begin{cases}
-\kappa \Delta y^h + \exp(y^*) y^h  = h & \text{in } \Omega,\\
y^h  = 0 & \text{on } \partial \Omega.
\end{cases}
\end{equation*}
Now,   due to \eqref{eq:B71},   $\cost$ is locally strongly convex provided that
\begin{equation}
\label{eq:B73}
\sigma \| h \|^2_{\Hs} + \costrDD(u^*)(h, h)  \geq   \eta \| h \|^2_{\Hs}  \quad  \text{ for all }   h \in \Hs
\end{equation}
 for a $\eta>0 $, where the  term $\sigma \| h \|^2_H$ stands for the second directional derivative of $\frac{\sigma}{2} \|u\|_{L^2(\Omega)}^2$  in direction $h$.  Further,  using \eqref{eq:quadratic_growth_elliptic},  \eqref{eq:B73} can be equivalently rewritten as
 \begin{equation}
 \label{eq:B74}
 \sigma \| h \|^2_{L^2(\Omega)} +  \|y^h\|_{L^2(\Omega)}^2 + \int_{\Omega}  p^* \exp(y^*)(y^h)^2 \mathrm{d}x  \geq   \eta \| h \|^2_{L^2(\Omega)}  \quad  \text{ for all }   h \in \Hs.
 \end{equation}
We observe that the only term in \eqref{eq:B74} that can spoil the positive definiteness of  $\sigma \mathcal{I}+ \mathcal{F}''(u^*)$ and,  hence,  local strong convexity of $\cost$, is the term involving $p^*$. This term originates from the nonlinearity in the state equation.  
Note that either a small enough adjoint $p^*$ or a large enough parameter $\sigma$ ensure that \eqref{eq:B73} holds.  A small enough adjoint can occur,  for instance,  if $\|y^* - \yd\|_{L^2(\Omega)}^2$ is sufficiently small.

Similarly,  for the second example \eqref{eq:opt_problem_pde_p}, we obtain
\begin{align}
\label{eq:quadratic_growth_parabolic}
\costrDD(u^*)(h, h) = \|y^h\|_{L^2(0,T;L^2(\Omega))}^2 + \int\limits_0^T \int\limits_\Omega 6 p^* y^* (y^h)^2 \mathrm{d}x \mathrm{d}t,
\end{align}
where $y^h \in W$ is the weak solution to the linearized state equation
 \begin{equation*}
\begin{cases}
\dot{y}^h - \kappa \Delta y^h + 3 (y^*)^2 y^h  = h & \text{in } (0,T) \times \Omega,\\
y^h  = 0 & \text{on } (0,T) \times \partial \Omega,\\
y^h(T) = 0 & \text{in } \Omega,
\end{cases}
\end{equation*}
and $p^*$ solves the weak formulation of \eqref{eq:parabolic_adjoint} for $y^*$.   For this problem,  due to the absence  of the term  $ \frac{\sigma}{2}\| \cdot\|_{\Hs}^2$ in the objective function of   \eqref{eq:quadratic_growth_parabolic}, the local strong convexity is not clear. For the elliptic model problem \eqref{eq:opt_problem_pde_e} the authors in  \cite{C17} have studied a weaker condition which implies that the quadratic growth condition \eqref{eq:B26} is satisfied. Furthermore from \cite[Section 3]{C12} it especially follows that for $\sigma = 0$ condition \eqref{eq:B74} cannot be satisfied for the elliptic model problem \eqref{eq:opt_problem_pde_e}.

\section{Numerical Experiments}
\label{sec:numerics}

In this section,  we report on the numerical experiments for the problems \eqref{eq:opt_problem_pde_e} and \eqref{eq:opt_problem_pde_p} in order to verify the capabilities of Algorithm  \ref{algo:NMLS} numerically.  Throughout,  we use $\|\gmap_{\ssizek}(u_k)\|_\Hs \leq \tol$ with some tolerance $\tol > 0$ as termination condition,  as it is proposed in Section \ref{sec:problem_algorithm}.  Our codes are implemented in Python 3 and use FEniCS (see \cite{fenics}) for the matrix assembly.   Sparse memory management and computations are implemented with SciPy (see \cite{scipy}). All computations below are run on an Ubuntu 22.04 notebook with 32 GB main memory and an Intel Core i7-8565U CPU.  

We will also compare different step-size  approaches for the iterations \eqref{eq:update_v1}  with respect to gradient-like evaluations and function evaluations as introduced in Theorem \ref{thm:complexity}.   We consider a fixed step-size, different combination of  BB-type step-sizes presented in Section \ref{sec:problem_algorithm} and BB-type step-sizes incorporated  with the (non-)monotone linesearch approach.

Note that for problems governed by nonlinear PDEs such as \eqref{eq:opt_problem_pde_e} and \eqref{eq:opt_problem_pde_p},  any gradient-like  $\gmap_\ssize$  evaluation requires solving a nonlinear state equation and a linear adjoint equation and any function $\cost$ evaluation  is involved with solving a nonlinear state equation.  Furthermore,  the number of gradient-like evaluations corresponds to the number of iterations $k$ of Algorithm \ref{algo:NMLS}.

\begin{example}[Elliptic model problem]
\label{sec:numerics:elliptic}
In this example,   we consider problem \eqref{eq:opt_problem_pde_e}.  For  the spatial discretization,  we follow a discretize-before-optimize approach and use $P_1$-type finite elements on a Friedrichs-Keller triangulation of the spatial spatial domain $\Omega$. To efficiently evaluate the nonlinearity, we resort to mass lumping.  For the numerical tests,  we choose the parameters summarized in Table \ref{tab:elliptic_parameters_general}. Note that for the fixed step-size approach $\alpha = \alpha_0$ is used.
\begin{table}[!htb]
\begin{center}
\resizebox{\linewidth}{!}{
\begin{tabular}{>{$}c<{$} >{$}c<{$} >{$}c<{$} >{$}c<{$} >{$}c<{$} >{$}c<{$} >{$}c<{$} >{$}c<{$} >{$}c<{$} >{$}c<{$} >{$}c<{$} >{$}c<{$} >{$}c<{$} >{$}c<{$}}
\toprule
\multicolumn{7}{c}{optimization problem} & \multicolumn{7}{c}{Algorithm \ref{algo:NMLS}}\\ \hline
\multicolumn{1}{c}{$\Omega$} & \multicolumn{1}{c}{$\kappa$} & \multicolumn{1}{c}{$\sigma$} & \multicolumn{1}{c}{$\lambda$} & \multicolumn{1}{c}{$y_d$} & \multicolumn{1}{c}{$\ulb$} & \multicolumn{1}{c}{$\uub$} & \multicolumn{1}{c}{$\tol$} & \multicolumn{1}{c}{$\ssizelb$} & \multicolumn{1}{c}{$\ssizeub$} & \multicolumn{1}{c}{$\ssize_0$} & \multicolumn{1}{c}{$\eta$} & \multicolumn{1}{c}{$\delta$} & \multicolumn{1}{c}{$\mmax$}\\ \hline\hline
(0,1)^2  & 10^{-2} & 10^{-4} & 10^{-3} & \mathrm{see\; Figure\; \ref{fig:elliptic_desired}} & -3 & 2 & 10^{6} & 10^{-4} & 10^{2} & 10 & 8 & 0.9 & 8\\
\bottomrule
\end{tabular}
}
\caption{Example \ref{sec:numerics:elliptic}: Parameter setting.}
\label{tab:elliptic_parameters_general}
\end{center}
\end{table}
The desired state,  the optimal state,  and the optimal control (see Section \ref{sec:problem_algorithm}) are illustrated in Figure \ref{fig:elliptic_state_control_desired}.  We can see that the bounds are active for the control,  though no strong sparsity is promoted,  due to the choices of $\lambda$ and $\sigma$.     
\begin{figure}
 \centering
\subfigure[Desired State]
	{
		\includegraphics[height=3cm,width=3.7cm]{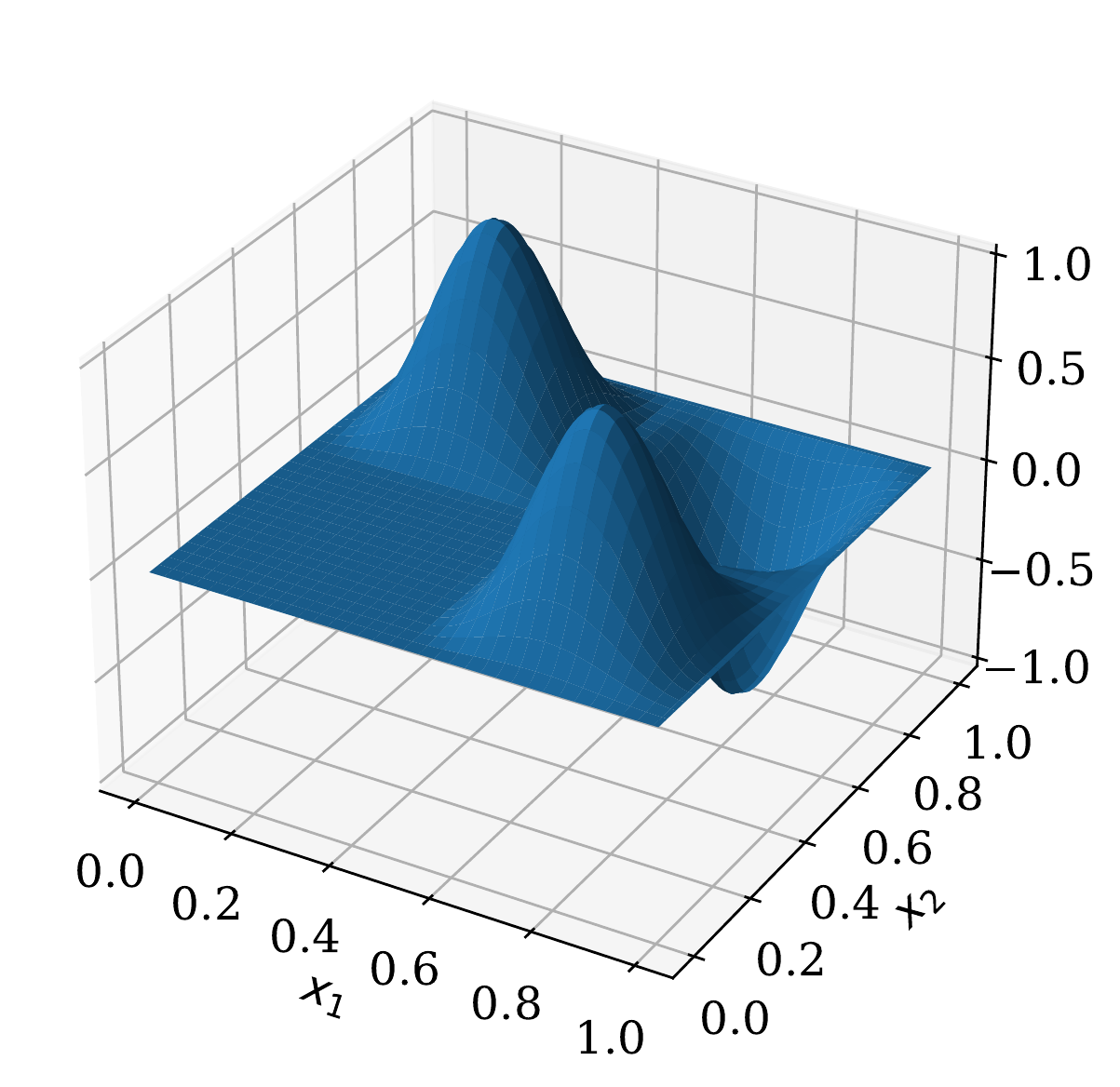}
		\label{fig:elliptic_desired}

	}
\subfigure[Optimal State]
	{
		\includegraphics[height=3cm,width=3.7cm]{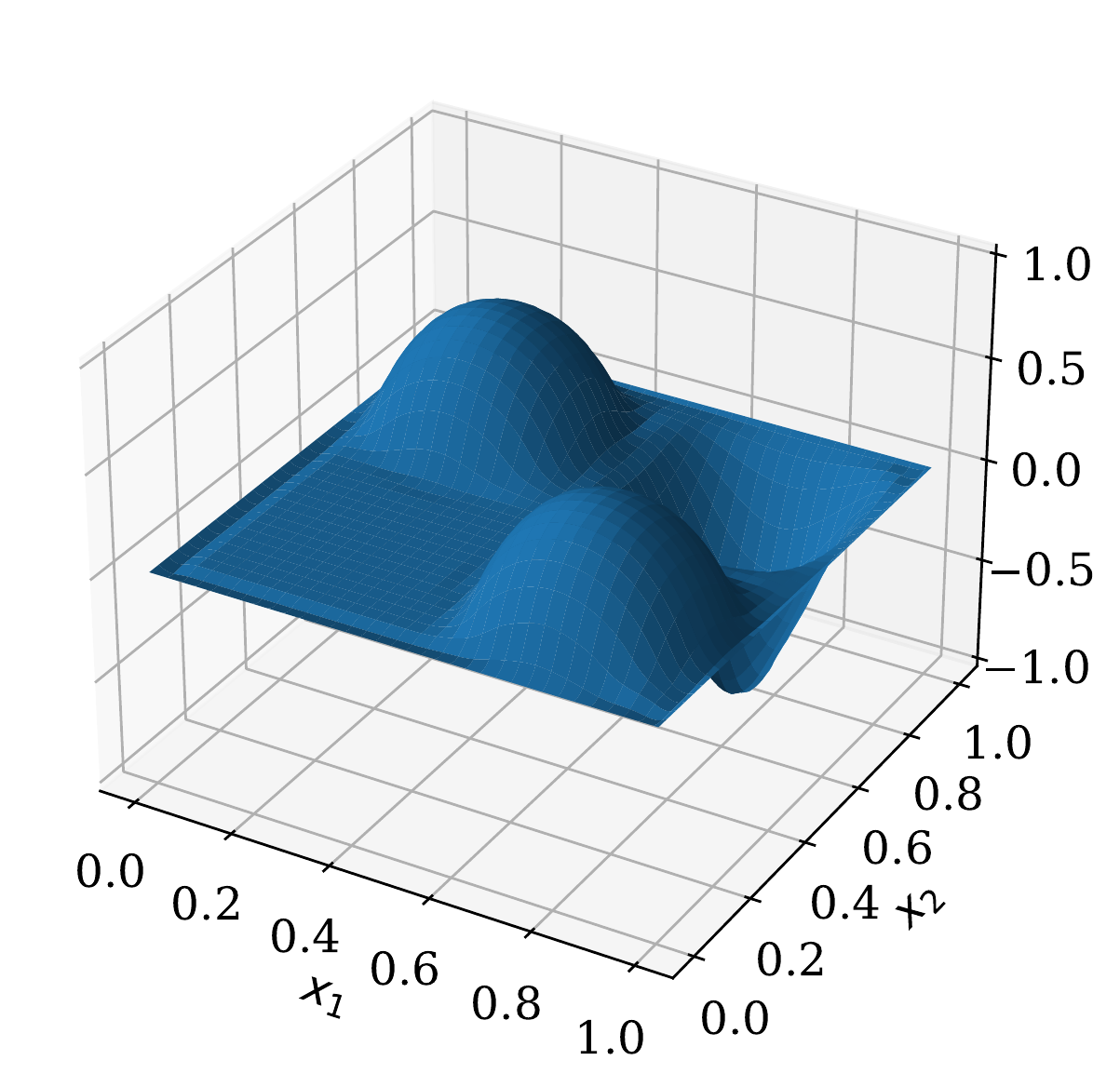}
	}
\subfigure[Optimal Control]
	{
		\includegraphics[height=3cm,width=3.7cm]{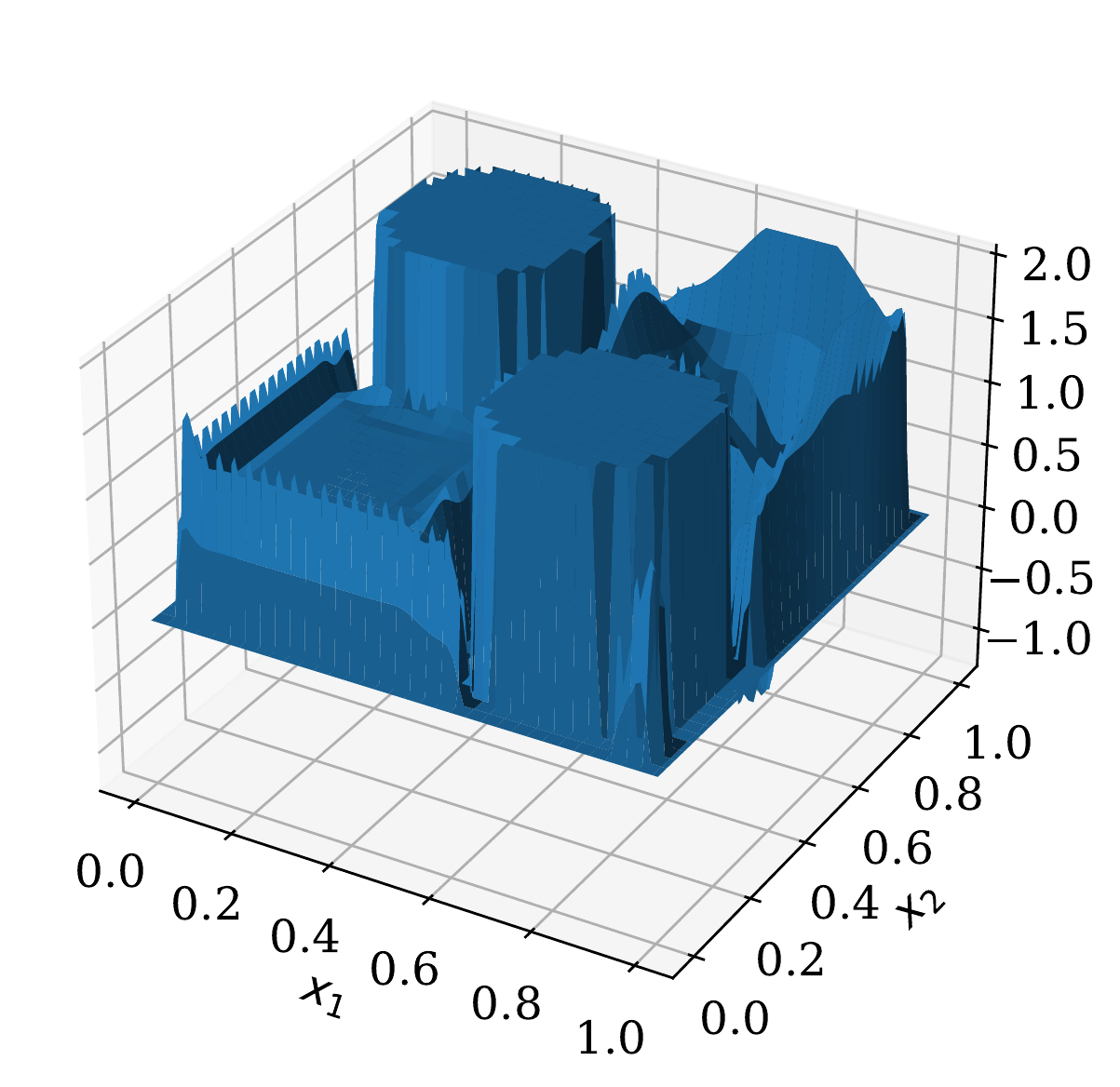}
	}
 \caption{Example \ref{sec:numerics:elliptic}:  The desired state $\yd$,   the optimal state,  and the optimal control (left to right).}
\label{fig:elliptic_state_control_desired}
\end{figure}
We compare the different BB-type step-sizes presented in Section \ref{sec:problem_algorithm} with the baseline approach of using a fixed step-size in \eqref{eq:update_v1} and with a (non-)monotone linesearch approach. The results regarding computational time,  function evaluations,  and gradient-like evaluations are gathered in Table \ref{tab:elliptic_results_BB}.  For the linesearch methods, the most volatile of the novel BB-type step-size updates is used as baseline,  i.e.  BB1b.

\begin{table}[!htb]
\begin{center}
\resizebox{\linewidth}{!}{
\begin{tabular}{>{$}c<{$} >{$}c<{$} >{$}c<{$} >{$}c<{$} >{$}c<{$} >{$}c<{$} >{$}c<{$} >{$}c<{$} >{$}c<{$} >{$}c<{$} >{$}c<{$} >{$}c<{$} >{$}c<{$}}
\toprule
 & \multicolumn{1}{c}{fixed} & \multicolumn{1}{c}{BB1a} & \multicolumn{1}{c}{BB2a} & \multicolumn{1}{c}{ABBa} & \multicolumn{1}{c}{BB1b} & \multicolumn{1}{c}{BB2b} & \multicolumn{1}{c}{ABBb} & \multicolumn{1}{c}{nonmon. LS (BB1b)} & \multicolumn{1}{c}{mon. LS (BB1b)}\\ \hline\hline
\text{grad.-like eval.}  & 153662 & 618 & 1046 & 571 & 941 & 608 & 383 & 697 & 991 \\
\text{fun. eval.}  & 0 & 0 & 0 & 0 & 0 & 0 & 0 & 887 & 1527 \\
\text{time} [s] & 1.56 \cdot 10^3 & 1.01 \cdot 10^1 & 1.36 \cdot 10^1 & 1.55 \cdot 10^1 & 8.39 \cdot 10^0 & 7.92 \cdot 10^0 & 5.65 \cdot 10^0 & 2.21 \cdot 10^1 & 3.30 \cdot 10^1 \\
\bottomrule
\end{tabular}}
\caption{ Example \ref{sec:numerics:elliptic}: Numerical results for fixed step-size,  different spectral gradient methods as introduced in Section \ref{sec:problem_algorithm},  and a (non-)monotone linesearch method with respect to BB1b.}
\label{tab:elliptic_results_BB}
\end{center}
\end{table}
As can be seen from Table \eqref{tab:elliptic_results_BB},  for this example,  all other approaches outperform the one with fixed step-size by a huge margin of about two orders of magnitude.  The alternating BB-methods appear to be more efficient compared to   the single BB-updates for both the old and novel step-sizes.  Furthermore,  the novel step-sizes do not always outperform the old ones,  but in the superior alternating BB-method, they do by a significant margin.  In other words, they are competitive to the old ones.  All of these considerations are valid for both computational time and gradient-like evaluations.  Function evaluations are only needed if a linesearch method is used.  Compared to the baseline method BB1b,  the nonmonotone linesearch method needs about $250$ fewer gradient-like evaluations,  but this comes at the cost of $887$ additional function evaluations for the linesearch and this results in an increased overall computational time.  Compared to a monotone linesearch, the nonmonotone approach performs significantly better. The convergence behavior is also visualized in Figure \ref{fig:elliptic_results}.
\begin{figure}
 \centering
\subfigure
	{
		\includegraphics[height=3.5cm,width=4.5cm]{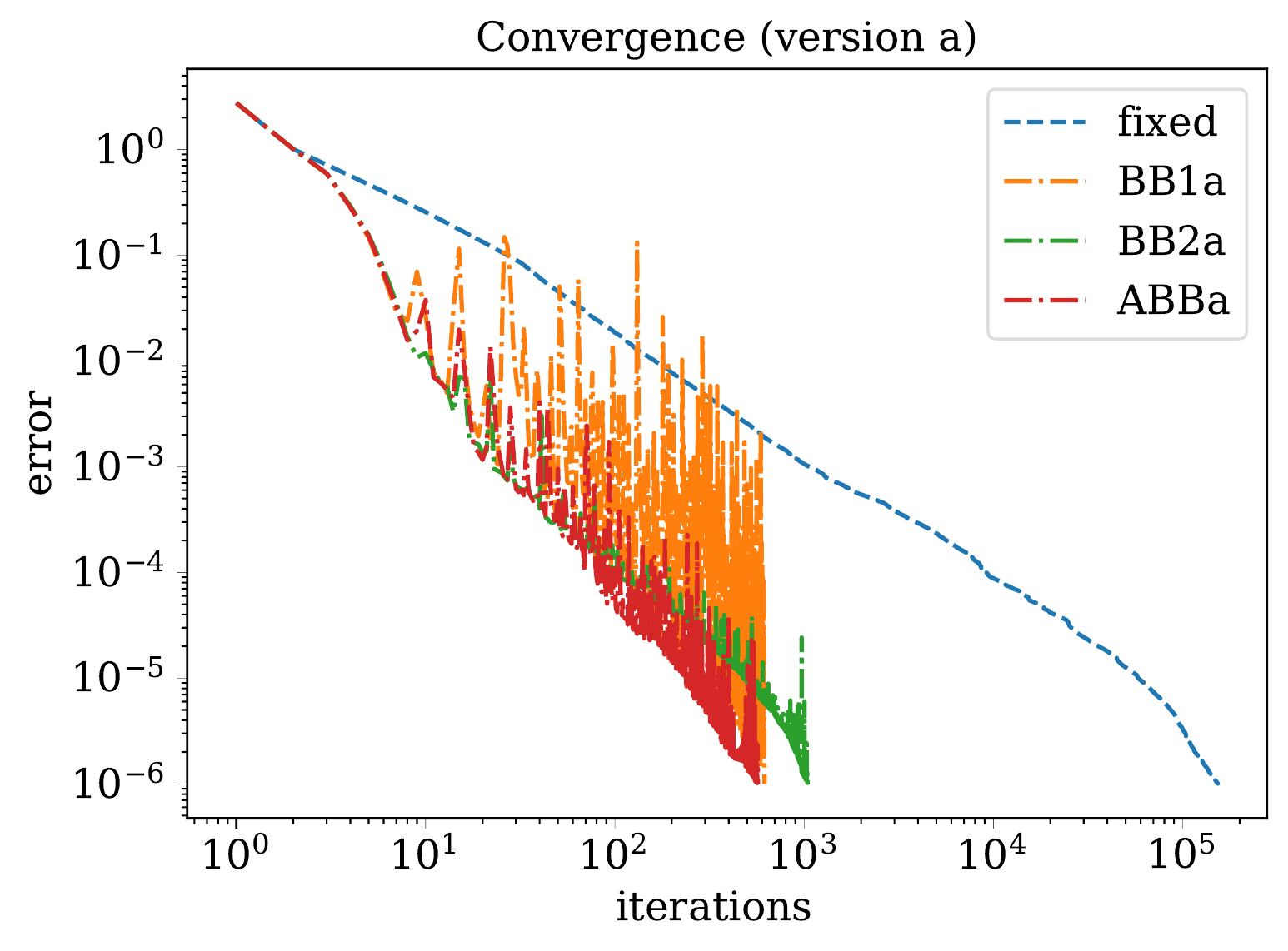}
	}
\subfigure
	{
		\includegraphics[height=3.5cm,width=4.5cm]{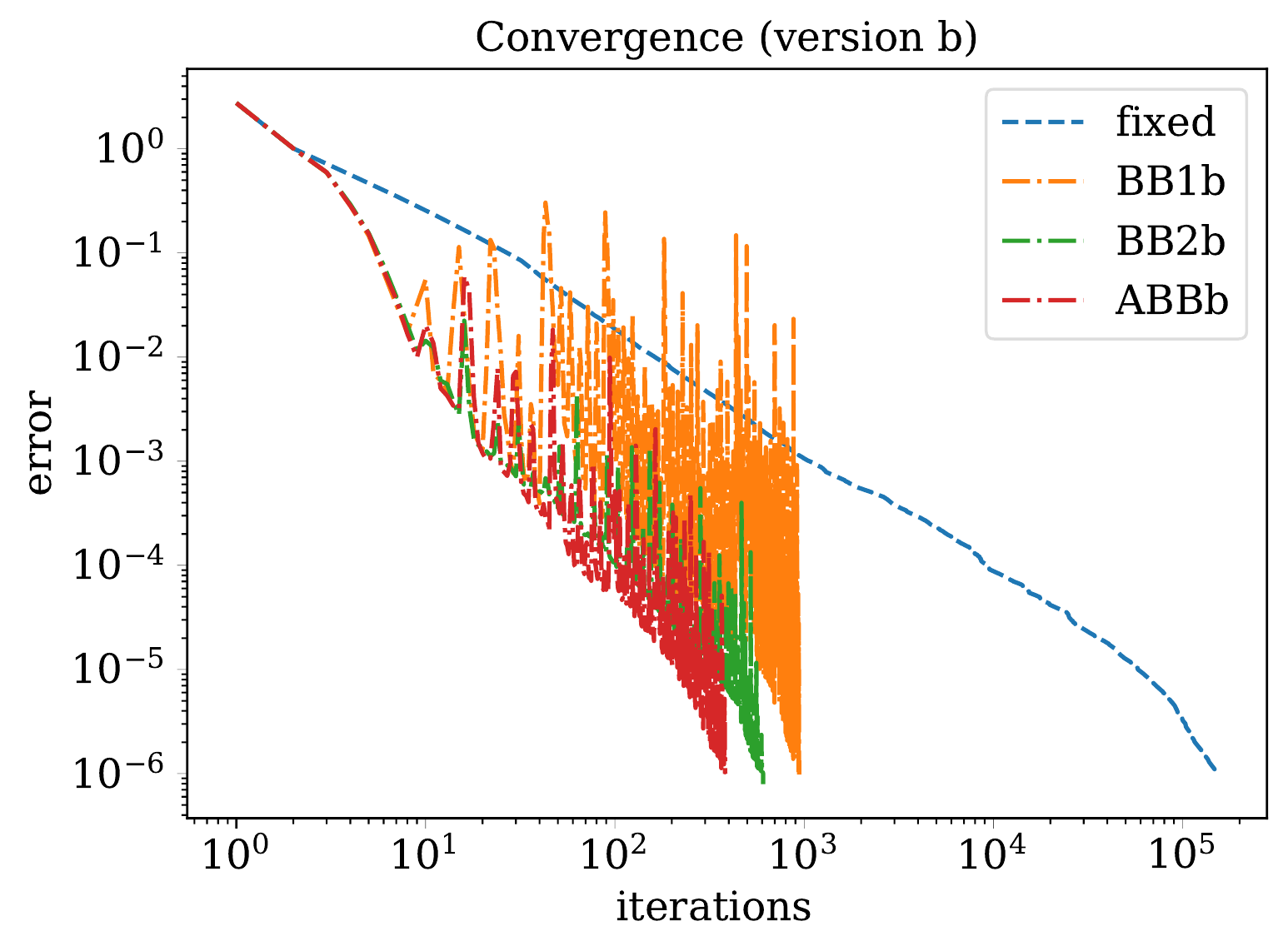}
	}
\subfigure
	{
		\includegraphics[height=3.5cm,width=4.5cm]{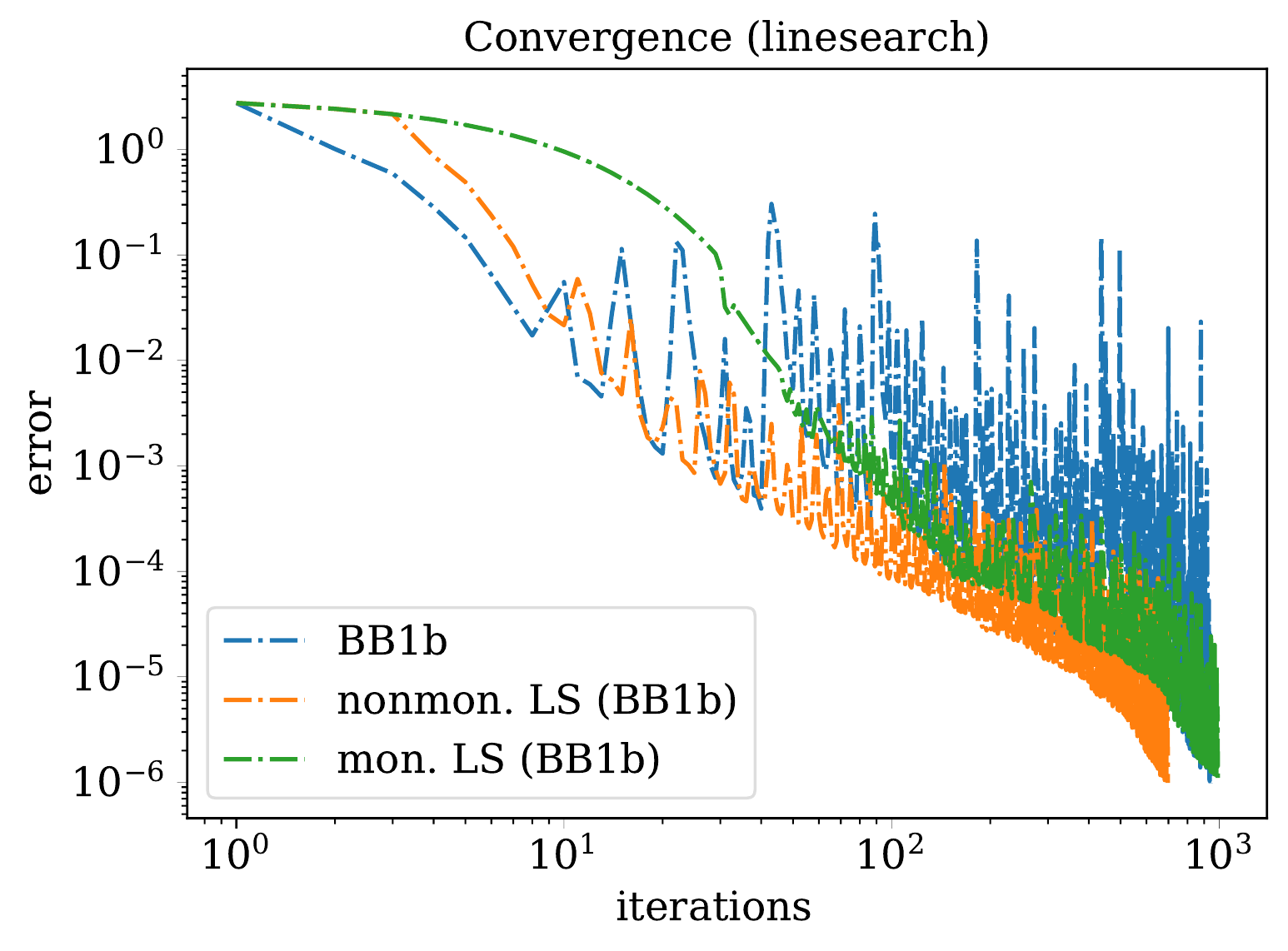}
		\label{Fig:eliptic_line}
	}
 \caption{Example \ref{sec:numerics:elliptic}:  Convergence of Algorithm \ref{algo:NMLS}. "Error" refers to $\|\gmap_{\ssizek}(u_k)\|_\Hs$ at the current iterate.}
\label{fig:elliptic_results}
\end{figure}
Figure \ref{fig:elliptic_noncon} presents an example,   where a BB-type step-size update without linesearch fails to converge.  In this example,  we start the algorithms  with  $\ssize_0 = 1$ instead of $\ssize_0 = 10$.   This confirms the necessity of incorporating a linesearch strategy to ensure convergence.
\begin{figure}
 \centering
		\includegraphics[height=3.5cm,width=4.5cm]{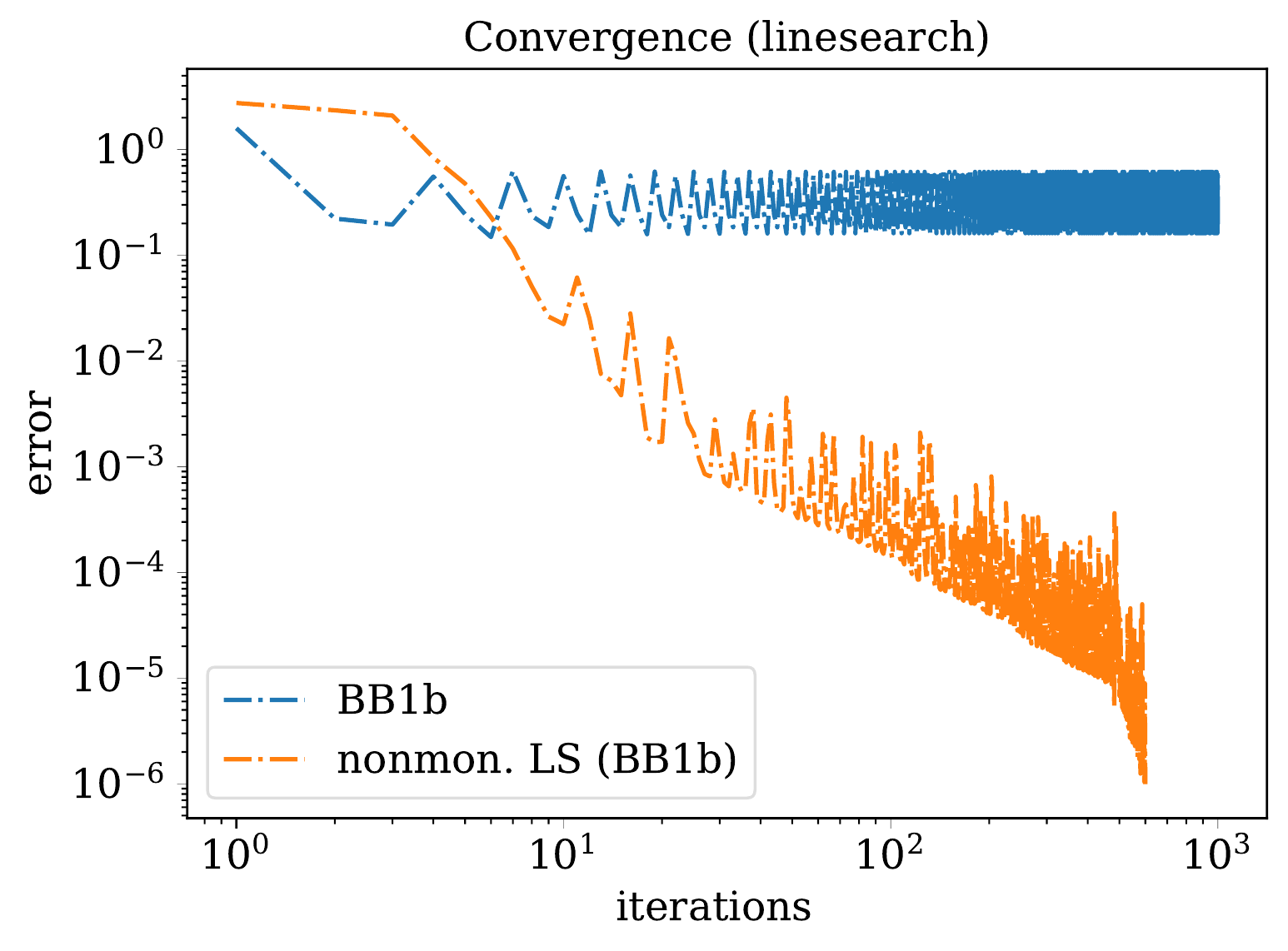}
	\caption{Example \ref{sec:numerics:elliptic}: Example of non-convergence without linesearch.  "Error" refers to $\|\gmap_{\ssizek}(u_k)\|_\Hs$ at the current iterate.}
\label{fig:elliptic_noncon}
\end{figure}
\end{example}

\begin{example}[Parabolic model problem]
\label{sec:numerics:parabolic}
In this example,  we consider the model problem \eqref{eq:opt_problem_pde_p}.  Here,  the spatial domain is discretized in the same manner as the one of the elliptic problem in Example \ref{sec:numerics:elliptic}.   Moreover, for the temporal discretization,  we use the Crank Nicolson/Adams Bashforth scheme  \cite{MR2034880}.  In this scheme,  the implicit Crank Nicolson scheme is used  except for the nonlinear terms which are treated using  the explicit Adams Bashforth scheme and mass lumping.  For the numerical tests we choose the parameters summarized in Table \ref{tab:parabolic_parameters_general}. Note that again for the fixed step-size approach $\ssize = \ssize_0$ is used.
\begin{table}[!htb]
\begin{center}
\resizebox{\linewidth}{!}{
\begin{tabular}{>{$}c<{$} >{$}c<{$} >{$}c<{$} >{$}c<{$} >{$}c<{$} >{$}c<{$} >{$}c<{$} >{$}c<{$} >{$}c<{$} >{$}c<{$} >{$}c<{$} >{$}c<{$} >{$}c<{$} >{$}c<{$} >{$}c<{$}}
\toprule
\multicolumn{7}{c}{optimization problem} & \multicolumn{7}{c}{Algorithm \ref{algo:NMLS}}\\ \hline
\multicolumn{1}{c}{$\Omega$} & \multicolumn{1}{c}{$T$} & \multicolumn{1}{c}{$\kappa$} & \multicolumn{1}{c}{$\lambda$} & \multicolumn{1}{c}{$y_d$} & \multicolumn{1}{c}{$\ulb$} & \multicolumn{1}{c}{$\uub$} & \multicolumn{1}{c}{$\tol$} & \multicolumn{1}{c}{$\ssizelb$} & \multicolumn{1}{c}{$\ssizeub$} & \multicolumn{1}{c}{$\ssize_0$} & \multicolumn{1}{c}{$\eta$} & \multicolumn{1}{c}{$\delta$} & \multicolumn{1}{c}{$\mmax$}\\ \hline\hline
(0,1)^2  & 1 & 10^{-2} & 10^{-2} & \mathrm{see\; Figure\; \ref{fig:parabolic_desired}} & -100 & 100 & 10^{-6} & 10^{-4} & 10^{2} & 10 & 4 & 0.8 & 4\\
\bottomrule
\end{tabular}
}
\caption{Example \ref{sec:numerics:parabolic}: Parameter setting.}
\label{tab:parabolic_parameters_general}
\end{center}
\end{table}
The desired state,  the optimal state and the optimal control  at time instances $t = 0, 0.25, 0.5, 0.75$ are depicted  in Figures \ref{fig:parabolic_desired}, \ref{fig:parabolic_state},  and \ref{fig:parabolic_control},  respectively.   We can clearly see sparsity in space for the control. Note hat also sparsity in time can be observed in the sense that the control stays zero on an interval between time instances $t =0.25$ and $t=0.75$.
\begin{figure}
 \centering
\subfigure[$t = 0$]
	{
		\includegraphics[height=3.2cm,width=3.2cm]{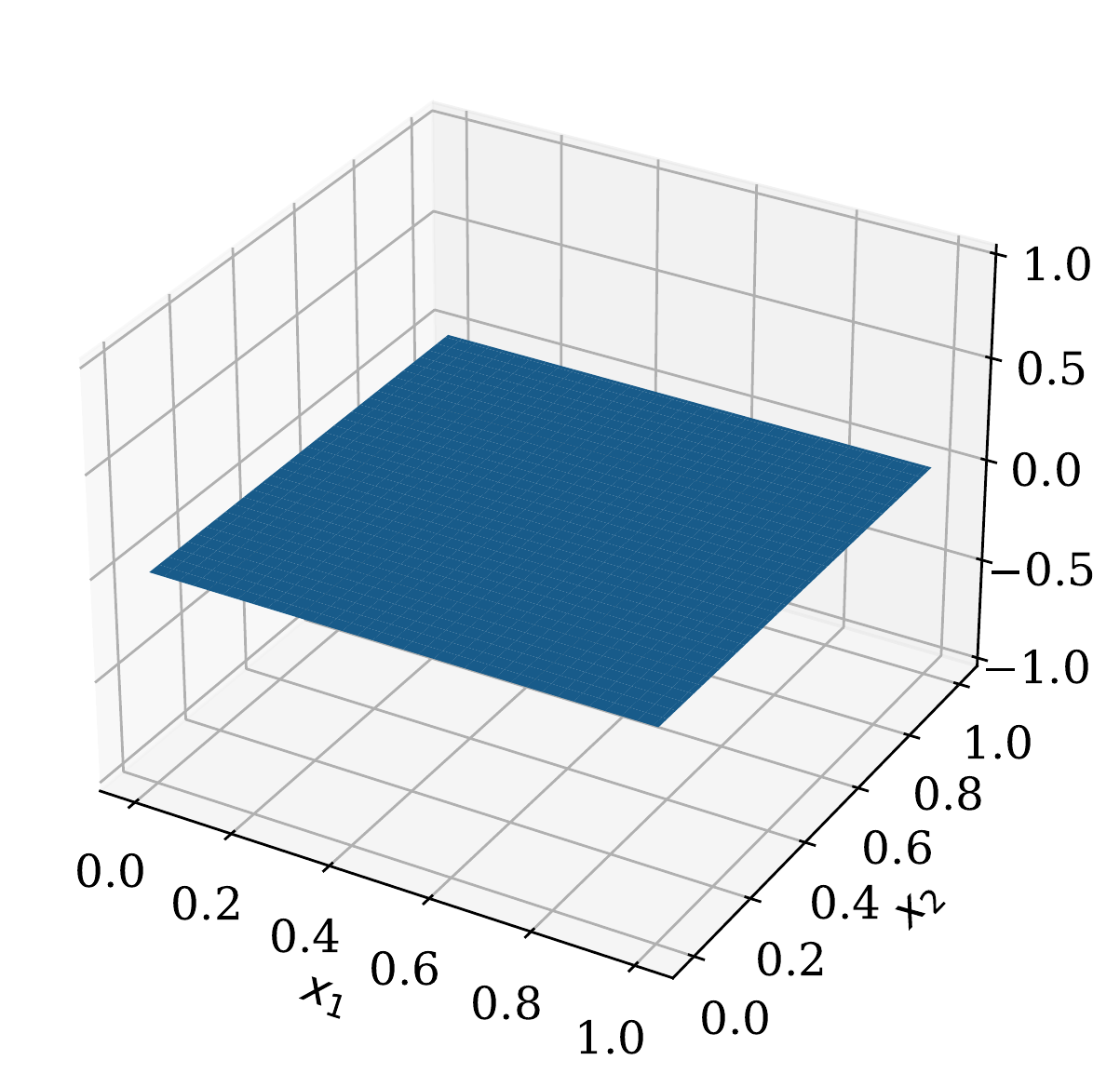}
	}
\subfigure[$t =0.25$]	
	{
		\includegraphics[height=3.2cm,width=3.2cm]{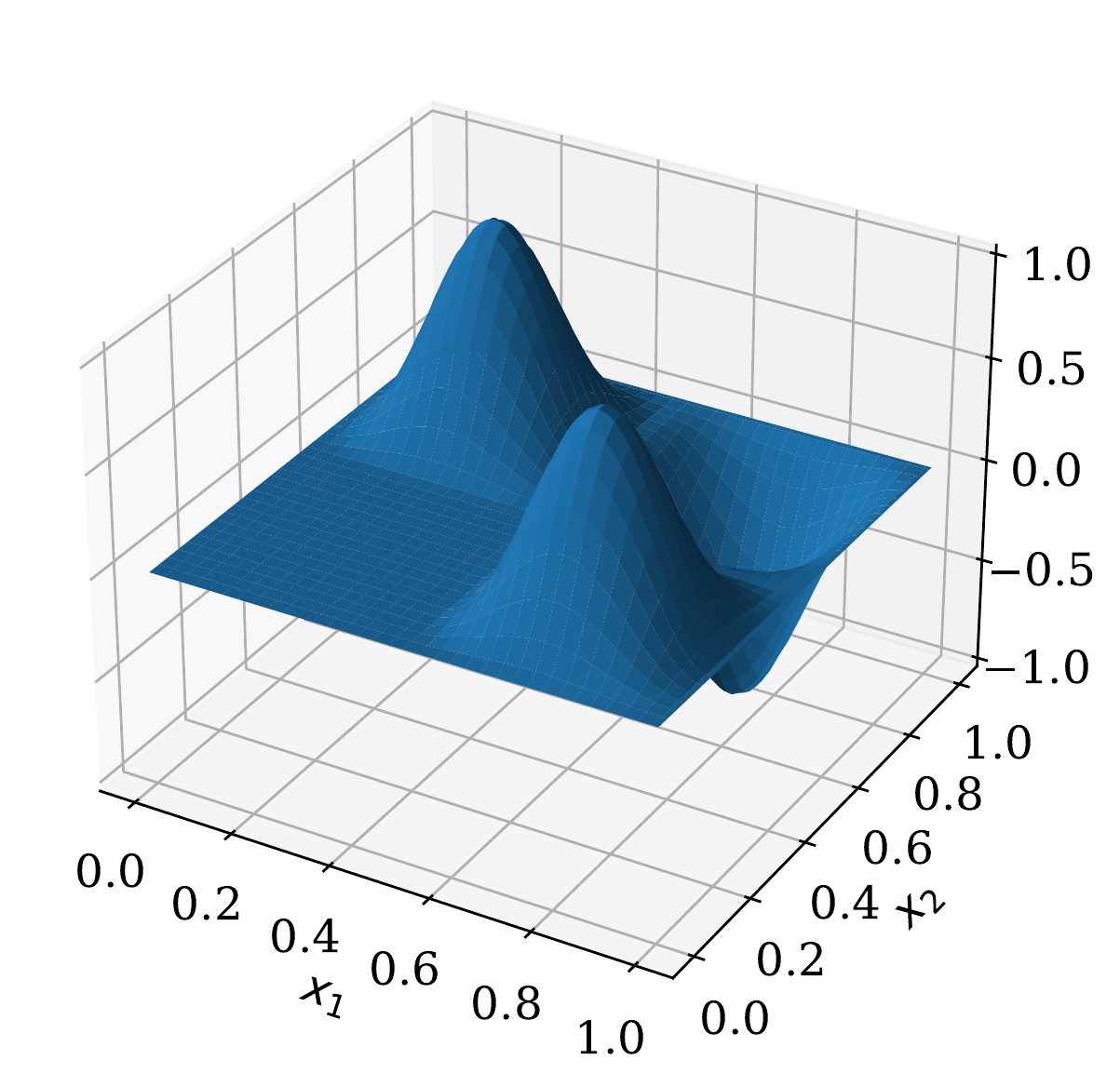}
	}
\subfigure[$t =0.5$]
	{
		\includegraphics[height=3.2cm,width=3.2cm]{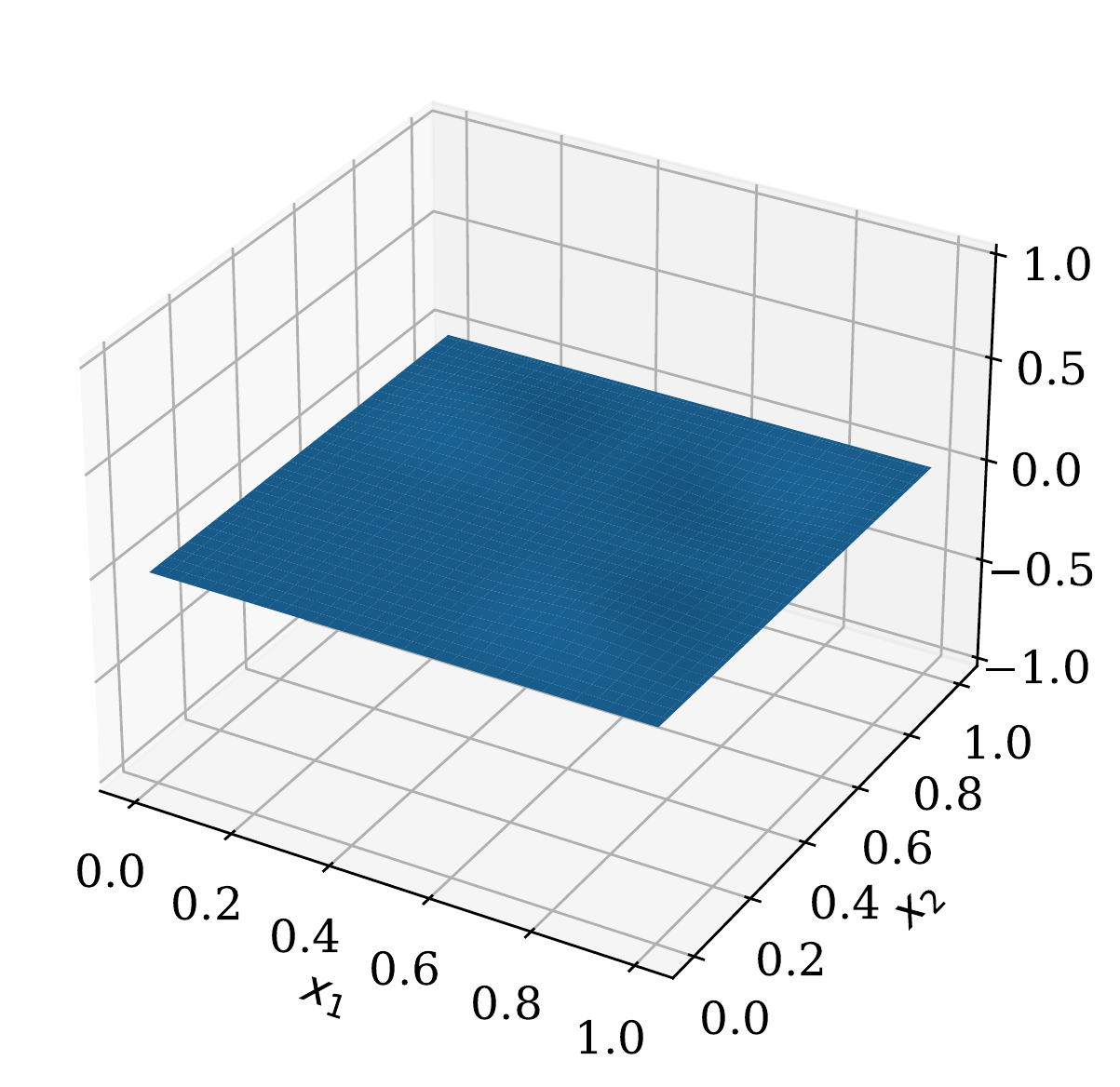}
	}
\subfigure[$t =0.75$]	
	{
		\includegraphics[height=3.2cm,width=3.2cm]{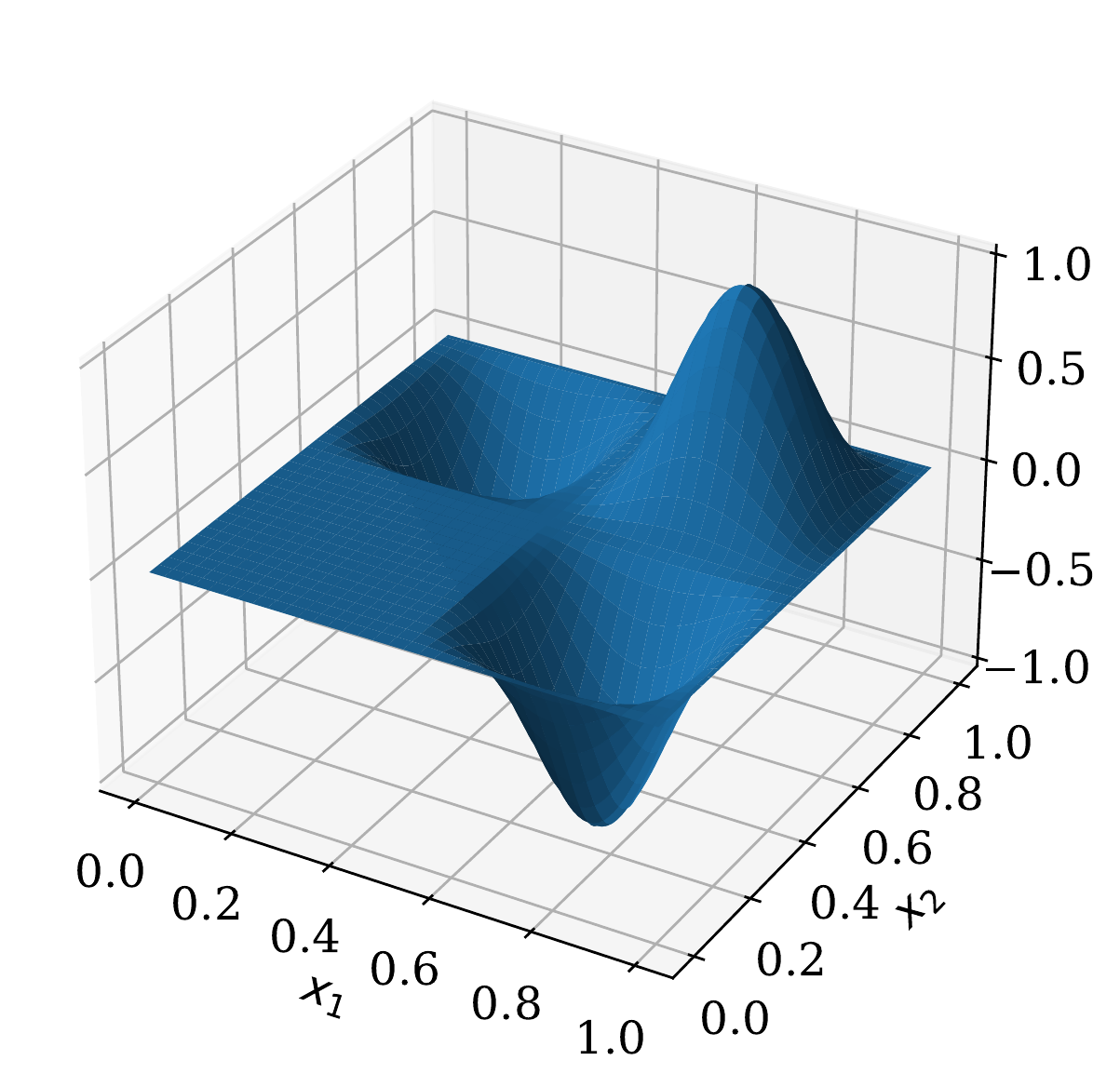}
	}
	\caption{Example \ref{sec:numerics:parabolic}: Snapshots of the desired state $\yd$ at time instances $0, 0.25, 0.5$ and $0.75$.}
	\label{fig:parabolic_desired}
\subfigure[$t = 0$]
	{
		\includegraphics[height=3.2cm,width=3.2cm]{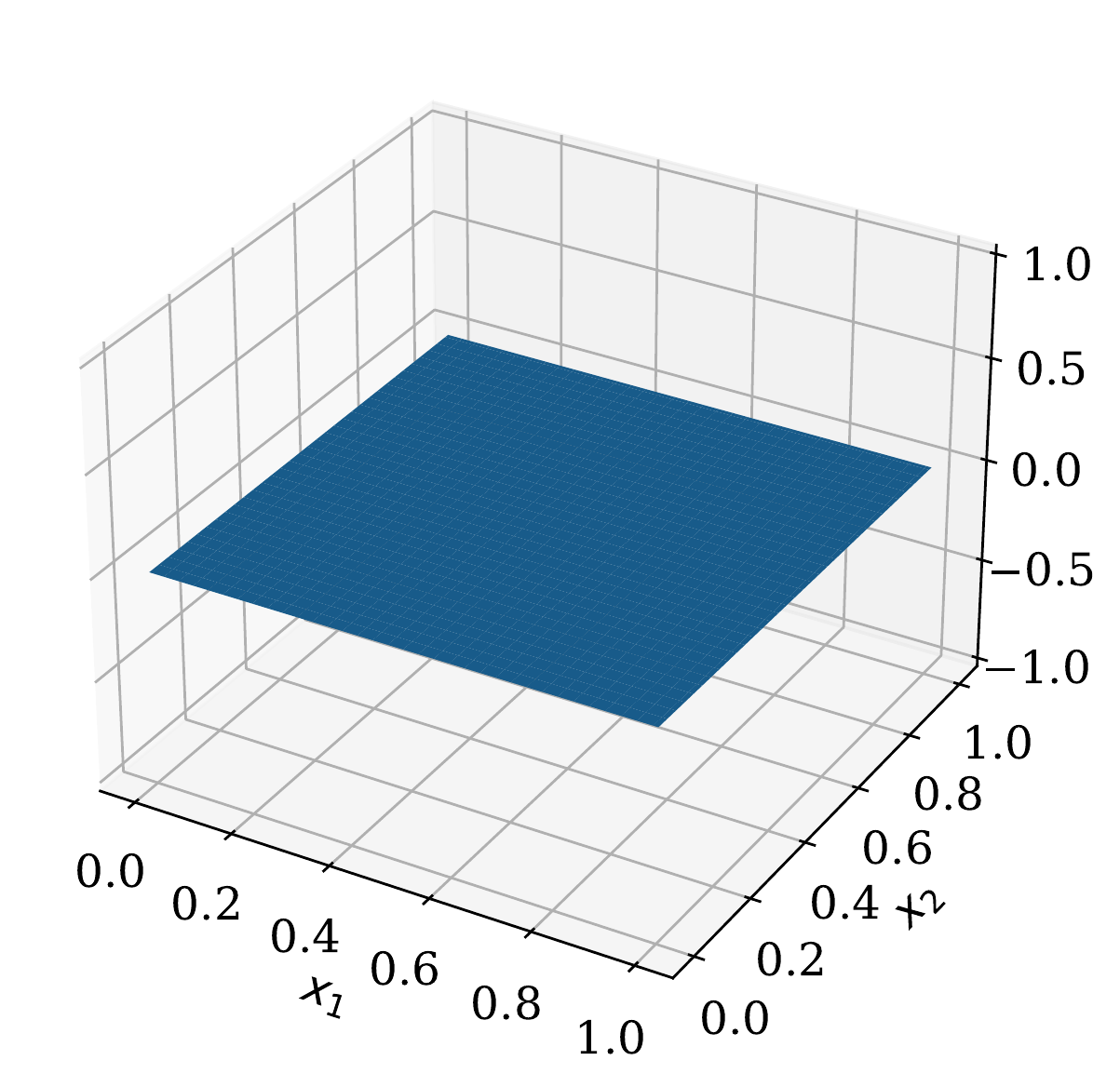}
	}
\subfigure[$t =0.25$]	
	{
		\includegraphics[height=3.2cm,width=3.2cm]{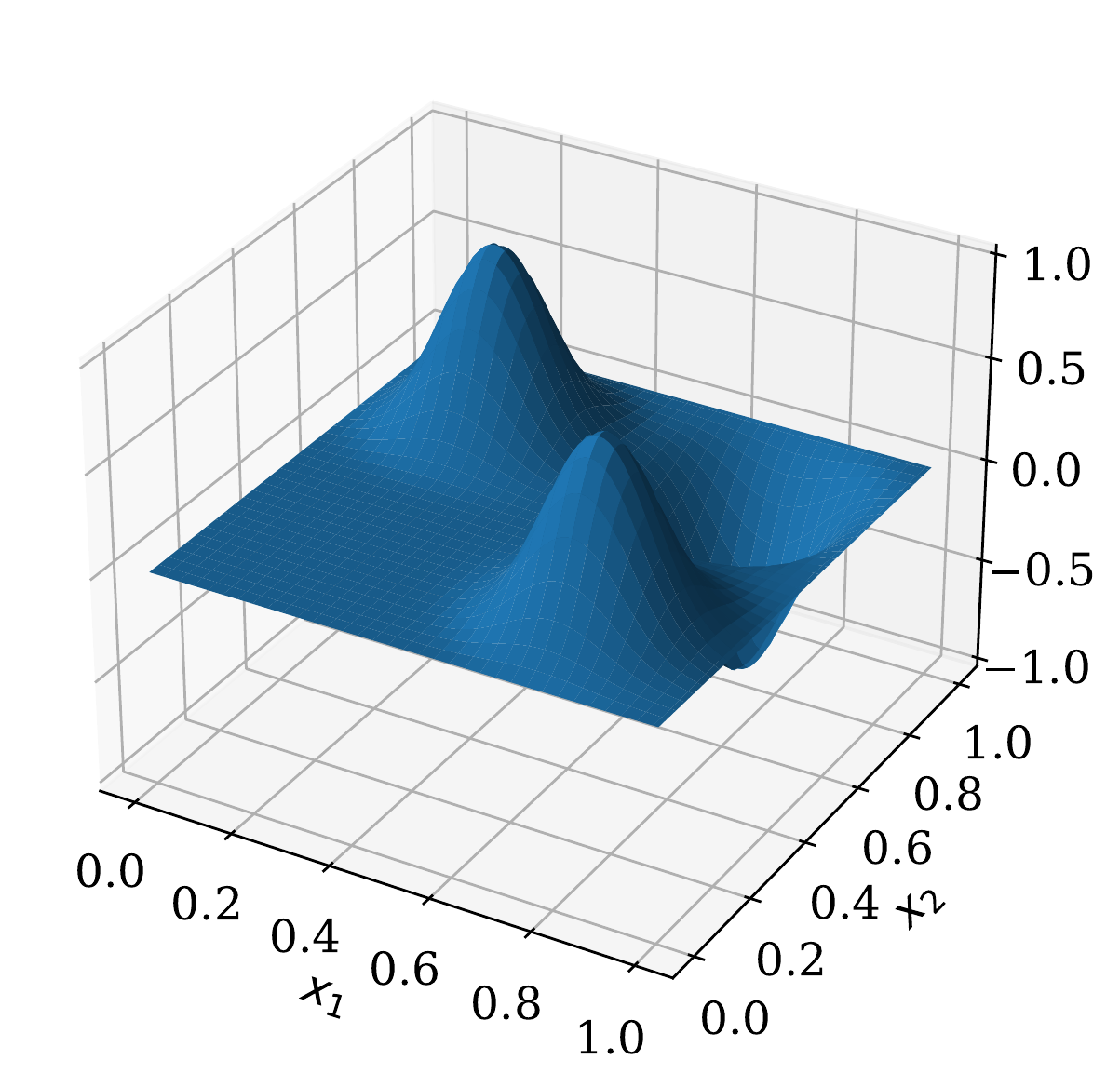}
	}
\subfigure[$t =0.5$]	
	{
		\includegraphics[height=3.2cm,width=3.2cm]{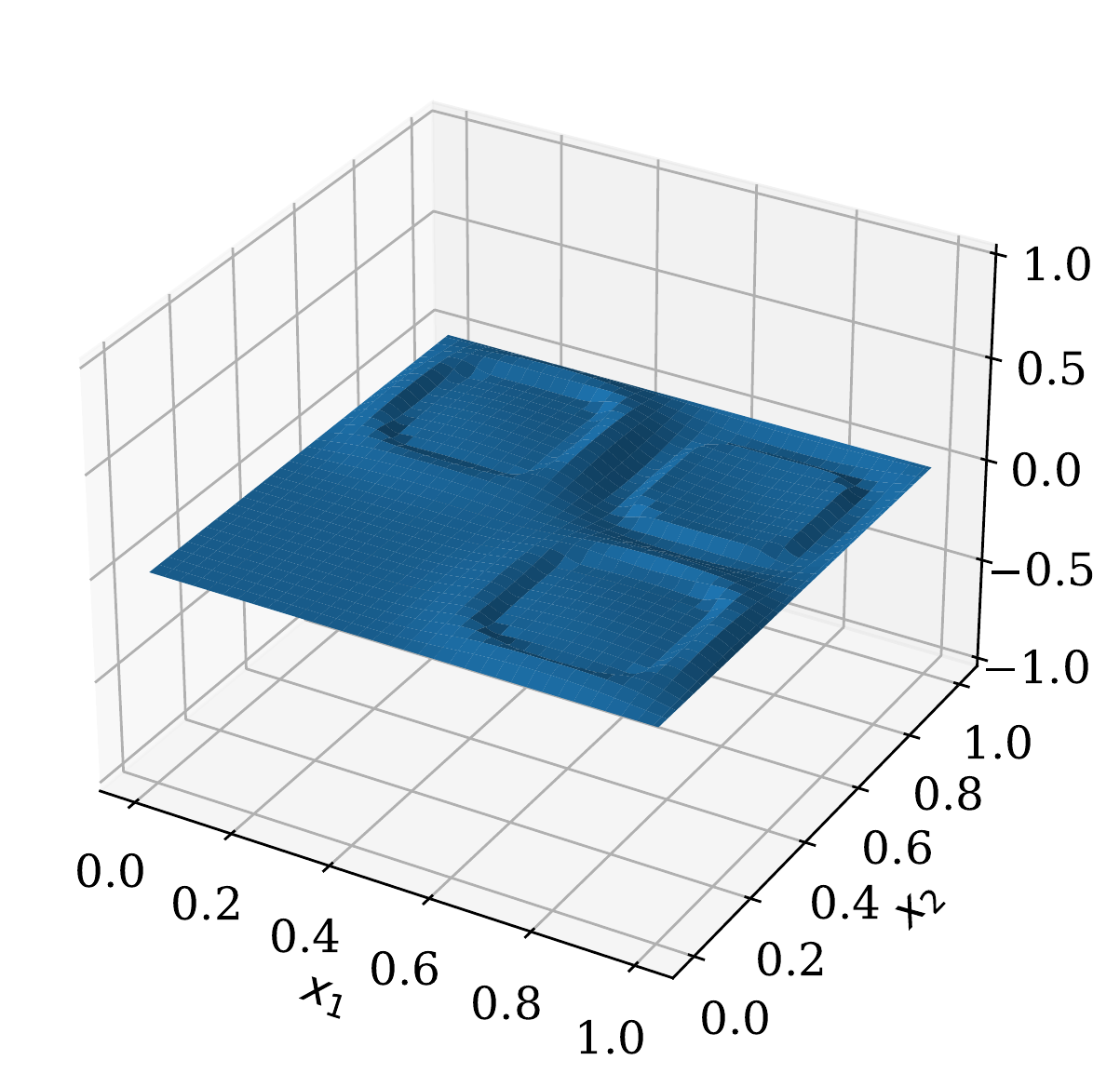}
	}
\subfigure[$t =0.75$]	
	{
		\includegraphics[height=3.2cm,width=3.2cm]{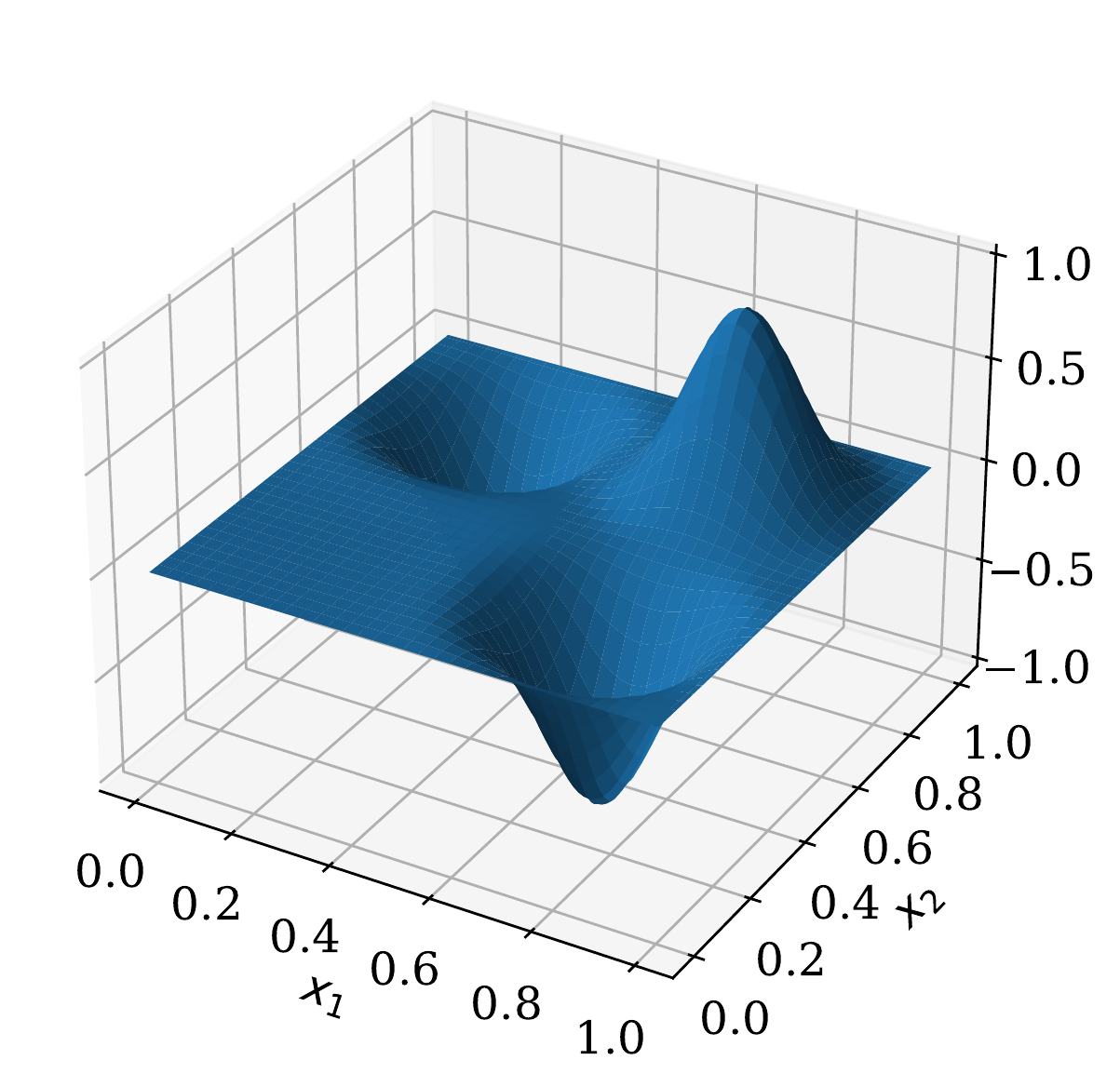}
	}
	\caption{Example \ref{sec:numerics:parabolic}: Snapshots of the optimal state at time instances $0, 0.25, 0.5$ and $0.75$.}
		\label{fig:parabolic_state}
\subfigure[$t = 0$]
	{
		\includegraphics[height=3.2cm,width=3.2cm]{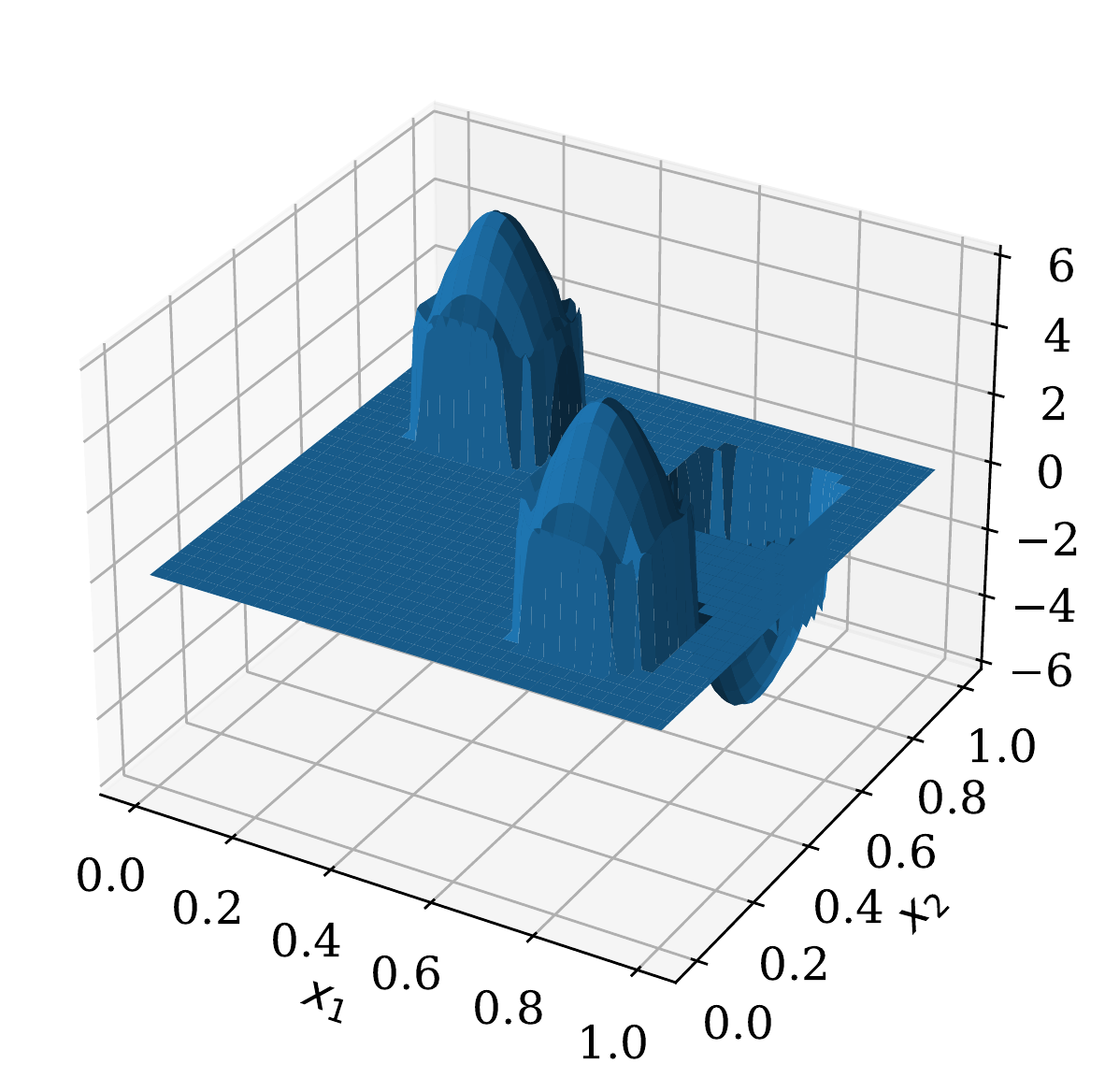}
	}
\subfigure[$t =0.25$]	
	{
		\includegraphics[height=3.2cm,width=3.2cm]{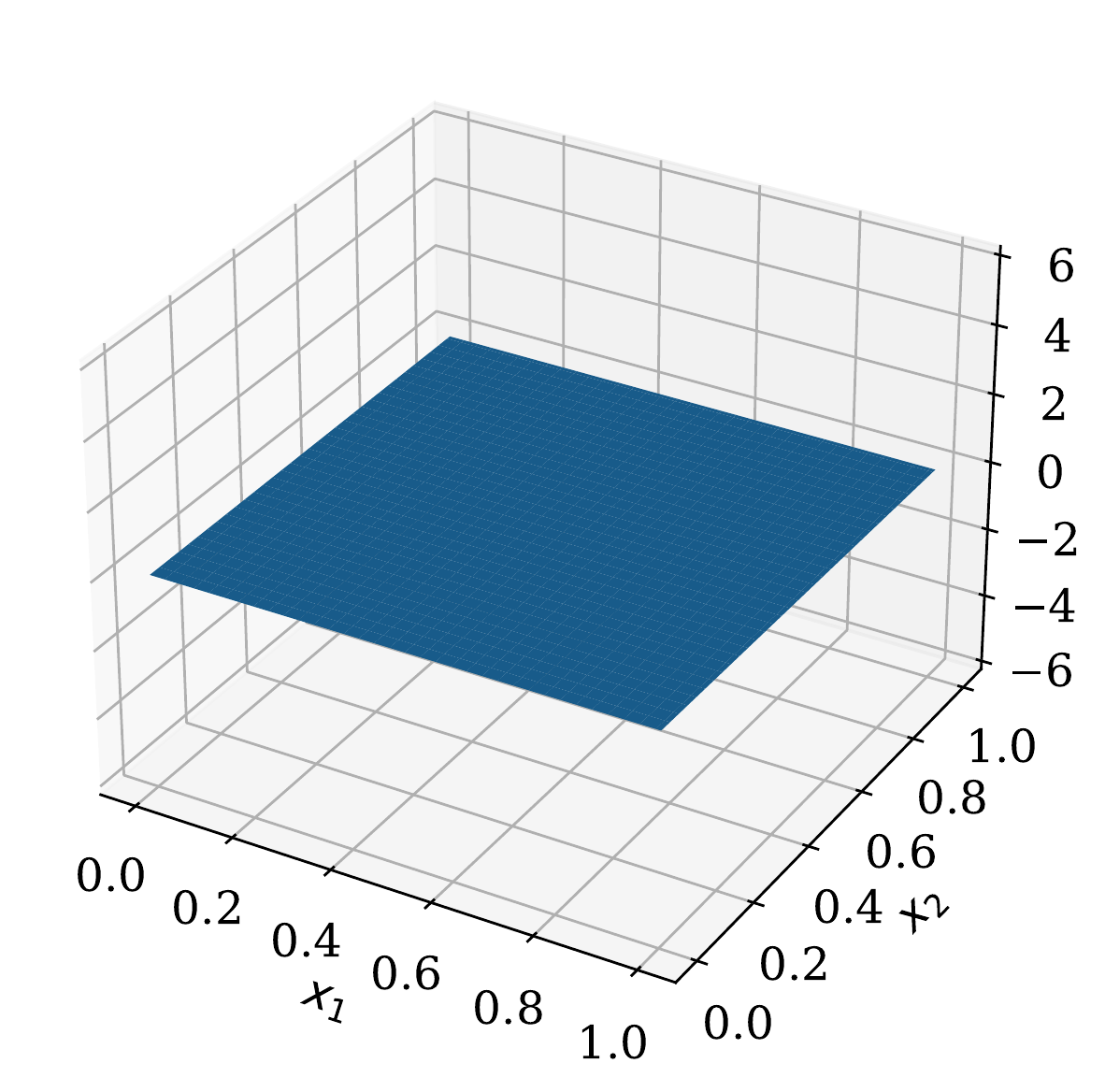}
	}
\subfigure[$t =0.5$]	
	{
		\includegraphics[height=3.2cm,width=3.2cm]{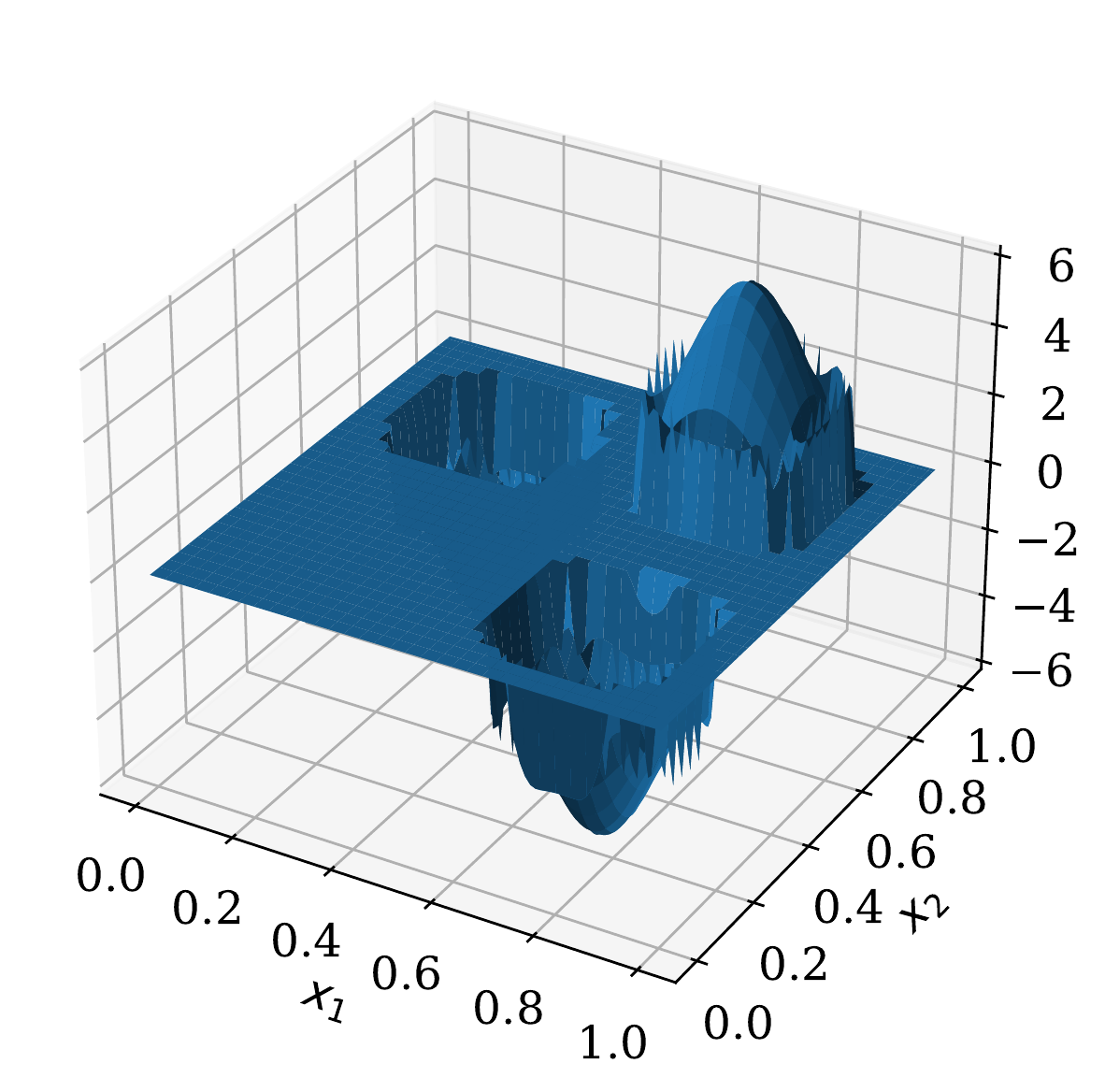}
	}
\subfigure[$t =0.75$]	
	{
		\includegraphics[height=3.2cm,width=3.2cm]{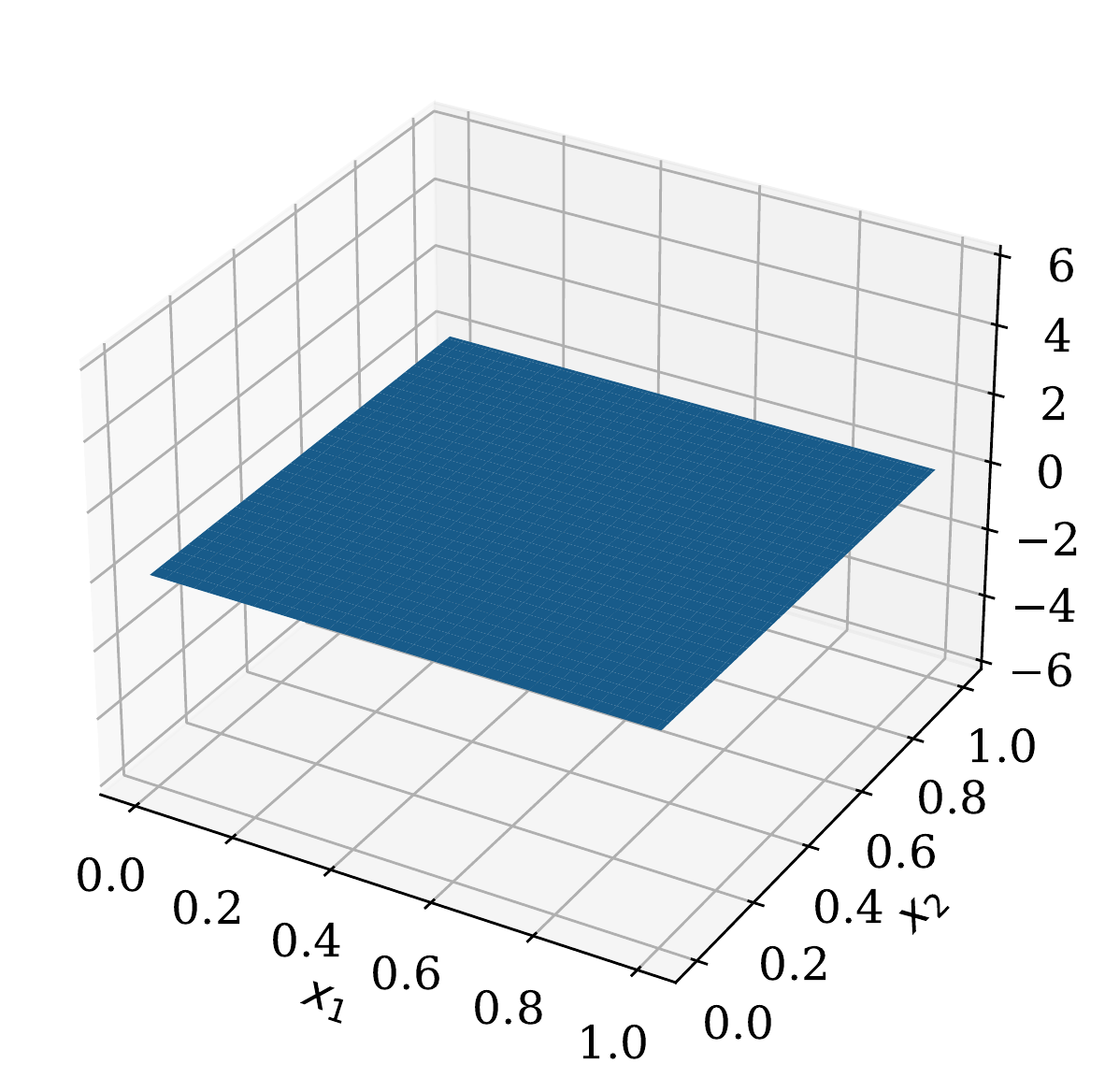}
	}
 \caption{Example \ref{sec:numerics:parabolic}: Snapshots of the optimal control at time instances $0, 0.25, 0.5$ and $0.75$.}
\label{fig:parabolic_control}
\end{figure}
Similarly to Exmaple \ref{sec:numerics:elliptic},    we compare the different BB-type step-sizes presented in Section \ref{sec:problem_algorithm} with the baseline approach of using a fixed step-size and with a (non-)monotone linesearch approach,  and the results regarding computational time, function evaluations and gradient-like evaluations are presented in Table \ref{tab:parabolic_results_BB}.  To reveal the efficiency of the linesearch strategy,  we incorporated the least efficient BB-type step-size,  namely BB1b  with the linesearch strategy.
\begin{table}[!htb]
\begin{center}
\resizebox{\linewidth}{!}{
\begin{tabular}{>{$}c<{$} >{$}c<{$} >{$}c<{$} >{$}c<{$} >{$}c<{$} >{$}c<{$} >{$}c<{$} >{$}c<{$} >{$}c<{$} >{$}c<{$} >{$}c<{$} >{$}c<{$} >{$}c<{$}}
\toprule
 & \multicolumn{1}{c}{fixed} & \multicolumn{1}{c}{BB1a} & \multicolumn{1}{c}{BB2a} & \multicolumn{1}{c}{ABBa} & \multicolumn{1}{c}{BB1b} & \multicolumn{1}{c}{BB2b} & \multicolumn{1}{c}{ABBb} & \multicolumn{1}{c}{nonmon. LS (BB1b)} & \multicolumn{1}{c}{mon. LS (BB1b)}\\ \hline\hline
\text{grad.-like eval.}  & 515044 & 375 & 758 & 784 & 470 & 445 & 221 & 463 & 471 \\
\text{fun. eval.}  & 0 & 0 & 0 & 0 & 0 & 0 & 0 & 639 & 713 \\
\text{time} [s] & 5.66 \cdot 10^4 & 4.28 \cdot 10^1 & 8.70 \cdot 10^1 & 8.89 \cdot 10^1 & 5.35 \cdot 10^1 & 5.08 \cdot 10^1 & 2.50 \cdot 10^1 & 8.65 \cdot 10^1 & 9.57 \cdot 10^1 \\
\bottomrule
\end{tabular}}
\caption{Example \ref{sec:numerics:parabolic}: Numerical results for fixed step-size,  different spectral gradient methods as introduced in Section \ref{sec:problem_algorithm},  and a (non-)monotone linesearch method with respect to BB1b.}
\label{tab:parabolic_results_BB}
\end{center}
\end{table}
All considerations and observations from Example \ref{sec:numerics:elliptic} hold also true for this example.  That is, the iterations \eqref{eq:update_v1} for the choice of the BB-type step-sizes significantly outperform those with fixed step-size, as is to be expected. The novel alternating BB-method outperforms the other approaches. The nonmonotone linesearch method outperforms the monotone version. It also outperforms the BB1b version with respect to gradient-like evaluations,  but the cost of additional $639$ function evaluations again does not pay off with respect to overall computational time.  The convergence behavior is also visualized in Figure \ref{fig:parabolic_results}.
\begin{figure}
 \centering
\subfigure
	{
		\includegraphics[height=3.5cm,width=4.5cm]{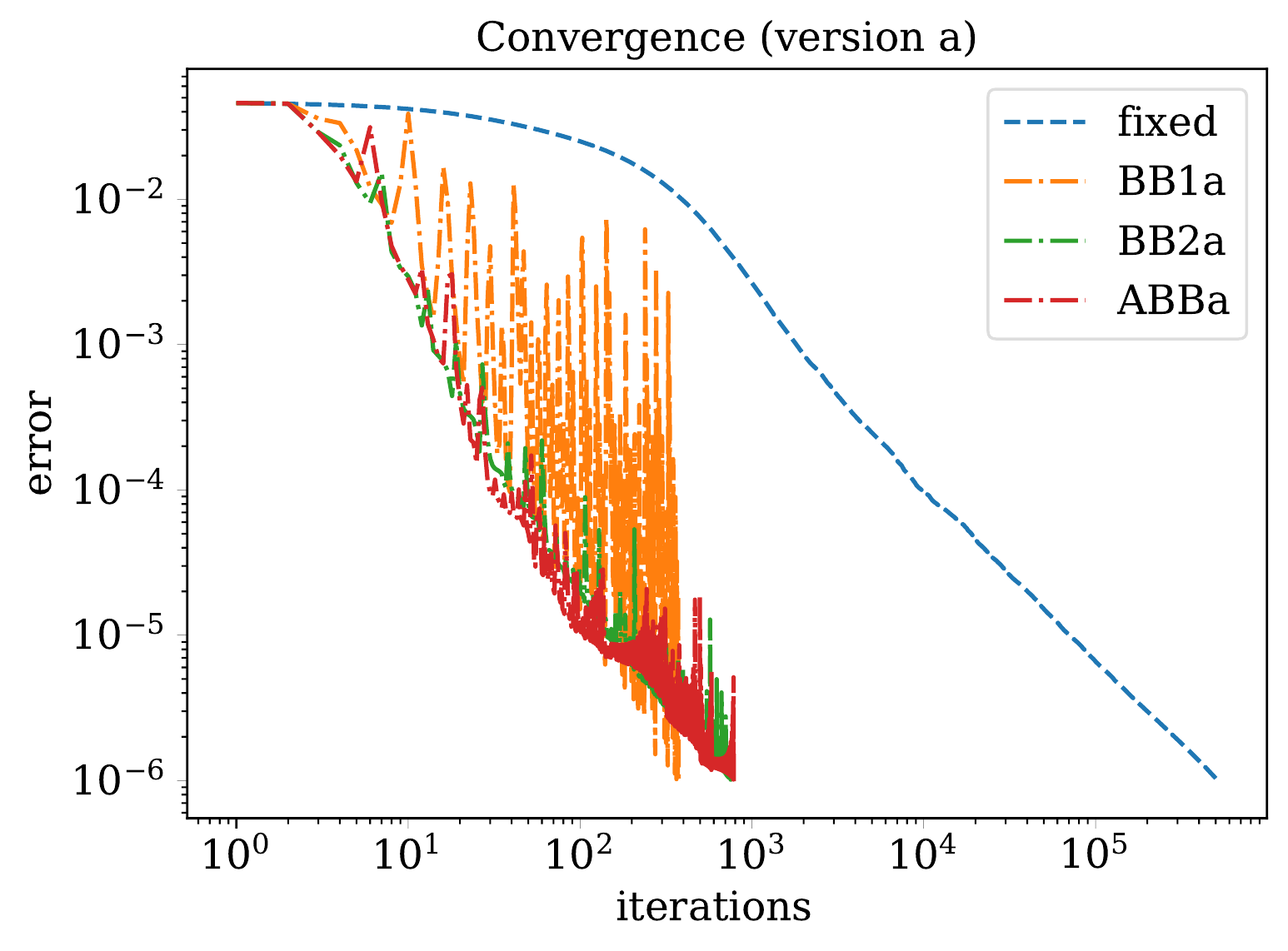}
	}
\subfigure
	{
		\includegraphics[height=3.5cm,width=4.5cm]{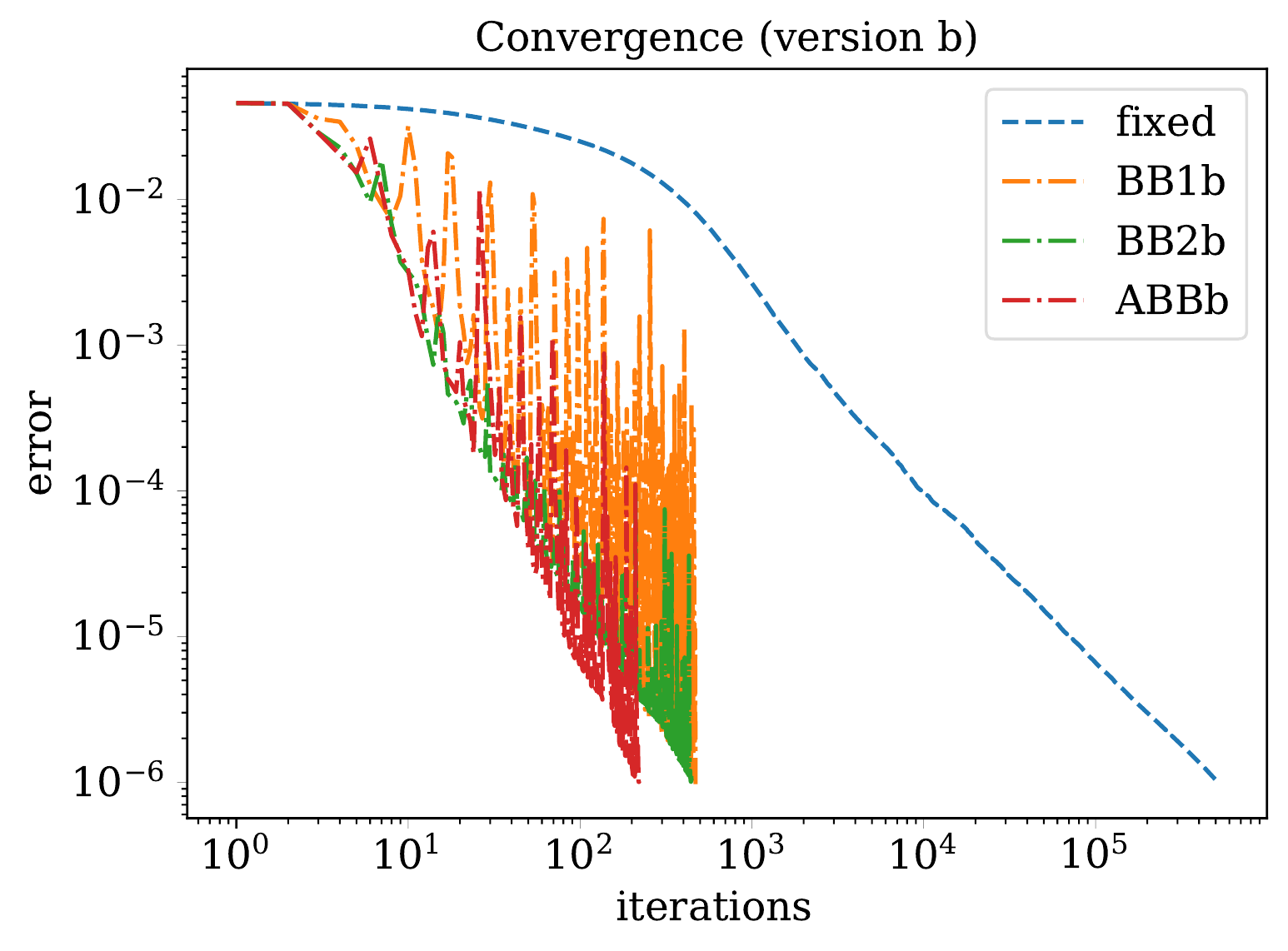}
	}
\subfigure
	{
		\includegraphics[height=3.5cm,width=4.5cm]{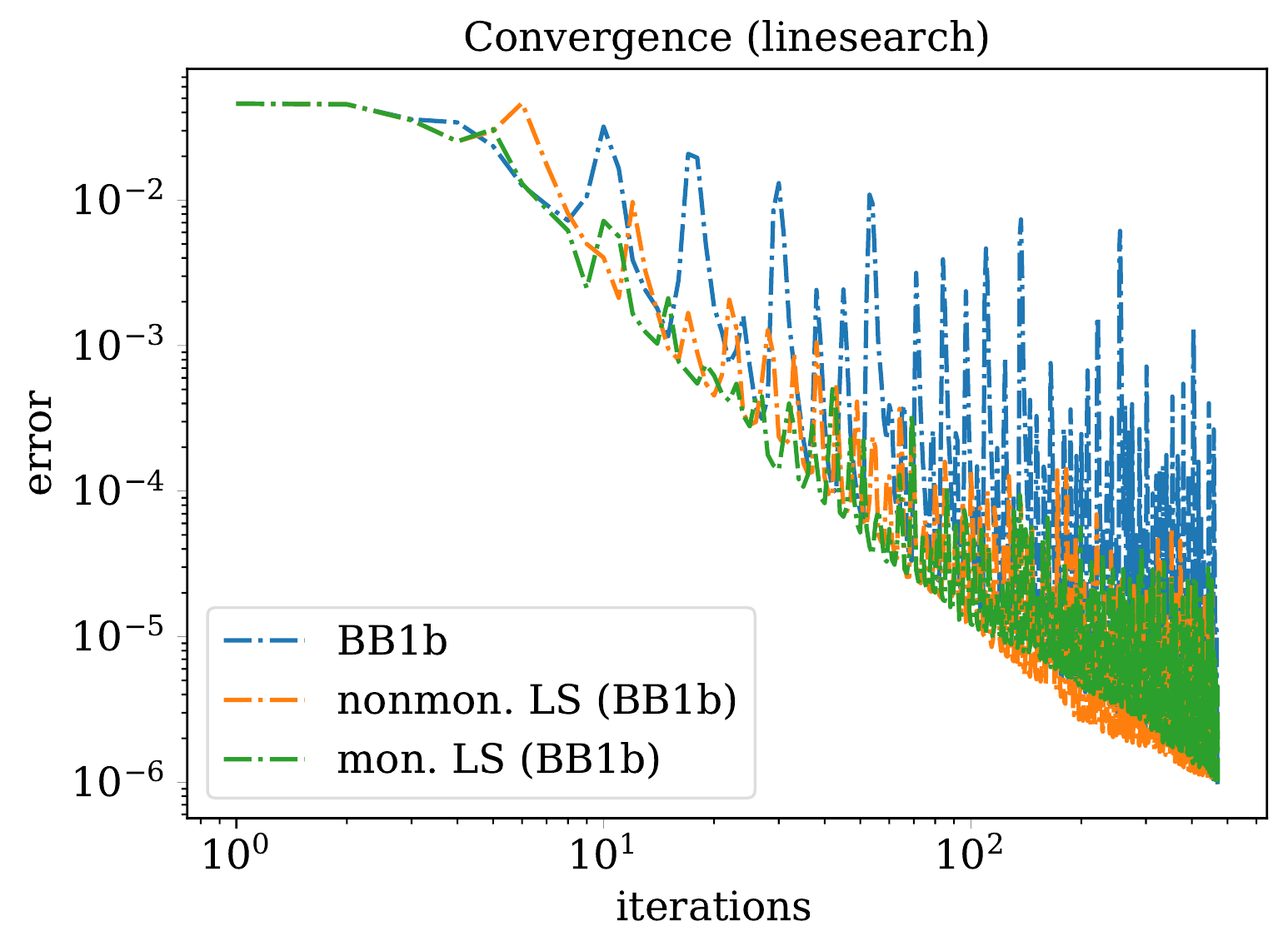}
	}
 \caption{Example \ref{sec:numerics:parabolic}:  convergence of Algorithm \ref{algo:NMLS}. "Error" refers to $\|\gmap_{\ssizek}(u_k)\|_\Hs$ at the current iterate.}
\label{fig:parabolic_results}
\end{figure}
\end{example}

To summarize,  all the numerical results from the above examples show the capabilities of Algorithm \ref{algo:NMLS} and the necessity for the use of a linesearch method.   From the above example,  we can conclude that the incorporation of nonmonotone linesearch and BB step-sizes leads to an efficient algorithm for a class of non-smooth PDE-constrained optimization problems.   Moreover, the convergence behavior is far better than what our worst-case complexity results suggest.  In particular,  when combined with BB methods, this is typical behavior,  see e.g.,  \cite{azmi_analysis_2020,MR967848}.  Whether the strong quadratic growth condition helps to  accelerate convergence in the elliptic example \eqref{eq:opt_problem_pde_e} towards the end is not entirely clear,  although the orange curves in Figures \ref{Fig:eliptic_line} and  \ref{fig:elliptic_noncon} suggest it.

\section*{Conclusion}
\label{sec:conclusion}
We have studied the nonmonotone FBS method for a class of infinite-dimensional composite problems in Hilbert spaces. Most prominently, we have established global convergence with complexity $(1 / \sqrt{k})$ and also provide a complexity analysis. Under additional convexity assumption, convergence is now sublinear of order $(1 / k)$ in function values. If additionally, a quadratic growth-type condition is satisfied, we have shown $R$-linear convergence, both in function values and iterates. Additional difficulties arising in the transition from weak to strong convergence in the infinite-dimensional setting have been discussed in detail.  Finally, the nonmonotone FBS and the novel BB step-size rules exploiting the nonsmooth part were successfully tested for elliptic and parabolic PDE-constrained problems.


\begin{acknowledgements}
The work of Marco Bernreuther was supported by the German Research Foundation (DFG) under grant number VO 1658/5-2
within the priority program ``Non-smooth and Complementarity-based Distributed Parameter
Systems: Simulation and Hierarchical Optimization'' (SPP 1962).
\end{acknowledgements}
\appendix
\section{Appendix}\label{Appendix}
\subsection{Proof of Theorem \ref{thm:complexity}}\label{appendix:complexity}
Here,  we restrict ourselves mainly to deriving  \eqref{eq:B19}.  Justification of \eqref{eq:B20} follows by similar arguments.  The proof is carried out by determining the minimum reduction of the objective function between iteration step $u_{k+1}$  and its maximal predecessor , i.e. $u_{\ell(k)}$,  with respect to \eqref{eq:ls_update} as long as Algorithm \ref{algo:NMLS} has not been terminated.  Note that $u_{\ell(k)}$ can be calculated without further evaluations of the cost function by storing previous iterates.  When using Algorithm \ref{algo:NMLS},  we have the following two cases:
\begin{itemize}
\item Assume that \eqref{eq:ls_ineq} in Step 4 of Algorithm \eqref{algo:NMLS} holds for $\ssizek := \ssizeinit $ and therefore $i_k = 0$.  Then we have a decrease 
\begin{equation*}
\cost(u_{\ell(k)})-\cost(u_{k+1}) \geq \frac{\delta}{\ssizeinit} \|\gmap_{\ssizek}(u_k)\|_\Hs^2 \geq  \frac{\delta}{\ssizeub} \|\gmap_{\ssizek}(u_k)\|_\Hs^2.  
\end{equation*} 
In this case,  only one additional function evaluation is necessary to obtain this decrease. 
\item Assume that \eqref{eq:ls_ineq} in Step 4 of Algorithm \ref{algo:NMLS} fails to hold for $\ssizeinit$ and therefore $i_k\geq 1$ backtracking steps are performed.  Due to \rom{3}  of Lemma \ref{lem:properties_of_iter}  we have 
\begin{equation*}
 \ssizek = \ssizeinit \eta^{i_k} < \frac{\eta \LipcostrD}{2(1-\delta)},
\end{equation*}
and,  as a consequence,  we can bound $i_k$ as follows
\begin{equation*}
i_k < \abs{\log_{\eta}\left(  \frac{\eta \LipcostrD}{2\ssizeinit (1-\delta)}   \right) }. 
\end{equation*} 
Hence,  Step 4 requires at most $n_1  \coloneqq  \floor{  \abs{\log_{\eta}\left(  \frac{\eta \LipcostrD}{2\ssizelb(1-\delta)}   \right)}} $ function evaluations.  At the time that the linesearch strategy terminates in this step,  we have the decrease
\begin{equation*}
\cost(u_{\ell(k)})-\cost(u_{k+1}) \geq \frac{\delta}{\ssizek} \|\gmap_{\ssizek}(u_k)\|_\Hs^2 >  \frac{2(1-\delta)\delta}{\eta \LipcostrD}\|\gmap_{\ssizek}(u_k)\|_\Hs^2.
\end{equation*} 
\end{itemize}
 Gathering the two above cases,  we can see that function value decrease per function evaluation is given,  in the worst-case,  by
\begin{equation*}
\cost(u_{\ell(k)})-\cost(u_{k+1}) \geq \decparam  \|\gmap_{\ssizek}(u_k)\|_\Hs^2.   
\end{equation*}
with  $\decparam \coloneqq \min \left\{  \frac{\delta}{\ssizeub} ,   \frac{2(1-\delta)\delta}{ n_1\eta \LipcostrD} \right\}$.  Further, using similar arguments as in the proof of Theorem  \ref{Thm:conv},  we obtain
\begin{equation}
\label{eq:B18}
\begin{split}
&\cost({u_0})-\costbar \geq \cost({u_0})-\cost(u_{k}) \geq \cost{(u_{\nu(0)})}-\cost(u_{\nu(\ceil{\frac{k}{\mmax+1}})})\\ 
& = \sum^{\ceil{\frac{k}{\mmax+1}}}_{i =1}  \cost(u_{\nu(i-1)}) -\cost(u_{\nu(i)}) \geq  \sum^{\ceil{\frac{k}{\mmax+1}}}_{i =1}   \decparam  \|\gmap_{\ssize_{\nu(i)-1}}(u_{\nu(i)-1)})\|_\Hs^2 \\
&\geq \decparam  \ceil{\frac{k}{\mmax+1}} \min_{1 \leq i  \leq  \ceil{\frac{k}{\mmax+1}}}\|\gmap_{\ssize_{\nu(i)-1}}(u_{\nu(i)-1})\|^2_\Hs    \\   
&\geq \left(\frac{\decparam  k}{\mmax+1}\right) \min_{0 \leq  i  \leq  \ceil{\frac{k}{\mmax+1}}(\mmax+1)-1}\|\gmap_{\ssize_{i}}(u_{i})\|^2_\Hs   \\   
&\geq \left(\frac{  \decparam k}{C^{2\mmax}_G(\mmax+1)}\right) \min_{0 \leq  i  \leq  k}\|\gmap_{\ssize_{i}}(u_{i})\|^2_\Hs.
\end{split}
\end{equation}
To find an $\tol$-stationary point, we assume that up to the current iteration $k$ Algorithm \ref{algo:NMLS} has not been terminated, i.e. $\|\gmap_{\ssize_{i}}(u_{i})\|_\Hs > \epsilon$ for all $i =0,\dots,k$.   In this case,  using  \eqref{eq:B18},  we can write 
\begin{equation*}
\begin{split}
&\cost({u_0})-\costbar > \left(\frac{ \decparam k}{C^{2\mmax}_G(\mmax+1)}\right)\tol^2,
\end{split}
\end{equation*}
and,  therefore,  the total number of function evaluations of $\cost$ in Algorithm \ref{algo:NMLS}
is bounded from above by
\begin{equation*}
\maxfeval \leq \floor{ \frac{ C^{2\mmax}_G (\mmax+1)(\cost({u_0})-\costbar)}{\decparam \tol^2} } = \floor{ \frac{\maxfparam (\cost({u_0})-\costbar)}{ \tol^2} }.
\end{equation*}
Thus,  we are finished with the verification  of  \eqref{eq:B19}.
Similarly,  using  \eqref{eq:B15},  it can be easily show that the total number of $\gmap_{\ssizek}(u_k)$ evaluations is bounded by
\begin{equation*}
\maxgeval \leq \floor{ \frac{  C^{2\mmax}_G  \ssizebar (\mmax+1)(\cost({u_0})-\costbar)}{\delta  \tol^2} } = \floor{ \frac{\maxgparam (\cost({u_0})-\costbar)}{ \tol^2} },
\end{equation*}
and,  thus,  the proof is complete. 

\subsection{Proof of Lemma \ref{lem:quadratic}}\label{appendix:quadratic}
First, we show  for every $k\geq 1$ that 
\begin{equation}
\label{eq:B22}
\begin{split}
\cost(u_{k})\leq  \min_{w \in \Hs} \mathcal{Q}_{\ssize_{k-1}}(w, u_{k-1}) + \frac{\LipcostrD}{2\ssizelb \ssize_{k-1}} \|\gmap_{\ssize_{k-1}}(u_{k-1})\|_\Hs^2. 
\end{split}
\end{equation}
Due to \ref{ass:general_3}, the descent lemma, see \cite[Lemma 1.30]{MR3310025}, implies 
\begin{equation*}
\costr( u_{k}) \leq \costr(u_{k-1}) + ( \costrD(u_k),u_k-u_{k-1})_\Hs + \frac{\LipcostrD}{2 \ssize^2_{k-1}} \|\gmap_{\ssize_{k-1}}(u_{k-1})\|_\Hs^2. 
\end{equation*}
Using the definition of $\mathcal{Q}_{\ssize_{k-1}}$  and the fact that $u_k$ is the minimizer of $\mathcal{Q}_{\ssize_{k-1}}(\cdot, u_{k-1})$,  we obtain
\begin{equation*}
\begin{split}
\cost(u_{k})& \leq  \min_{w \in \Hs} \mathcal{Q}_{\ssize_{k-1}}(w, u_{k-1}) + \left(\frac{\LipcostrD}{2 \ssize^2_{k-1}} -\frac{1}{2\ssize_{k-1}}\right)\|\gmap_{\ssize_{k-1}}(u_{k-1})\|_\Hs^2 \\
 &\leq  \min_{w \in \Hs} \mathcal{Q}_{\ssize_{k-1}}(w, u_{k-1}) + \frac{\LipcostrD}{2 \ssize^2_{k-1}} \|\gmap_{\ssize_{k-1}}(u_{k-1})\|_\Hs^2, 
\end{split}
\end{equation*}
and,  thus,   \eqref{eq:B22} follows from the fact that  $\ssizek \geq  \ssizelb$  for all $k \in \N_0$.

Due to the convexity of $\costr$, we can write 
\begin{equation}
\label{eq:B23}
\min_{w \in \Hs} \mathcal{Q}_{\ssize_{k-1}}(w, u_{k-1}) \leq \min_{w \in \Hs} \{\cost(w)+  \frac{\ssize_{k-1}}{2} \|w-u_{k-1} \|^2_\Hs \}. 
\end{equation}
Further,  due to  the convexity of  $\cost$, we obtain for every $w= (1-\lambda)u_{k-1}+\lambda u^* $  with  $\lambda  \in  [0,1] $ and $u^* \in \crit$ that 
\begin{equation*}
\begin{split}
 \min_{w \in \Hs} & \{  \cost(w)+  \frac{\ssize_{k-1}}{2} \|w-u_{k-1} \|^2_\Hs \}  \leq \cost((1-\lambda)u_{k-1}+\lambda u^* ) + \frac{\ssize_{k-1}\lambda^2}{2}  \| u_{k-1} - u^* \|^2_\Hs\\ 
&\leq (1-\lambda) \cost(u_{k-1})+\lambda \cost^*+ \frac{\ssize_{k-1}\lambda^2}{2}  \| u_{k-1} - u^* \|^2_\Hs.
\end{split}
\end{equation*}
Combining with \eqref{eq:B22} and \eqref{eq:B23}, we obtain 
\begin{equation}
\label{eq:B24}
\begin{split}
\cost(u_{k}) & \leq (1-\lambda) \cost(u_{k-1})+\lambda \cost^*+ \frac{\ssize_{k-1}\lambda^2}{2}  \| u_{k-1} - u^* \|^2_\Hs + \frac{\LipcostrD}{2\ssizelb \ssize_{k-1}}\|\gmap_{\ssize_{k-1}}(u_{k-1})\|_\Hs^2 \\ 
& \leq (1-\lambda) \cost(u_{\ell(k-1)})+\lambda \cost^*+ \frac{\ssize_{k-1}\lambda^2}{2}  \| u_{k-1} - u^* \|^2_\Hs + \frac{\LipcostrD}{2 \ssizelb\ssize_{k-1}}\|\gmap_{\ssize_{k-1}}(u_{k-1})\|_\Hs^2.
\end{split}
\end{equation}
Now,  inserting $\nu(k)$ in the place of $k$ in \eqref{eq:B24} and using \eqref{eq:B30},  we obtain
\begin{equation}
\label{eq:B25}
\begin{split}
\cost(u_{\nu(k)})&\leq (1-\lambda) \cost(u_{\nu(k-1)})+\lambda \cost^*+ \frac{\ssize_{\nu(k)-1}\lambda^2}{2}  \| u_{\nu(k)-1} - u^* \|^2_\Hs \\
& + \frac{\LipcostrD}{2\ssizelb \ssize_{\nu(k)-1}}\|\gmap_{\ssize_{\nu(k)-1}}(u_{\nu(k)-1})\|_\Hs^2 .
\end{split}
\end{equation}
Subtracting  $\cost^*$ from both sides of \eqref{eq:B25}, setting $\tilde{C} :=\frac{\LipcostrD}{2\ssizelb}$ and using \rom{3} of Lemma \ref{lem:properties_of_iter} we obtain
\begin{equation}
\label{eq:B43}
\begin{split}
\cost(u_{\nu(k)})-\cost^* &\leq (1-\lambda)\left(\cost(u_{\nu(k-1)})- \cost^* \right) +\frac{\ssizebar \lambda^2}{2}  \|u_{\nu(k)-1}-u^*\|^2_\Hs \\& + \frac{\tilde{C}}{\ssize_{\nu(k)-1}} \|\gmap_{\ssize_{\nu(k)-1}}(u_{\nu(k)-1})\|_\Hs^2.
\end{split}
\end{equation}
Finally,  \eqref{eq:B44} follows from the fact that $u^* \in \crit$ was arbitrary and  
$\crit$ is  non-empty,  closed,  and convex.

Now we deal with the verification of \eqref{eq:B31}.  Let an arbitrary $u^* \in \crit$ be given,  setting $k = 1$ and $\lambda = 1$ in \eqref{eq:B43},  we obtain
\begin{equation}
\label{eq:B32}
\begin{split}
\cost(u_{\nu(1)})-\cost^*  \leq \frac{\ssizebar}{2} \| u_{\nu(1)-1} - u^* \|^2_\Hs + \frac{\tilde{C}}{ \ssize_{\nu(1)-1}} \|\gmap_{\ssize_{\nu(1)-1}}(u_{\nu(1)-1})\|_\Hs^2.
\end{split}
\end{equation}
Using, the fact that  $ 0\leq \nu(1)-1 \leq \mmax$ (see \ref{L1}),   the firm non-expansiveness of the proximal operator,   and the Lipschitz continuity of $\costrD$,  we can write that 
\begin{equation}
\label{eq:B76}
\begin{split}
& \| u_{\nu(1)-1} - u^* \|_\Hs  \leq   \left(1+ \frac{\LipcostrD}{\ssize_{\nu(1)-2}} \right)  \| u_{\nu(1)-2} - u^* \|_\Hs \\& \leq  \left(1+ \frac{\LipcostrD}{\ssizelb}  \right) \| u_{\nu(1)-2} - u^* \|_\Hs \leq \cdots \leq  \left(1+ \frac{\LipcostrD}{\ssizelb}  \right)^{\mmax} \| u_{0} - u^* \|_\Hs.
\end{split}
\end{equation}
Further,  using  \rom{4} of  Lemma \ref{lem:properties_of_iter} successively,  we  obtain that 
\begin{equation}
\label{eq:B77}
 \|\gmap_{\ssize_{\nu(1)-1}}(u_{\nu(1)-1})\|_\Hs \leq C^{\mmax}_G  \|\gmap_{\ssize_{0}}(u_{0})\|_\Hs .
\end{equation}
Combining \eqref{eq:B32}, \eqref{eq:B76},  and \eqref{eq:B77},  we can write 
\begin{equation}
\label{eq:B78}
\begin{split}
\cost(u_{\nu(1)})-\cost^* & \leq \frac{\ssizebar}{2} \left(1+ \frac{\LipcostrD}{\ssizelb}  \right)^{2\mmax} \| u_{0} - u^* \|^2_\Hs + \frac{\tilde{C}C^{2\mmax}_G}{\ssizelb}  \|\gmap_{\ssize_{0}}(u_{0})\|_\Hs^2 \\ & \leq \frac{\ssizebar}{2}  \left(1+ \frac{\LipcostrD}{\ssizelb}  \right)^{2\mmax} \| u_{0} - u^* \|^2_\Hs + \frac{\ssizebar^2 \tilde{C}C^{2\mmax}_G}{ \ssizelb} \| u_{1}-u_0 \|_\Hs^2.
\end{split}
\end{equation}
Further,   the firm non-expansiveness of the proximal operator and the Lipschitz continuity of $\costrD$ again imply that
\begin{equation}
\label{eq:B37}
\begin{split}
\| u_1 - u_0 \|_\Hs & \leq \| u_0 - u^* \|_\Hs + \| u_1 - u^*\|_\Hs\\
& \leq 2 \| u_0 - u^* \|_\Hs + \frac{\LipcostrD}{\ssize_0} \|u_0 - u^* \|_\Hs \leq \big(2 + \frac{\LipcostrD}{\ssizelb}\big) \|u_0 - u^* \|_\Hs.
\end{split}
\end{equation}
Thus,  combining  \eqref{eq:B78} and  \eqref{eq:B37} and setting 
\begin{equation*}
 C_0 \coloneqq  \frac{\ssizebar}{2}  \left(1+ \frac{\LipcostrD}{\ssizelb}  \right)^{2\mmax} + \frac{\ssizebar^2 \tilde{C}C^{2\mmax}_G}{ \ssizelb} \big(2 + \frac{\LipcostrD}{\ssizelb}\big)^2,
 \end{equation*}
 we arrive at 
\begin{equation*}
\cost(u_{\nu(1)})-\cost^* \leq C_0 \|u_0-u^*\|^2_{\Hs}.
\end{equation*}
Hence,  \eqref{eq:B31} follows from the fact that $u^* \in \crit$ is arbitrary. 

\bibliographystyle{spmpsci}
\bibliography{nonmon_FBS_Azm_Ber}

\end{document}